\documentclass[11pt]{article}
\usepackage{graphicx}
\usepackage{amsmath,amssymb,amsthm,amsfonts}
\usepackage{amssymb}
\usepackage{cite}
\usepackage{subfig}
\usepackage{amsmath}

\usepackage[hidelinks]{hyperref}

\allowdisplaybreaks[4]
\newtheorem{thm}{Theorem}[section]

\newtheorem{lem}[thm]{Lemma}
\newtheorem{prop}[thm]{Proposition}

\theoremstyle{definition}

\theoremstyle{remark}
\newtheorem{rem}{Remark}[section]

\numberwithin{equation}{section}

\DeclareMathSymbol{\C}{\mathalpha}{AMSb}{"43}

\makeatletter
\renewenvironment{proof}[1][\proofname]{\par
  \pushQED{\qed}%
  \normalfont\topsep6\p@\@plus6\p@\relax
  \trivlist
  \item[\hskip\labelsep\normalfont\bfseries #1\@addpunct{.}]\ignorespaces
}{%
  \popQED\endtrivlist\@endpefalse
}
\makeatother

\newcommand{\R}{\mathbb{R}}
\newcommand{\norm}[1]{\left\lVert #1\right\rVert}
\newcommand{\abs}[1]{\left\lvert #1\right\rvert}
\newcommand{\Hrad}{H^1_{O(N)}(\R^N)}
\newcommand{\Hmrad}{H^{-1}_{O(N)}(\R^N)}

\begin{document}

\title{Uniqueness of Bound States for Double-Power Scalar Field Equations}

\author{ Yujin Guo\thanks{School of Mathematics and Statistics, and  Key Laboratory of Nonlinear Analysis $\&$ Applications (Ministry of Education), Central China Normal University, P.O. Box 71010, Wuhan 430079, P. R. China. Email: \texttt{yguo@ccnu.edu.cn}. Y. J. Guo
is partially supported by NSF of China (Grants 12225106 and 12371113) and National Key R $\&$ D Program of China (Grant 2023YFA1010001).}
,\, 	Wei Ji\thanks{School of Mathematics and Statistics, and  Key Laboratory of Nonlinear Analysis $\&$ Applications 
(Ministry of Education), Central China Normal University, P.O. Box 71010, Wuhan 430079, P. R. China. 
Email: \texttt{jiwei2020@amss.ac.cn}. }
,\, and\, 	Yong Luo\thanks{School of Mathematics and Statistics,  and  Hubei Key Laboratory of
Mathematical Sciences, Central China Normal University, P.O. Box 71010, Wuhan 430079, P. R. China. Email: \texttt{yluo@ccnu.edu.cn}.
Y. Luo is partially supported by NSF of China (Grant 12571119).}
}
\date{}
\smallbreak\maketitle

\begin{abstract}
We study the uniqueness of  bound states for the following scalar field equation
\[
 -\Delta u+u=-\varepsilon |u|^{q-1}u+|u|^{p-1}u
\,\ \ \mbox{in}\,\ \mathbb R^N,
 \,\ \varepsilon>0,\eqno{(*)}
\]
where \(  N\ge3,  1<p<(N+2)/(N-2)
\) and \(  q>1 \) satisfies $q \neq p$. For each integer $k \geq 1$, we prove that
the equation $(*)$ has, up to translation and reflection, a unique bound state with precisely $k$ nodes in a natural coupling
regime, where $\varepsilon>0$ is sufficiently small. Our results address a conjecture of
[H. Berestycki and P. L. Lions, ARMA, 1983][M. X. Tang, Invent. Math., 2026] on the uniqueness of fixed-node bound states for the equation $(*)$.
\end{abstract}

\vskip 0.05truein

\noindent {\it Keywords:} Double-power scalar field equation; Bound states; Uniqueness

\vskip 0.2truein

\section{Introduction}
\label{sec-main}
In this paper, we study bound states
of the following double-power scalar field equation
\begin{equation}\label{mainequation}
 -\Delta u+u=-\varepsilon |u|^{q-1}u+|u|^{p-1}u
 \,\ \ \text{in}\,\ \mathbb R^N,  \ \
 u(x)\in H^1(\mathbb R^N),
 \,\ \varepsilon>0,
\end{equation}
where
\begin{equation}\label{eq-range}
 N\ge3,
 \ \
 1<p<\frac{N+2}{N-2},
 \ \
 q>1,
 \ \
 q\ne p.
\end{equation}
Equation \eqref{mainequation} arises from standing waves of the nonlinear
Schr\"odinger equation
\begin{equation}\label{eq-time-dependent-NLS}
 i\partial_t\psi+\Delta\psi+|\psi|^{p-1}\psi
 -\varepsilon|\psi|^{q-1}\psi=0\,\ \ \text{in}\,\ (0, +\infty)\times\mathbb R^N,
\end{equation}
where \(\psi(t,x)=e^{it}u(x)\), and two nonlinear terms
represent competitive focusing and defocusing effects.
Such equations are used to model the nucleations, dispersive media and dilute quantum gases, see
\cite{Coleman1977,DesyatnikovMaimistovMalomed2000,gross1961structure,MorozMuratov2014,pitaevskii1961vortex}
and the references therein.
In particular, the cubic--quintic case \(p=3\) and \(q=5\) of \eqref{mainequation} appears in the
systems of attractive two-body and repulsive three-body interactions
\cite{Anderson1971,Kartavenko1984,MerchantIsidroFilho1988},
spatiotemporal optical solitons
\cite{DesyatnikovMaimistovMalomed2000}, and trapped
Bose--Einstein condensates
\cite{GammalFredericoTomioChomaz2000}.
Moreover, the case $p=5/3$, $q=7/3$ and $N=3$ of \eqref{mainequation} 
was considered (cf. \cite{Ricaud2018}) in the periodic Thomas--Fermi--Dirac--von Weizs\"acker theory.

The existence theory of solutions for \eqref{mainequation} is related to that of the
following more general scalar field equation
\begin{equation}\label{eq:general-scalar-field}
 \Delta u+f(u)=0
\,\ \ \text{in }\ \mathbb R^N,
\ \ \text{where} \ \
 u(x)\in H^1(\mathbb R^N).
\end{equation}
When $N\ge3$, Berestycki and Lions
\cite{berestycki1983nonlinearI,berestycki1983nonlinearII} considered
nontrivial solutions of \eqref{eq:general-scalar-field} under the assumptions that
$f:\mathbb R\to\mathbb R$ is continuous, odd and satisfies
\begin{enumerate}
 \item[(H1)]
 \[
  -\infty<\liminf_{s\to0}\frac{f(s)}{s}
  \leq\limsup_{s\to0}\frac{f(s)}{s}=-m<0,
 \]
 \item[(H2)]
 \[
  -\infty\leq
  \limsup_{s\to+\infty}\frac{f(s)}{s^{\frac{N+2}{N-2}}}
  \leq0,
 \]
 \item[(H3)] there exists $\xi>0$ such that
 \[
  F(\xi):=\int_0^\xi f(s)\,ds>0.
 \]
\end{enumerate}
Under the assumptions (H1)--(H3), the authors proved in \cite{berestycki1983nonlinearI,berestycki1983nonlinearII} that
\eqref{eq:general-scalar-field} admits infinitely many distinct
radial solutions, at least one of which is positive, and all of
which decay exponentially as $|x|\to+\infty$. We also refer
 to \cite{Strauss1977,BerestyckiLionsPeletier1981} for further results on the existence of nontrivial
radial solutions for \eqref{eq:general-scalar-field}.
One can note that  the nonlinearity
\begin{equation}
 \label{1:f}
 f(s)=-s-\varepsilon\abs{s}^{q-1}s+\abs{s}^{p-1}s
 \end{equation}
satisfies (H1) and (H2) under \eqref{eq-range}
for any $\varepsilon>0$, while it satisfies (H3) if and only if
$0<\varepsilon<\varepsilon_*$, where
\begin{equation}
 \label{eq-existence-threshold}
 \varepsilon_*:=
 \begin{cases}
  +\infty, & 1<q<p;\\
  \displaystyle
  \frac{(p-1)(q+1)}{(p+1)(q-1)}
  \left[
   \frac{2(q-p)}{(p+1)(q-1)}
  \right]^{\frac{q-p}{p-1}}>0,
  & q>p>1.
 \end{cases}
\end{equation}
Applying \cite[Theorem~1 and Example~2]{berestycki1983nonlinearI}, it thus follows from
\cite{berestycki1983nonlinearII} that
\eqref{mainequation} admits infinitely many distinct
nontrivial radial solutions
for any $\varepsilon\in(0,\varepsilon_*)$,
at least one of which is positive in $\mathbb R^N$.

The uniqueness of positive solutions for \eqref{eq:general-scalar-field} was studied in
\cite{coffman1972uniqueness,mcleod1987uniqueness,kwong1989uniqueness} for the pure-power nonlinearity
$f(u)=-u+\abs{u}^{p-1}u$, where $N\ge3$ and $1<p<\frac{N+2}{N-2}$.
The uniqueness of positive
solutions for \eqref{eq:general-scalar-field} under other assumptions on $f(u)$ was further analyzed in
\cite{cortazar1998uniqueness,kabeya1999uniqueness,
kwong1992uniqueness,McLeod1993, serrin2000uniqueness,shioji2013generalized} and the references therein.
Specially for $N\ge3$, Serrin and Tang
\cite{serrin2000uniqueness} proved the uniqueness,
up to translation, of positive
$H^1(\mathbb R^N)$ solutions for the following
double-power scalar field equation
\begin{equation}
 \label{eq-positive-double-power}
 -\Delta v+\omega v=v^p-v^q \,
 \,\ \ \text{in}\ \, \mathbb R^N,\ \ v(x)\in H^1(\mathbb R^N),
\end{equation}
where $1<p<q$ and
$0<\omega<\omega_*:=
\varepsilon_*^{\frac{p-1}{q-p}}$.
We should comment that equation \eqref{eq-positive-double-power}
can be equivalently reduced into the form \eqref{mainequation} by scaling,
where $\varepsilon=\omega^{\frac{q-p}{p-1}}>0$. When $N\ge2$, Lewin and Rota  Nodari
\cite{Lewin2020CV} further established the uniqueness
and  nondegeneracy of positive $H^1(\mathbb R^N)$ solutions for
\eqref{eq-positive-double-power}. The existing analytical results also show  that  
these positive solutions of \eqref{eq-positive-double-power} depend on $\omega$. Specially for $N\ge3$, Moroz and Muratov
\cite{MorozMuratov2014} studied the asymptotic behavior of positive solutions for
\eqref{eq-positive-double-power} as $\omega\searrow0$.
When $N\ge2$, Lewin and Rota Nodari \cite{Lewin2020CV}
further analyzed the $L^2$-mass and asymptotic
behavior  of positive solutions for \eqref{eq-positive-double-power}
as $\omega\searrow0$. We remark that a positive solution of \eqref{eq:general-scalar-field}  is often called
a ground state of \eqref{eq:general-scalar-field}.

It follows from \cite{CohenLiSchlag2024,Rosen1952,tang2026uniqueness} that
{\em a sign-changing radial solution} of \eqref{eq:general-scalar-field} in
$H^1(\mathbb R^N)\cap L^\infty(\mathbb R^N)$ is often called
{\em a bound  state (or  excited state)}   of \eqref{eq:general-scalar-field}.
Even though Berestycki--Lions theorem in
\cite{berestycki1983nonlinearI,berestycki1983nonlinearII}
gives infinitely many nontrivial radial solutions
of \eqref{eq:general-scalar-field},
these works  do not  imply in  a direct way that
for each integer $k\ge1$,
\eqref{eq:general-scalar-field} admits a bound state
with precisely $k$ nodes.
For this reason, the authors in
\cite{JonesKupper1986,McLeodTroyWeissler1990,Grillakis1990}
proved the existence of bound states with precisely
$k$ nodes for the following pure-power nonlinear equation
\begin{equation}
 \label{eq-limit}
 -\Delta Q+Q=\abs{Q}^{p-1}Q
 \ \,\ \text{in}\ \, \mathbb R^N, \ \ Q(x)\in H^1(\mathbb R^N),
\end{equation}
where $N\ge3$ and $1<p<\frac{N+2}{N-2}$.
As for the double-power equation \eqref{mainequation} in the range \eqref{eq-range}, it was recently proved in
\cite[Theorem~1.1]{CortazarGarciaHuidobroYarur2013} that  for each integer \(k\geq1\) and any
\(\varepsilon\in(0,\varepsilon_*)\), \eqref{mainequation} admits a bound state with precisely
\(k\) nodes, where $\varepsilon_*>0$ is defined by
\eqref{eq-existence-threshold}.

The uniqueness and multiplicity of bound states with precisely \(k\) nodes for \eqref{eq:general-scalar-field} 
have attracted a lot of attention since the past few decades. More precisely, under suitable assumptions on $f(u)$,
Cort\'azar, Garc\'ia-Huidobro and Herreros  proved in \cite{Herreros2015} that
for each integer $k\ge1$, \eqref{eq:general-scalar-field} admits at least two
distinct bound states  with precisely $k$ nodes.
On the other hand, under different assumptions on $f(u)$,
Cort\'azar, Garc\'ia-Huidobro and Yarur analyzed in
\cite{CortazarGarciaHuidobroYarur2011} that for any $2\le N\le4$ and each integer $k\ge1$,
\eqref{eq:general-scalar-field} admits at most one bound state with precisely $k$ nodes.
Moreover, for the pure-power nonlinearity
$f(u)=-u+\abs{u}^{p-1}u$, Ao, Wei and Yao
proved in \cite{Wei2016} that for any $N\ge3$
and each fixed integer $k\ge1$,
\eqref{eq:general-scalar-field} admits a unique
bound state with precisely $k$ nodes,
if $p<\frac{N+2}{N-2}$ is sufficiently close to
$\frac{N+2}{N-2}$.
  Cohen, Li and Schlag
recently employed in \cite{CohenLiSchlag2024} a rigorous computer-assisted argument
to prove that for each integer $k$ satisfying
$1\le k\le20$, the equation
\begin{equation}
 \label{eq:cubic-excited-states}
 -\Delta u+u=u^3
 \,\ \ \text{in}\ \ \mathbb R^3
\end{equation}
admits a unique bound state with precisely $k$ nodes.


More generally, Berestycki and Lions
\cite{berestycki1983nonlinearI,berestycki1983nonlinearII}
conjectured in 1983 that for certain classes of
nonlinearities $f(u)$,
\eqref{eq:general-scalar-field} admits a unique (up to translation and reflection)
bound state with precisely $k$ nodes, where the integer $k\ge1$ is arbitrary.
Tang recently proved in \cite{tang2026uniqueness} that
for any $N\ge3$ and $1<p<\frac{N+2}{N-2}$,
\eqref{eq-limit} admits a unique (up to translation and reflection)
bound state with  precisely $k$ nodes, where the integer $k\ge1$ is arbitrary.
This resolves the above mentioned Berestycki--Lions conjecture (cf. \cite{berestycki1983nonlinearI,berestycki1983nonlinearII})
for the pure-power equation \eqref{eq-limit}.
At the same time, Tang also conjectured in \cite{tang2026uniqueness} that
for $N\ge2$ and each integer $k\ge1$,
\eqref{eq:general-scalar-field} admits at most one (up to translation and reflection)
bound state with  precisely $k$ nodes,
when the nonlinearity $f(\cdot)$ is given by
\begin{equation}
 \label{eq:tang-double-power}
 f(u)=-u-\lambda\abs{u}^{q-1}u+\mu\abs{u}^{p-1}u,
 \ \ \lambda,\mu>0,
\end{equation}
where $1<p,q<2^*_N:=\frac{N+2}{N-2}$ satisfy $p\ne q$, and we define $2^*_2:=+\infty$.
One can observe that if $u(x)$ is a solution of
\eqref{eq:general-scalar-field} satisfying \eqref{eq:tang-double-power},
then $\mu^{\frac1{p-1}}u(x)$ is a solution of
\eqref{mainequation}, where
\[
 \varepsilon :=\lambda\mu^{-\frac{q-1}{p-1}}>0.
\]
Therefore, the analysis of bound states for \eqref{mainequation} gives equivalently the analytical results of bound states for \eqref{eq:general-scalar-field} satisfying \eqref{eq:tang-double-power}.

Stimulated by the above conjectures of \cite{berestycki1983nonlinearI,berestycki1983nonlinearII,tang2026uniqueness}, the main purpose of the present paper is to prove the   uniqueness (up to translation and reflection) of $k$-node bound states   for \eqref{mainequation} satisfying (\ref{eq-range}),  where $\varepsilon>0$ is sufficiently small. More precisely, by analyzing the limiting behavior 
of $k$-node bound states for \eqref{mainequation} satisfying (\ref{eq-range}) as $\varepsilon \searrow 0$, the main result of the present paper is to establish the following uniqueness theorem.

\begin{thm} \label{maintheorem}
Suppose $p, q>1$ satisfy the assumption (\ref{eq-range}). Then for each integer $k \geq 1$, there 
exists  $\varepsilon_k=\varepsilon (k)>0$ such that  for any $0<\varepsilon<\varepsilon_k$,  \eqref{mainequation} 
has, up to translation and reflection, a unique bound state with precisely $k$ nodes.
\end{thm}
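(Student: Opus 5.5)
The plan is a perturbation argument around the limiting equation \eqref{eq-limit}, combining Tang's uniqueness theorem \cite{tang2026uniqueness} with nondegeneracy of the $k$-node bound state in the radial class and a local uniqueness (implicit function) argument. Existence for every small $\varepsilon$ is already provided by \cite[Theorem~1.1]{CortazarGarciaHuidobroYarur2013}, so only uniqueness is at stake. Let $Q_k$ be the unique radial $k$-node solution of \eqref{eq-limit} given by \cite{tang2026uniqueness}. First I reduce to radial solutions: a bound state is by definition radial, up to translation. Reflection $u\mapsto -u$ is also a symmetry, so I normalize $u(0)>0$.

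\textbf{Step 1 (uniform a priori bounds and compactness).} Let $u_\varepsilon$ be any $k$-node radial bound state of \eqref{mainequation} with $u_\varepsilon(0)>0$ and $\varepsilon\to0$. I would show that $\|u_\varepsilon\|_{L^\infty}$ is bounded. If instead $\|u_\varepsilon\|_\infty\to\infty$, I rescale around the maximum point. For $q<p$ the blow-up limit is a nontrivial bounded solution of $-\Delta w=|w|^{p-1}w$ with at most $k$ nodes, which contradicts the Liouville theorems available for subcritical $p$ (Gidas--Spruck, and Bahri--Lions type results for finite Morse index or finitely many nodes in the radial setting). For $q>p$ the term $\varepsilon|u|^{q-1}u$ could dominate, so I would use the Pohozaev identity together with the energy identity and a Sturm comparison to bound the amplitude. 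In either regime, radial ODE estimates then give uniform exponential decay $|u_\varepsilon(r)|\le Ce^{-cr}$. This decay yields $H^1$ compactness of the radial sequence, and one also needs a lower bound $\|u_\varepsilon\|_\infty\ge c>0$, which holds because $f(s)\le -s/2$ for small $|s|$. Passing to the limit, $u_\varepsilon\to u_0$ in $H^1\cap L^\infty$, where $u_0$ is a nontrivial radial solution of \eqref{eq-limit}. The delicate point is that $u_0$ has exactly $k$ nodes. Nodes cannot escape to infinity because of the uniform exponential tail and ODE comparison on each nodal annulus. Nodes cannot merge either, since each nodal domain carries energy at least a fixed constant by the Sobolev inequality and the fact that the linear part dominates near $0$. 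Also, the zeros of $u_0$ are simple by ODE uniqueness. Tang's theorem then gives $u_0=Q_k$.

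\textbf{Step 2 (radial nondegeneracy of $Q_k$).} I need the linearized operator $L_k=-\Delta+1-p|Q_k|^{p-1}$ to have trivial kernel in $\Hrad$. I expect this to be part of, or to follow from, Tang's proof \cite{tang2026uniqueness}, which argues via continuation and a nondegeneracy property along the solution curve. If it is not directly stated there, I would derive it from the ODE: a radial kernel element $\phi$ with $\phi(0)=1$ would give a decaying solution of the linearized initial value problem. A Wronskian/Sturm argument against $rQ_k'+\frac{2}{p-1}Q_k$, which solves $L_k\psi=-2Q_k$, then yields a contradiction. I expect this step to be the main obstacle if it must be proved independently, since nondegeneracy for excited states is exactly the hard content of the uniqueness problem.

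\textbf{Step 3 (local uniqueness).} Suppose $u_\varepsilon^1\neq u_\varepsilon^2$ are two $k$-node radial solutions with $\varepsilon\to0$. By Step 1 both converge to $Q_k$. Set $\phi_\varepsilon=(u^1_\varepsilon-u^2_\varepsilon)/\|u^1_\varepsilon-u^2_\varepsilon\|_{L^\infty}$. This function satisfies
\[
-\Delta\phi_\varepsilon+\phi_\varepsilon=\big(c^p_\varepsilon(x)-\varepsilon c^q_\varepsilon(x)\big)\phi_\varepsilon ,
\]
where $c^s_\varepsilon=s\int_0^1|tu^1_\varepsilon+(1-t)u^2_\varepsilon|^{s-1}dt$. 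Since $\varepsilon c^q_\varepsilon\to0$ uniformly by the $L^\infty$ bounds, and the uniform exponential decay prevents $\phi_\varepsilon$ from losing its maximum at infinity, $\phi_\varepsilon$ converges to a nonzero radial kernel element of $L_k$. This contradicts Step 2. An equivalent route is to apply the implicit function theorem to $G(u,\varepsilon)=u-(-\Delta+1)^{-1}\big(|u|^{p-1}u-\varepsilon|u|^{q-1}u\big)$ on $\Hrad\cap L^\infty$ near $(Q_k,0)$. For $q$ supercritical, that map requires working in $L^\infty$-type spaces; the bounds of Step 1 make this legitimate. A compactness contradiction over sequences $\varepsilon_n\to0$ then produces the threshold $\varepsilon_k$.
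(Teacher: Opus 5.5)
Your overall architecture matches the paper's. Both arguments run: a priori bounds and compactness, identification of the limit with $Q_k$ through Tang's uniqueness, radial nondegeneracy of $Q_k$ from Tang's shooting analysis, then local uniqueness. For the nondegeneracy, the paper simply cites that the shooting derivative $Z_k$ satisfies $\abs{Z_k(r)}\to+\infty$. Your normalized-difference version of Step 3 is an acceptable substitute for the paper's $H^1$ inverse estimate plus contraction. The gaps are in Step 1, which is where the paper does its real work.

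\textbf{Escape of nodes.} Your argument here is circular. You assert that ``radial ODE estimates'' give $\abs{u_\varepsilon(r)}\le Ce^{-cr}$ uniformly in $\varepsilon$, and then use this tail to stop nodes from escaping. But a uniform tail outside a fixed ball is \emph{equivalent} to the nodes staying bounded. Suppose a zero $R_n$ of $u_n$ tends to $+\infty$. The next nodal interval contains a critical point $r_I>R_n$ with $F_{\varepsilon_n}(u_n(r_I))=H_{\varepsilon_n}(r_I)>0$, so $\abs{u_n(r_I)}>\beta_{\varepsilon_n}>\beta_0$, and no uniform bound $Ce^{-cr}$ can hold.

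What is missing is an estimate that detects a node far out. The paper uses the radial Pohozaev function $\Pi(r)=r^NH_\varepsilon(r)+\frac{N-2}{2}r^{N-1}u(r)u'(r)$. Its derivative $r^{N-1}\mathcal D_\varepsilon(u)$ is $\le0$ wherever $\abs{u}\le\eta$. First identify the local limit as some decaying $Q_m$ with $m<k$; the paper does this via Tang's classification plus $H_{0,V}\ge0$. Then choose a fixed $L$ where $Q_m$ is already small with $\abs{Q_m}$ decreasing. $\Pi$ is nonincreasing from $L$ up to the first $T_n>R_n$ with $\abs{u_n(T_n)}=\eta_0$, so $\Pi(T_n)\le C_L$. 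On the other hand, $H>0$ and $F_\varepsilon(\eta_0)\le-c_2$ force $u_nu_n'(T_n)\ge\eta_0\sqrt{2c_2}$, so $\Pi(T_n)\ge c\,T_n^{N-1}\to\infty$. Only after this are the maximum-principle tail bounds, the $H^1$ convergence, and your Steps 2--3 available.

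\textbf{Amplitude bound for $q>p$.} Here you only list tools (Pohozaev, energy, Sturm comparison), and you miss the one genuinely new regime. Rescale $w_n(r)=\alpha_n^{-1}u_n(\alpha_n^{-(p-1)/2}r)$ and set $\tau_n=\varepsilon_n\alpha_n^{q-p}$. The fact $u(0)<b_\varepsilon$, i.e.\ $f_{\varepsilon_n}(\alpha_n)>0$, gives $\tau_n<1$, so the $q$-term never dominates at the maximum.

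If $\tau_n\to\tau<1$ (which covers all of $q<p$), a Pohozaev oscillation argument on the limit, using $\abs{w}\le1$, gives infinitely many zeros. If $\tau_n\to1$, however, the limit is the constant $w\equiv1$. It carries no nodal information, so neither a Liouville theorem nor any identity applied to the limit yields a contradiction.

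The paper instead works with $w_n$ directly:
\begin{itemize}
\item The radius $R_n$ where $w_n$ first reaches a fixed $\vartheta\in(0,1)$ tends to $+\infty$, while the energy satisfies $E_n(R_n)\ge c_1>0$.
\item Since $\abs{w_n'}\le\sqrt{2/(p+1)}$, and $w_n$ is monotone on each of the $k+1$ pieces cut by the critical points after $R_n$, the dissipation $(N-1)\int\abs{w_n'}^2r^{-1}\,dr$ over each piece is at most $\frac{2(N-1)}{R_n}\sqrt{2/(p+1)}$.
\item Since $E_n(r)\to0$ at infinity, this forces $E_n(R_n)=O(R_n^{-1})\to0$, a contradiction.
\end{itemize}
Your proposal contains nothing that handles this plateau regime.
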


\begin{rem}
The equation investigated in Theorem \ref{maintheorem} is essentially equivalent to the form (\ref{eq:general-scalar-field}) 
satisfying (H1)--(H3) in a natural coupling regime (\ref{eq-existence-threshold}). The uniqueness of  Theorem \ref{maintheorem} 
therefore addresses the conjectures of Berestycki $\&$ Lions in \cite{berestycki1983nonlinearI,berestycki1983nonlinearII} and Tang in \cite{tang2026uniqueness}, which are concerned with the uniqueness of fixed-node bound states for \eqref{mainequation} in a natural coupling regime.
Of course, the conjecture of \cite{tang2026uniqueness}  requires additionally that $1< q<2^*_N:=\frac{N+2}{N-2}$. 
Therefore, we note from (\ref{eq-range}) and  \eqref{eq:tang-double-power} that  
the uniqueness of Theorem \ref{maintheorem}  holds for a wider range of $q>1$, 
compared with Tang's conjecture in \cite{tang2026uniqueness}.
\end{rem}

We point out that the existence of Theorem \ref{maintheorem} is due to the work \cite{CortazarGarciaHuidobroYarur2013}, 
while the uniqueness argument of \cite{tang2026uniqueness} is not applicable to Theorem  \ref{maintheorem}. 
In fact, the uniqueness argument of \cite{tang2026uniqueness} is applied to the equation \eqref{eq:general-scalar-field}, 
where the nonlinear term $f(u)$ is requested to satisfy  $uf'(u)-f(u)>0$ for any $u>0$, see \cite[Lemma~4.2]{tang2026uniqueness}.
This character is unfortunately false for the equation \eqref{mainequation} under the assumption (\ref{eq-range}). 
In order to prove Theorem \ref{maintheorem}, roughly speaking, we fix each integer $k\ge1$ and let $\varepsilon _n>0$ 
be sufficiently small, where $\varepsilon _n \searrow 0$ as $n\to\infty$.
We shall compare $u_{n,k}(x)$ with $Q_k(x)$
in $L^\infty(\R^N)$ as $n \to \infty$,
where $u_{n,k}(x)$ and $Q_k(x)$ are
$k$-node bound states centered at the origin  of \eqref{mainequation}
with $\varepsilon=\varepsilon_n$
and of \eqref{eq-limit}, respectively.
Towards this aim, we shall employ Tang's uniqueness  (cf. \cite{tang2026uniqueness}) of $Q_k(x)$, 
and we also establish in Lemma 4.1 the nondegeneracy of  $Q_k(x)$ in $H^1_{O(N)}(\R^N)$, 
where the radial space $H^1_{O(N)}(\R^N)$ is defined by
\begin{equation}
 \label{1:radial}
 \begin{aligned}
  \Hrad
:=\{u\in H^1(\R^N):\, 
u\circ T=u
\ \ \text{for any}\ \ T\in O(N)\}, 
 \end{aligned}
\end{equation}
and $O(N)$ denotes the orthogonal group on $\R^N$.

For any $u\in\Hrad$, we also denote $u(x)$ by $u(r)$,
where $r=|x|$.
To sketch the proof of Theorem \ref{maintheorem}, we now denote $\mathcal B_{\varepsilon,k}$ the set
of $k$-node bound states $u(r)$ for \eqref{mainequation}
centered at the origin and satisfying $u(0)>0$,
and let $Q_k(r)$ be the unique $k$-node bound state
of \eqref{eq-limit} centered at the origin
and satisfying $Q_k(0)>0$.
We also denote $\mathcal B_{\varepsilon,0}$
the set of positive solutions for \eqref{mainequation}
in $\Hrad\cap L^\infty(\R^N)$,
and let $Q_0(r)$ be the unique positive solution
of \eqref{eq-limit} in $\Hrad\cap L^\infty(\R^N)$.
It then yields from
\cite[Theorem~1.1]{CortazarGarciaHuidobroYarur2013}
that
\[
 \mathcal B_{\varepsilon,k}\ne\varnothing
 \ \ \text{for each integer}\ \ k\ge1
 \ \ \text{and any}\ \ 0<\varepsilon<\varepsilon_*.
\]
We shall carry out the proof of
Theorem~\ref{maintheorem} by the following three steps.

As the first step of proving Theorem~\ref{maintheorem}, we shall mainly establish the following uniform convergence of
all bound states $u_\varepsilon \in\mathcal B_{\varepsilon,k}$
for \eqref{mainequation} as $\varepsilon\searrow0$.

\begin{thm} \label{thm-main}
Suppose $u_\varepsilon\in\mathcal B_{\varepsilon,k}$ is a $k$-node bound state of \eqref{mainequation}, where
 $p, q>1$ satisfy the assumption (\ref{eq-range}). Then for each integer $k \geq 1$, there exist
 $\varepsilon_k\in(0,\varepsilon_*)$ and $C_k>0$
 such that for any $0<\varepsilon<\varepsilon_k$,
 \begin{equation}
\label{eq-uniform-bounds}
\sup_{u_\varepsilon\in\mathcal B_{\varepsilon,k}}
  \left(
  \norm{u_\varepsilon}_{H^1(\R^N)}
  +\norm{u_\varepsilon}_{L^\infty(\R^N)}
  \right)
  \le C_k.
 \end{equation}
 Moreover,
 \begin{equation}
\label{eq-uniform-convergence}
\sup_{u_\varepsilon\in\mathcal B_{\varepsilon,k}}
  \left(
  \norm{u_\varepsilon-Q_k}_{H^1(\R^N)}
  +\norm{u_\varepsilon-Q_k}_{L^\infty(\R^N)}
  \right)
  \to0
  \ \ \text{as}\ \ \varepsilon\searrow0.
 \end{equation}
\end{thm}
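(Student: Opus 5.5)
We argue by contradiction and compactness, in three stages: uniform bounds, subconvergence to a $k$-node solution of \eqref{eq-limit}, and identification of the limit via Tang's uniqueness of $Q_k$.

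\textbf{Step 1: uniform bounds.} Suppose \eqref{eq-uniform-bounds} fails. Then there are $\varepsilon_n\searrow0$ and $u_n\in\mathcal B_{\varepsilon_n,k}$ with $M_n:=\norm{u_n}_{L^\infty}\to\infty$; we first handle the $L^\infty$ norm. Since $u_n$ is radial and solves an autonomous ODE, the energy
\[
E(r)=\tfrac12 u_n'(r)^2+F_n(u_n(r)),\qquad F_n(s)=-\tfrac{s^2}2-\tfrac{\varepsilon_n}{q+1}|s|^{q+1}+\tfrac1{p+1}|s|^{p+1},
\]
is nonincreasing in $r$. Hence $M_n=|u_n(0)|$. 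We rescale by
\[
v_n(y)=M_n^{-1}u_n\bigl(M_n^{-(p-1)/2}y\bigr),
\]
which gives
\[
-\Delta v_n+M_n^{1-p}v_n=-\varepsilon_nM_n^{q-p}|v_n|^{q-1}v_n+|v_n|^{p-1}v_n .
\]
There are two cases.
\begin{itemize}
\item If $q<p$, or if $q>p$ with $\varepsilon_nM_n^{q-p}\to0$, then $v_n\to v$ locally in $C^2$ by elliptic estimates. Here $v(0)=1$, $|v|\le1$, and $-\Delta v=|v|^{p-1}v$. Moreover $v$ has at most $k$ nodes, because zeros of radial ODE solutions are simple and persist under $C^1$ convergence. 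Radial solutions of the Lane--Emden equation in the subcritical range have infinitely many zeros, a contradiction.
\item If $q>p$ and $\varepsilon_nM_n^{q-p}\ge c>0$, the negative term dominates. The limit is either $-\Delta v=|v|^{p-1}v-\lambda|v|^{q-1}v$ or, after a different normalization, a pure defocusing equation. In either case Pohozaev/energy monotonicity gives $F_n(M_n)\ge E(\infty)=0$, which forces $\varepsilon_nM_n^{q-p}\le C$. I expect this case to require care; the plan is to use $F_n(u_n(0))\ge0$, i.e. $\frac{1}{p+1}M_n^{p-1}\ge\frac12+\frac{\varepsilon_n}{q+1}M_n^{q-1}$, and then obtain a contradiction from the rescaled limit, which would be a radial solution of $-\Delta v=|v|^{p-1}v-\lambda|v|^{q-1}v$ with finitely many nodes and $v(0)=1$ that oscillates infinitely by Sturm comparison.
\end{itemize}
Once $L^\infty$ bounds hold, the $H^1$ bounds follow. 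The radial lemma together with the fact that after the last node $u_n$ is monotone gives uniform exponential decay. The last node $r_k^n$ stays bounded, since otherwise the tail solves a positive problem on a far exterior region with a bounded energy decreasing to $0$. Multiplying the equation by $u_n$ then gives the $H^1$ bound.

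\textbf{Step 2: convergence.} By Step 1 and radial compactness (Strauss), along any sequence $u_n\to u$ in $C^2_{loc}$ and in $L^r$ for $2<r<2^*$. The limit $u$ solves \eqref{eq-limit}, and $u(0)\ge$ some positive constant, because $F_n(u_n(0))\ge0$ forces $u_n(0)\ge((p+1)/2)^{1/(p-1)}$. The uniform exponential decay gives strong $H^1$ and $L^\infty$ convergence; the $H^1$ convergence comes from convergence of $\int|\nabla u_n|^2+u_n^2=\int|u_n|^{p+1}-\varepsilon_n\int|u_n|^{q+1}$. Uniform decay also rules out nodes escaping to infinity, and $C^1_{loc}$ convergence with simple zeros preserves the count. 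So $u$ has exactly $k$ nodes and $u(0)>0$, hence $u=Q_k$ by Tang's theorem.

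\textbf{Step 3: uniformity.} Since every sequence has a subsequence converging to the same limit $Q_k$, the supremum statement \eqref{eq-uniform-convergence} follows by contradiction.

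\textbf{Main obstacle.} The hardest points are the blow-up exclusion when $q$ is supercritical, together with uniform control of the location of the last node.
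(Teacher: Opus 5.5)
\textbf{Verdict.} Your outline has the same overall architecture as the paper: blow-up for the $L^\infty$ bound, control of the nodes, uniform exponential tails, and Tang's uniqueness. However, the two steps you flag as the main obstacles are exactly where the argument is missing, and in the first one your proposed mechanism fails.

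\textbf{Gap 1: the plateau case $\lambda=1$ in the blow-up ($q>p$).} Since $r=0$ is the global maximum of $\abs{u_n}$, you have $u_n''(0)=-f_{\varepsilon_n}(M_n)/N\le0$. Hence $\lambda_n:=\varepsilon_nM_n^{q-p}\le 1-M_n^{1-p}<1$. Your bound from $F_n(M_n)\ge0$ only gives $\lambda_n\le\frac{q+1}{p+1}$, which exceeds $1$.
\begin{itemize}
\item For $\lambda=\lim\lambda_n\in[0,1)$ the rescaled limit does oscillate. This does not follow from Sturm comparison alone: for $\lambda=0$ and $\frac{N}{N-2}<p<\frac{N+2}{N-2}$, the singular solution $c_*r^{-2/(p-1)}$ is positive for all $r>0$. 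The proof needs the regularity at $r=0$, e.g.\ the Pohozaev function of Lemma~\ref{lem-blowup-oscillation}. It vanishes at $0$, is nondecreasing because $\abs{v}\le1$ and $q>p$, and would tend to $0$ for a nonoscillating solution.
\item The real problem is $\lambda=1$. Then the constant $1$ solves the limit problem, so by ODE uniqueness the rescaled limit is $v\equiv1$. It has no zeros, and your argument gives no contradiction.
\end{itemize}
This case needs a separate quantitative idea. In the paper, let $R_n$ be the first radius where $v_n=\vartheta\in(0,1)$. Then:
\begin{itemize}
\item $R_n\to\infty$;
\item $E_n(R_n)\ge F_n(\vartheta)\ge c_1>0$;
\item $\abs{v_n'}\le L$ uniformly;
\item on each of the $k+1$ monotone pieces beyond $R_n$, the energy drops by at most $2(N-1)L/R_n$.
\end{itemize}
Summing gives $c_1\le 2(k+1)(N-1)L/R_n\to0$, a contradiction.

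\textbf{Gap 2: bounding the nodes.} Your sentence ``the tail solves a positive problem on a far exterior region with a bounded energy decreasing to $0$'' is not a contradiction by itself. Every bound state has a bounded Hamiltonian decreasing to $0$, and positive radial Dirichlet solutions on $\{\abs{x}>R\}$ exist for large $R$. An escaping zero $R_n$ can only be excluded by quantitatively matching with the region before it, where $u_n$ is close to the small tail of the local limit $Q_m$. The paper (Lemma~\ref{prop-noescape}) uses $\Pi=r^NH_\varepsilon+\frac{N-2}{2}r^{N-1}uu'$:
\begin{itemize}
\item $\Pi'=r^{N-1}\mathcal D_\varepsilon(u)\le0$ while $\abs{u}\le\eta$.
\item Hence $\Pi\le C_L$ from a fixed radius $L$ up to the first $T_n>R_n$ with $\abs{u_n(T_n)}=\eta_0$. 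Such $T_n$ exists because every nodal bump exceeds $\beta_{\varepsilon_n}>\beta_0$.
\item On the other hand, $H_{\varepsilon_n}>0$ and $F_{\varepsilon_n}(\eta_0)\le-c_2$ force $u_nu_n'(T_n)\ge\eta_0\sqrt{2c_2}$. So $\Pi(T_n)\ge cT_n^{N-1}\to\infty$, a contradiction.
\end{itemize}

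\textbf{Circularity in the decay step.} Your uniform exponential decay comes from the radial (Strauss) lemma. That presupposes the uniform $H^1$ bound, which you only derive afterwards from the node bound. So it cannot be used in Step~2 to prevent nodes from escaping. Moreover, even with bounded nodes, uniform decay of the last bump requires first showing that the local limit decays. The paper does this via Tang's classification together with $H_{0,V}\ge0$, which excludes oscillation about $\pm1$. Only then does comparison with $\delta e^{-c(r-R_0)}$ apply.
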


\noindent
The key of proving Theorem \ref{thm-main} is to derive
a uniform $L^\infty(\R^N)$ bound
of all bound states $u_\varepsilon(r)\in\mathcal B_{\varepsilon,k}$
for \eqref{mainequation},
which then yields that up to a subsequence if necessary,
the sequence $u_n(r)=u_{\varepsilon_n}(r) \in\mathcal B_{\varepsilon_n,k}$ satisfies
\[
u_n(r)\to Q_m(r)\ \ \text{in}\ \ C^2_{\mathrm{loc}}\bigl([0,+\infty)\bigr)\ \ \text{as}\ \ \varepsilon_n\searrow0,
\]
where the integer $m$ satisfies $0\le m\le k$.
If $0\le m<k$, then we shall prove that some zero point  of $u_n(r)$
 tends to $+\infty$ as $n\to\infty$. By deriving a radial Pohozaev identity of \eqref{mainequation},
we are however able to prove that all zero points of $u_n(r)$
remain bounded uniformly in $n>0$, which is a contradiction. This hence proves that $m=k$ and
$Q_m(r)\equiv Q_k(r)$ in $[0,+\infty)$.

The second step of proving Theorem~\ref{maintheorem}
is to derive the following inverse estimate
\begin{equation}
 \label{1:eq-Lk}
 \norm{\phi}_{H^1(\R^N)}
 \le A_k\norm{L_k\phi}_{H^{-1}(\R^N)},
 \ \ \forall \, \phi\in\Hrad,
\end{equation}
where the constant $A_k>0$ is independent of $\phi$, and the radial
linearized operator $L_k: \Hrad \to\Hmrad$ is defined by
\begin{equation}
 \label{eq-Lk}
 L_k:=-\Delta+1-p\abs{Q_k(x)}^{p-1}
 \,\ \ \text{in}\ \, \R^N.
\end{equation}
Here $\Hrad$ is defined in \eqref{1:radial},
and $\Hmrad:=(\Hrad)^*$.
More precisely, applying  the shooting variation of
\cite[Lemma~3.1(b) and Section~5.8]{tang2026uniqueness},
together with the ODE uniqueness theorem,
we shall first prove that any nonzero radial kernel
$\phi\in C^2(\R^N)$ of the operator $L_k$ satisfies
$\abs{\phi(x)}\to+\infty$ as $\abs{x}\to+\infty$.
Thus, the operator $L_k$ does not have nonzero kernel
$\phi\in\Hrad$, which  proves the nondegeneracy
of $Q_k$ in $\Hrad$.
Applying this nondegeneracy, in this step we finally derive the inverse estimate (\ref{1:eq-Lk}).

As the last step of proving Theorem~\ref{maintheorem},
on the contrary, suppose that there exists a sequence
$\{\varepsilon_n\}\subset(0,\varepsilon_*)$
satisfying $\varepsilon_n\searrow0$ as $n\to\infty$
such that for each $n\ge1$, \eqref{mainequation}
with $\varepsilon=\varepsilon_n$ admits two different
$k$-node bound states
$u_n,w_n\in\mathcal B_{\varepsilon_n,k}$.
Fix $\eta_k>0$ to be sufficiently small.
It then follows from Theorem~\ref{thm-main} that for any sufficiently large $n>0$, 
\begin{equation}
\label{1A:eq-Lk}
\norm{u_n-Q_k}_{L^\infty(\R^N)}<\eta_k,
 \ \ \norm{w_n-Q_k}_{L^\infty(\R^N)}<\eta_k. 
\end{equation}
In Section~\ref{sec-uniqueness},
we derive from \eqref{1A:eq-Lk} that for any sufficiently large $n>0$,
\[
 \norm{L_k(u_n-w_n)}_{H^{-1}(\R^N)}
 \le\frac1{2A_k}\norm{u_n-w_n}_{H^1(\R^N)},
\]
where the constant $A_k>0$ is as in \eqref{1:eq-Lk}.
Since $u_n-w_n\in\Hrad$,
we deduce from \eqref{1:eq-Lk} that for any sufficiently large $n>0$,
\[
 \begin{aligned}
  0<\norm{u_n-w_n}_{H^1(\R^N)}
  &\le A_k\norm{L_k(u_n-w_n)}_{H^{-1}(\R^N)}\\
  &\le\frac12\norm{u_n-w_n}_{H^1(\R^N)}, 
 \end{aligned}
\]
a contradiction. 
We therefore conclude that there exists
$\varepsilon_k\in(0,\varepsilon_*)$
such that $\mathcal B_{\varepsilon,k}$
contains exactly one element
for any $0<\varepsilon<\varepsilon_k$.
This completes in Section~\ref{sec-uniqueness} the proof of
Theorem~\ref{maintheorem}.

In Section \ref{sec-uniqueness}, we also obtain in Proposition~\ref{cor-perturbed-nondegeneracy}
that for any sufficiently small $\varepsilon>0$, the unique $k$-node bound state
$u_\varepsilon\in\mathcal B_{\varepsilon,k}$
of \eqref{mainequation} is nondegenerate in $\Hrad$, 
in the sense that
\[
 \ker_{\Hrad}\left(
 -\Delta+1-p\abs{u_\varepsilon(x)}^{p-1}
 +\varepsilon q\abs{u_\varepsilon(x)}^{q-1}
 \right)=\{0\}, 
\]
where $\Hrad$ is defined by \eqref{1:radial}.


This paper is organized as follows. In Section~\ref{sec-upper-bound}, we establish
the uniform $L^\infty(\R^N)$ estimates of $k$-node bound states for \eqref{mainequation} satisfying \eqref{eq-range}
as $\varepsilon\searrow0$. Section~\ref{sec-noescape} is devoted to the proof of Theorem~\ref{thm-main} 
on the uniform convergence of $k$-node bound states for \eqref{mainequation} satisfying \eqref{eq-range} as $\varepsilon\searrow0$.
The proof of Theorem~\ref{maintheorem} is finally completed in Section~\ref{sec-uniqueness}.

\section{Uniform Estimates of Bound States as
\texorpdfstring{$\varepsilon\searrow0$}{epsilon decreases to 0}}
\label{sec-upper-bound}
For each integer $k\ge1$,
from this section on, we always suppose 
$u_\varepsilon(r)$ to be a $k$-node bound state
of \eqref{mainequation} satisfying \eqref{eq-range},
where $\varepsilon\in(0,\varepsilon_*)$
and $\varepsilon_*$ is defined by
\eqref{eq-existence-threshold}.
Recall that $\mathcal B_{\varepsilon,k}$ denotes
the set of $k$-node bound states $u_\varepsilon(r)$
for \eqref{mainequation} centered at the origin and satisfying $u_\varepsilon(0)>0$.
The main purpose of this section is to establish the following proposition, 
which is concerned with a uniform $L^\infty$ estimate of $u_\varepsilon(r)$ as $\varepsilon\searrow0$.

\begin{prop}
 \label{prop-amplitude}
Under the assumption (\ref{eq-range}), suppose $u_\varepsilon\in\mathcal B_{\varepsilon,k}$ is a $k$-node bound state 
of \eqref{mainequation}.  Then  there exist $M_k>0$  and $\varepsilon_k\in(0,\varepsilon_*)$ such that  for any $0<\varepsilon<\varepsilon_k$,
 \begin{equation}
\label{eq-amplitude-bound-qgeqp}
\norm{u_\varepsilon}_{L^\infty(\mathbb R^N)}\le M_k.
 \end{equation}
\end{prop}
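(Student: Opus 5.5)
We argue by contradiction via a blow-up argument. Suppose there exist $\varepsilon_n\searrow0$ and $u_n:=u_{\varepsilon_n}\in\mathcal B_{\varepsilon_n,k}$ with $M_n:=\norm{u_n}_{L^\infty(\R^N)}\to+\infty$. Since $u_n$ is radial and the ODE $u''+\frac{N-1}{r}u'=u+\varepsilon_n|u|^{q-1}u-|u|^{p-1}u$ forces $|u_n|$ to attain its maximum only where $|u_n|^{p-1}\ge 1+\varepsilon_n|u_n|^{q-1}$, we locate $r_n\ge0$ with $|u_n(r_n)|=M_n$. We then rescale
\[
v_n(y):=M_n^{-1}u_n\bigl(r_n e_1+M_n^{-\frac{p-1}{2}}y\bigr),
\]
which satisfies $\abs{v_n}\le1$, $\abs{v_n(0)}=1$ and
\[
-\Delta v_n+M_n^{-(p-1)}v_n=-\varepsilon_nM_n^{q-p}\abs{v_n}^{q-1}v_n+\abs{v_n}^{p-1}v_n .
\]
The first observation is that $v_n(0)$ must be a point where the right-hand side has the correct sign, namely $\varepsilon_nM_n^{q-p}\le 1$ (up to the lower-order term). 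In the case $q<p$ this is automatic since $\varepsilon_nM_n^{q-p}\to0$. In the case $q>p$ we set $\lambda_n:=\varepsilon_nM_n^{q-p}\in[0,1]$ and pass to a subsequence $\lambda_n\to\lambda_\infty\in[0,1]$. Indeed, the existence threshold \eqref{eq-existence-threshold} (the positivity of $F$ on the range of $u$, as required by the conservation of energy $\frac12u'^2+F(u)$ being nonincreasing and tending to $0$) gives the sharper constraint $F_{\lambda_\infty}(1)\ge0$ with $F_\lambda(s)=\frac{s^{p+1}}{p+1}-\lambda\frac{s^{q+1}}{q+1}$, so $\lambda_\infty<1$ strictly, in a quantitative sense.

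Next we use elliptic estimates to extract the limit $v_n\to v$ in $C^2_{\mathrm{loc}}(\R^N)$, where $v$ solves $-\Delta v=|v|^{p-1}v-\lambda_\infty|v|^{q-1}v$, with $|v|\le1$ and $|v(0)|=1$. We then split according to whether $r_nM_n^{(p-1)/2}$ is bounded or not. If it is bounded, $v$ is radial about a fixed point. If it is unbounded, $v$ depends on one variable only and solves an ODE on $\R$. The key counting input is that $u_n$ has exactly $k$ nodes, so $v$ has at most $k$ sign changes along rays (respectively on $\R$). For $\lambda_\infty=0$ and $p<\frac{N+2}{N-2}$ we contradict Liouville-type results: Gidas--Spruck for radial or positive-at-infinity solutions, or more precisely the fact that radial solutions of the Lane--Emden equation in the subcritical range oscillate infinitely often, by Emden--Fowler analysis. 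The one-dimensional limit is excluded because a bounded 1D solution of $-v''=f(v)$ with $F(1)>0$ is periodic and sign-changing infinitely often, or is a homoclinic/constant solution incompatible with $|v(0)|=1$ being a maximum. For $\lambda_\infty>0$ (case $q>p$) the radial limit solves $\Delta v+f_\infty(v)=0$ with $f_\infty(s)=s^p-\lambda_\infty s^q$ and zero mass. We argue by the radial energy $E=\frac12v'^2+F_\infty(v)$, which is decreasing, and show that $v$ oscillates infinitely often: bounded radial solutions converge to zero, and since $f_\infty$ is superlinear near $0$ with subcritical exponent $p$, the Emden--Fowler oscillation persists. This contradicts finitely many nodes.

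The main obstacle we expect is exactly this infinite-oscillation claim for the limiting zero-mass radial problem, particularly with $\lambda_\infty>0$. Here we plan to use Pohozaev: a radial solution with finitely many zeros, eventually of one sign and decaying, would make the Pohozaev identity on exterior balls $\{|x|>R\}$ beyond the last zero inconsistent in the subcritical regime. This uses $(N-2)(p+1)<2N$, with the $q$-term handled since near $0$ it is of higher order. A second delicacy is tracking that the node count survives the rescaling (zeros of $v$ correspond to zeros of $u_n$ by local uniform convergence plus simple zeros via ODE uniqueness). Once all limits are excluded, $M_n$ is bounded, yielding $M_k$ and $\varepsilon_k$.
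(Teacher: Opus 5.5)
There is a genuine gap in the case $q>p$. Your claim that $\lambda_\infty<1$ holds strictly does not follow from what you wrote. With $F_\lambda(s)=\frac{s^{p+1}}{p+1}-\lambda\frac{s^{q+1}}{q+1}$ one has $F_1(1)=\frac1{p+1}-\frac1{q+1}>0$. So $F_{\lambda_\infty}(1)\ge0$ only gives $\lambda_\infty\le\frac{q+1}{p+1}$, which is weaker than your sign condition at the maximum, $\lambda_n<1-M_n^{1-p}$. The threshold $\varepsilon_n<\varepsilon_*$ reads $\lambda_n<\varepsilon_*M_n^{q-p}\to\infty$ after rescaling, so it gives nothing either. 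Hence $\lambda_n\to1$ cannot be excluded a priori.

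In that case the blow-up limit solves $-\Delta v=|v|^{p-1}v-|v|^{q-1}v$ with $v\le1=v(0)$ and $\nabla v(0)=0$. Since $1$ is an equilibrium, ODE uniqueness (or the strong maximum principle for $1-v\ge0$) forces $v\equiv1$, in both your radial and one-dimensional subcases. This limit has no zeros, so counting sign changes of the local limit gives no contradiction. Neither the Emden--Fowler/Liouville input nor a Pohozaev argument applies here. The paper's Pohozaev bound $\mathcal P_\tau'\ge(1-\tau)A_pr^{N-1}|w|^{p+1}$ in Lemma~\ref{lem-blowup-oscillation} degenerates exactly at $\tau=1$.

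What is missing is an argument that sees beyond compact sets. By Lemma~\ref{lem-center-maximum} the maximum sits at $r=0$, so your case split is unnecessary. The paper then uses the rescaled energy $E_n=\frac12|v_n'|^2+F_n(v_n)$, with $F_n(s)=-\frac{M_n^{1-p}s^2}{2}+\frac{|s|^{p+1}}{p+1}-\lambda_n\frac{|s|^{q+1}}{q+1}$. This energy is positive, nonincreasing, and tends to $0$. The argument runs as follows.
\begin{itemize}
\item Since $v_n\to1$ locally uniformly, the first radius $R_n$ with $v_n(R_n)=\vartheta\in(0,1)$ tends to $+\infty$.
\item At that radius, $E_n(R_n)\ge F_n(\vartheta)\ge c_1>0$ uniformly in $n$.
\item The energy bound gives $|v_n'|\le L:=(2/(p+1))^{1/2}$.
\item Beyond $R_n$, $v_n$ has exactly $k$ critical points, hence at most $k+1$ monotone pieces.
\item On each piece the dissipation $(N-1)\int|v_n'|^2r^{-1}\,dr$ is at most $2(N-1)L/R_n$, because $|v_n|\le1$ bounds the total variation of a monotone piece by $2$.
\item Summing over the pieces yields $c_1\le 2(k+1)(N-1)L/R_n\to0$, a contradiction.
\end{itemize}
This uses the node count quantitatively, through the number of monotone pieces, rather than through a limiting profile.

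Separately, for $0<\lambda_\infty<1$ your Pohozaev sketch needs repair. It should be run from $r=0$, where the Pohozaev function vanishes, using the global bound $|v|\le1$ together with $\lambda_\infty<1$. The $q$-term is not of higher order near the origin, since $v(0)=1$ there. This must then be combined with the decay $|v|\lesssim r^{-2/(p-1)}$ of a one-signed tail, as in Lemma~\ref{lem-blowup-oscillation}. Working on exterior balls leaves boundary terms at the last zero that you have not controlled.
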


  For any $\varepsilon\in [0,\varepsilon_*)$, where
$\varepsilon_*$ is defined by \eqref{eq-existence-threshold}, define
\begin{equation}\label{eq-fF}
\begin{aligned}
f_\varepsilon(s)
&:= -s-\varepsilon\abs{s}^{q-1}s+\abs{s}^{p-1}s,\\
F_\varepsilon(s)
&:= -\frac{s^2}{2}
-\frac{\varepsilon\abs{s}^{q+1}}{q+1}
+\frac{\abs{s}^{p+1}}{p+1},
\end{aligned}
\end{equation}
where $p,q>1$ satisfy \eqref{eq-range}, so that
\begin{equation}
 \label{eq-F0}
 F_0(s):=-\frac{s^2}{2}
 +\frac{\abs{s}^{p+1}}{p+1},
 \ \ s\in\mathbb R.
\end{equation}
Note that $F_0(\cdot)$ has a unique positive zero point
\begin{equation}\label{eq-definebeta}
\beta_0:=\left(\frac{p+1}{2}\right)^{\frac1{p-1}}>0.
\end{equation}
For fixed $\varepsilon\in(0,\varepsilon_*)$,
we denote $u_\varepsilon(r)$ by $u(r)$ for convenience.
Any bound state $u=u(r)$ of \eqref{mainequation}
satisfies the following ordinary differential equation
\begin{equation}
 \label{eq-radial}
 \begin{cases}
  u''+\frac{N-1}{r}u'+f_\varepsilon(u)=0   \,\ \ \text{in}\ \ r>0, \\
  u'(0)=0,
  \ \ u(r)\to0
  \ \ \text{as}\ \ r\to\infty, 
 \end{cases}
\end{equation}
where $f_\varepsilon(\cdot)$ is as in \eqref{eq-fF}. 
We also define the corresponding energy function of \eqref{eq-radial} by
\begin{equation}
 \label{eq-defineH}
 H_\varepsilon(r)
 :=\frac12\abs{u'(r)}^2+F_\varepsilon(u(r)),
 \ \ r\ge0, 
\end{equation}
where $F_\varepsilon(\cdot)$ is as in \eqref{eq-fF}. 
Since $u(r)\to0$ as $r\to\infty$, we deduce from
\eqref{eq-radial} that $u'(r)\to0$ as $r\to\infty$.
It then follows from \eqref{eq-radial} and
\eqref{eq-defineH} that
\begin{equation}
 \label{eq-Hamiltonian-derivative}
 H_\varepsilon'(r)
 =-\frac{N-1}{r}\abs{u'(r)}^2\le0
 \ \ \text{for any}\ \ r>0,
 \ \ \lim_{r\to\infty}H_\varepsilon(r)=0,
\end{equation}
which further imply that $H_\varepsilon(r)\ge0$ holds
for any $r\ge0$.

We claim that
\begin{equation}
 \label{eq-H-positive}
 H_\varepsilon(r)>0
 \ \ \text{for any}\ \ r\ge0.
\end{equation}
On the contrary, suppose that $H_\varepsilon(r_0)=0$ holds
for some $r_0\ge0$.
It then follows from \eqref{eq-Hamiltonian-derivative} that
\[
 H_\varepsilon(r)=0, \ \ \text{for any}\ \ r>r_0, 
 \]
 and hence $H'_\varepsilon(r)=0$ for any $r>r_0$. This shows that
 $u'(r) \equiv 0$ for any $r>r_0$.
The decaying property of \eqref{eq-radial} further implies that
$u(r)\equiv 0$ for any $r>r_0$. Applying the ODE uniqueness theorem, we conclude that $u(r)\equiv0$ in $(0, +\infty)$,
which is a contradiction. This thus proves the claim \eqref{eq-H-positive}.

In order to prove Proposition \ref{prop-amplitude}, 
we now derive the following properties of \(f_\varepsilon(s)\) and
\(F_\varepsilon(s)\).

\begin{lem}
 \label{lem-scalar-structure}
 For any $\varepsilon\in (0,\varepsilon_*)$,
let $f_\varepsilon(s)$ and $F_\varepsilon(s)$ be defined by \eqref{eq-fF}, where $p,q>1$ satisfy \eqref{eq-range},
  and $\varepsilon_*$
 is defined by \eqref{eq-existence-threshold}.
 Then there exist $a_\varepsilon,\beta_\varepsilon,
 b_\varepsilon$ satisfying
\begin{equation}
  \label{A:eq-root-limits}
  1<a_\varepsilon<\beta_\varepsilon
  <b_\varepsilon\leq+\infty,
 \end{equation}
and
 \begin{equation}
  \label{eq-root-limits}
  a_\varepsilon\to1,
  \ \ \beta_\varepsilon\to\beta_0=\left(\frac{p+1}{2}\right)^{\frac1{p-1}},
  \ \ b_\varepsilon\to+\infty
  \ \ \text{as}\ \ \varepsilon\searrow0,
 \end{equation}
 where $b_\varepsilon<+\infty$ if $q>p$, and
 $b_\varepsilon=+\infty$ if $q<p$, such that
 \begin{equation}
  \label{eq-f-sign-high}
  f_\varepsilon(s)
  \begin{cases}
   <0, & s\in(0,a_\varepsilon);\\
   >0, & s\in(a_\varepsilon,b_\varepsilon);\\
   <0, & s\in(b_\varepsilon,+\infty);
  \end{cases}
 \end{equation}
 \begin{equation}
  \label{eq-F-sign-high}
  F_\varepsilon(s)
  \begin{cases}
   <0, & s\in(0,\beta_\varepsilon);\\
   >0, & s\in(\beta_\varepsilon,b_\varepsilon);
  \end{cases}
 \end{equation}
 and $F_\varepsilon(s)$ is strictly increasing on the interval
 \begin{equation}
  \label{Ivarepsilon}
  \mathcal I_\varepsilon:=(a_\varepsilon,b_\varepsilon).
 \end{equation}
\end{lem}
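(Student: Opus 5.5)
Our plan is to work with the auxiliary function $g_\varepsilon(s):=f_\varepsilon(s)/s=-1-\varepsilon s^{q-1}+s^{p-1}$ for $s>0$, whose sign agrees with that of $f_\varepsilon$. Similarly we use $G_\varepsilon(s):=F_\varepsilon(s)/s^2=-\frac12-\frac{\varepsilon s^{q-1}}{q+1}+\frac{s^{p-1}}{p+1}$, whose sign agrees with that of $F_\varepsilon$.

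\textbf{Case $q<p$.} We set $t=s^{p-1}$. Then $g_\varepsilon=-1-\varepsilon t^{\theta}+t$ with $\theta=(q-1)/(p-1)\in(0,1)$. Its derivative is $1-\varepsilon\theta t^{\theta-1}$, which is negative and then positive. Since $g_\varepsilon(0^+)=-1<0$ and $g_\varepsilon\to+\infty$, there is exactly one root $a_\varepsilon$, and we put $b_\varepsilon=+\infty$. The same argument applied to $G_\varepsilon$ gives a unique root $\beta_\varepsilon$ with $F_\varepsilon<0$ on $(0,\beta_\varepsilon)$ and $F_\varepsilon>0$ beyond it.

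\textbf{Case $q>p$.} Here $g_\varepsilon$ and $G_\varepsilon$ are of the form $-c+\alpha t-\varepsilon\gamma t^{\theta}$ with $\theta>1$, so they are strictly concave in $t=s^{p-1}$. Each is negative at $0$ and tends to $-\infty$, hence has at most two positive roots. At the point $\beta_\varepsilon$ we have $F_\varepsilon(\beta_\varepsilon)=0$, and as $\varepsilon$ varies in $(0,\varepsilon_*)$, hypothesis (H3) says exactly that $G_\varepsilon$ is positive somewhere. So $G_\varepsilon$ has two roots $\beta_\varepsilon<\beta'_\varepsilon$.

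For $g_\varepsilon$ we use $F_\varepsilon'=f_\varepsilon$ and $F_\varepsilon(0)=0$. Since $F_\varepsilon$ becomes positive, $f_\varepsilon$ must be positive somewhere, so $g_\varepsilon$ has two roots $a_\varepsilon<b_\varepsilon$, and \eqref{eq-f-sign-high} follows. Moreover $F_\varepsilon$ is decreasing on $(0,a_\varepsilon)$, hence negative there, so $\beta_\varepsilon>a_\varepsilon$. Also $F_\varepsilon$ increases on $(a_\varepsilon,b_\varepsilon)$ and is positive just after $\beta_\varepsilon$, so $\beta_\varepsilon<b_\varepsilon$ and $F_\varepsilon>0$ on $(\beta_\varepsilon,b_\varepsilon]$. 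This gives \eqref{eq-F-sign-high} on the stated ranges.

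In both cases $F_\varepsilon'=f_\varepsilon>0$ on $\mathcal I_\varepsilon$, so $F_\varepsilon$ is strictly increasing there. Finally $a_\varepsilon>1$, because $g_\varepsilon(1)=-\varepsilon<0$ and $g_\varepsilon<0$ on $(0,a_\varepsilon)$ (the first root lies beyond $1$).

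\textbf{Limits as $\varepsilon\searrow0$.}
\begin{itemize}
\item For $a_\varepsilon$ and $\beta_\varepsilon$: on compact subsets of $(0,\infty)$, $g_\varepsilon\to s^{p-1}-1$ and $G_\varepsilon\to G_0$ uniformly. The limits have the simple roots $1$ and $\beta_0$, and they change sign there. So for any $\delta>0$ and small $\varepsilon$, the functions $g_\varepsilon$ and $G_\varepsilon$ change sign in $(1-\delta,1+\delta)$ and $(\beta_0-\delta,\beta_0+\delta)$ respectively. By the sign structure already established, these sign changes are the first roots $a_\varepsilon$ and $\beta_\varepsilon$. In particular, $g_\varepsilon$ and $G_\varepsilon$ are negative on $(0,1-\delta]$ and $(0,\beta_0-\delta]$, because the limits are negative there and the convergence is uniform on $[0,K]$.
\item For $b_\varepsilon$ when $q>p$: take any $K$. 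Then $g_\varepsilon(K)\to K^{p-1}-1>0$, so $K<b_\varepsilon$ for small $\varepsilon$. Hence $b_\varepsilon\to\infty$.
\end{itemize}

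The main technical point is the case $q>p$. There one needs existence of the two roots of $g_\varepsilon$ and the ordering $a_\varepsilon<\beta_\varepsilon<b_\varepsilon$, and this hinges on using $\varepsilon<\varepsilon_*$ precisely as the condition that $\sup F_\varepsilon>0$.
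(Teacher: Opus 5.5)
Your overall scheme is sound: concavity in $t=s^{p-1}$ when $q>p$, monotonicity of $g_\varepsilon$ and $G_\varepsilon$ when $q<p$, $\beta_\varepsilon$ as the first zero of $G_\varepsilon$, and the limits via uniform convergence on compacts. But in the case $q>p$ you assume the one step that is not soft instead of proving it.

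The lemma does not assume (H3). It assumes $\varepsilon<\varepsilon_*$, with $\varepsilon_*$ given by the explicit formula \eqref{eq-existence-threshold}. The content of the case $q>p$ is that this formula forces $F_\varepsilon$ to be positive somewhere. The introduction's remark that (H3) holds iff $\varepsilon<\varepsilon_*$ is stated without proof, and its ``if'' half is exactly what this lemma's proof establishes. Invoking it therefore assumes what has to be shown. Without it you get neither the two zeros of $G_\varepsilon$ nor, via $F_\varepsilon'=f_\varepsilon$, the two zeros $a_\varepsilon<b_\varepsilon$ of $g_\varepsilon$.

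You must verify this step. The paper does it with the test point $s_*=\bigl(\frac{(p+1)(q-1)}{2(q-p)}\bigr)^{1/(p-1)}$:
\begin{itemize}
\item One checks $\varepsilon_* s_*^{q-1}=\frac{(p-1)(q+1)}{2(q-p)}$, which gives $F_{\varepsilon_*}(s_*)=0$ and $f_{\varepsilon_*}(s_*)=0$.
\item Since both are affine in $\varepsilon$, it follows that $F_\varepsilon(s_*)=\frac{\varepsilon_*-\varepsilon}{q+1}s_*^{q+1}>0$ and $f_\varepsilon(s_*)=(\varepsilon_*-\varepsilon)s_*^q>0$.
\end{itemize}
Within your own framework you could instead maximize the concave function $G_\varepsilon=-\frac12+\frac{t}{p+1}-\frac{\varepsilon}{q+1}t^\theta$. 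At its critical point $t_\varepsilon$ one gets $\max G_\varepsilon=-\frac12+\frac{t_\varepsilon}{p+1}\cdot\frac{\theta-1}{\theta}$. This is positive exactly when $t_\varepsilon>s_*^{p-1}$, that is, exactly when $\varepsilon<\varepsilon_*$. This computation is where the formula for $\varepsilon_*$ comes from.

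There are two smaller slips.
\begin{itemize}
\item When $q>p$, $g_\varepsilon(1)=-\varepsilon<0$ does not by itself give $a_\varepsilon>1$. It only places $1$ in $(0,a_\varepsilon)\cup(b_\varepsilon,\infty)$. Use instead that $g_\varepsilon(s)=(s^{p-1}-1)-\varepsilon s^{q-1}<0$ for every $s\in(0,1]$, as the paper does.
\item The ordering $a_\varepsilon<\beta_\varepsilon$ also needs to be stated in the case $q<p$. The same argument, $F_\varepsilon<0$ on $(0,a_\varepsilon]$, applies there.
\end{itemize}
With these repairs your proof is complete and essentially parallels the paper's. Locating $\beta_\varepsilon$ as the first zero of $G_\varepsilon$ is only a mild variant of the paper's route through the monotonicity of $F_\varepsilon$ on $\mathcal I_\varepsilon$.
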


\begin{proof}
Define for any $\varepsilon\in(0,\varepsilon_*)$,
\[
 h_\varepsilon(s):=s^{p-1}-\varepsilon s^{q-1},
 \ \ s\ge0.
\]
It then yields from \eqref{eq-fF} that
$f_\varepsilon(s)=s(h_\varepsilon(s)-1)$ for any $s>0$,
and hence

\begin{equation}
 \label{eq-roots-equation}
 f_\varepsilon(s)=0,\ \ s>0;
 \ \ \Longleftrightarrow\ \
 h_\varepsilon(s)=1,\ \ s>0.
\end{equation}
Since $h_\varepsilon(s)<1$ holds for any $0<s\le1$,
we deduce from \eqref{eq-roots-equation} that any
positive zero point of $f_\varepsilon(s)$ belongs to
$(1,+\infty)$. We next consider the following two cases:

\textit{Case 1: $q>p$.}
For this case, we calculate that
\[
 h_\varepsilon'(s)
 =s^{p-2}[(p-1)-\varepsilon(q-1)s^{q-p}],
 \ \ s>0.
\]
It then follows that $h_\varepsilon(s)$ is strictly increasing
on $(0,\sigma_\varepsilon)$ and strictly decreasing
on $(\sigma_\varepsilon,+\infty)$, where
\[
 \sigma_\varepsilon
 :=\left(\frac{p-1}{(q-1)\varepsilon}\right)^{\frac1{q-p}}>0.
\]
Define
\[
 s_*:=
 \left(\frac{(p+1)(q-1)}{2(q-p)}\right)^{\frac1{p-1}}
 >\beta_0=\left(\frac{p+1}{2}\right)^{\frac1{p-1}}>1.
\]
It yields from \eqref{eq-existence-threshold} that
\[
 \varepsilon_*
 =\frac{(p-1)(q+1)}{2(q-p)}s_*^{1-q}.
\]
We then obtain from \eqref{eq-fF} that
\begin{equation}
 \label{eq-positive-test-point}
 f_\varepsilon(s_*)=(\varepsilon_*-\varepsilon)s_*^q>0,
 \ \ F_\varepsilon(s_*)
 =\frac{\varepsilon_*-\varepsilon}{q+1}s_*^{q+1}>0.
\end{equation}
Since $h_\varepsilon(0)=0$ and
\[
 h_\varepsilon(s)\to-\infty
 \ \ \text{as}\ \ s\to+\infty,
\]
applying the above monotonicity
of $h_\varepsilon(s)$, we deduce from \eqref{eq-roots-equation} and
\eqref{eq-positive-test-point} that $f_\varepsilon(s)$
has exactly two positive zero points $a_\varepsilon$ and
$b_\varepsilon$ satisfying
\[
 1<a_\varepsilon
 <b_\varepsilon<+\infty.
\]
This proves \eqref{eq-f-sign-high}. We further derive from
\eqref{eq-f-sign-high} and \eqref{eq-positive-test-point}  that $a_\varepsilon<s_*<b_\varepsilon$.
Since $F_\varepsilon'(s)=f_\varepsilon(s)>0$ holds for any $s\in(a_\varepsilon,b_\varepsilon)$,
we have
\[
 F_\varepsilon(b_\varepsilon)
 >F_\varepsilon(s_*)>0.
\]

\textit{Case 2: $q<p$.}
For this case, we calculate that
\[
 h_\varepsilon'(s)
 =s^{q-2}[(p-1)s^{p-q}-\varepsilon(q-1)],
 \ \ s>0.
\]
This yields that $h_\varepsilon(s)$ is strictly decreasing
on $(0,\widehat s_\varepsilon)$ and strictly increasing
on $(\widehat s_\varepsilon,+\infty)$, where
\[
 \widehat s_\varepsilon
 :=\left(\frac{\varepsilon(q-1)}{p-1}\right)^{\frac1{p-q}}>0.
\]
Since $h_\varepsilon(0)=0$ and
\[
 h_\varepsilon(s)\to+\infty
 \ \ \text{as}\ \, s\to+\infty,
\]
we deduce from \eqref{eq-roots-equation} that
$f_\varepsilon(s)$ has exactly one  positive zero point
$a_\varepsilon>1$.
Taking $b_\varepsilon=+\infty$, we then obtain 
\eqref{eq-f-sign-high}.
Moreover, it follows from \eqref{eq-fF} that
\[
 F_\varepsilon(s)\to+\infty
 \ \ \text{as}\ \, s\to+\infty.
\]

For both cases, we derive from \eqref{eq-fF} and
\eqref{eq-f-sign-high} that $F_\varepsilon(s)$ is
strictly decreasing on $(0,a_\varepsilon)$
and strictly increasing on $\mathcal I_\varepsilon$.
Moreover,
\[
 F_\varepsilon(a_\varepsilon)
 =\int_0^{a_\varepsilon}f_\varepsilon(s)\,ds<0,
 \ \ \lim_{s\nearrow b_\varepsilon}F_\varepsilon(s)>0.
\]
Since $F_\varepsilon(s)$ is continuous and strictly
increasing on $\mathcal I_\varepsilon$,
there exists a unique
$\beta_\varepsilon\in(a_\varepsilon,b_\varepsilon)$
satisfying $F_\varepsilon(\beta_\varepsilon)=0$.
It then follows that \eqref{eq-F-sign-high} holds.

We finally prove \eqref{eq-root-limits}.
For any fixed $t>1$, we have
\[
 h_\varepsilon(t)\to t^{p-1}>1
 \ \ \text{as}\ \ \varepsilon\searrow0.
\]
By \eqref{eq-fF} and \eqref{eq-f-sign-high}, this implies that
\[
 1<a_\varepsilon<t<b_\varepsilon,
\]
where $\varepsilon>0$ is sufficiently small.
Since $t>1$ is arbitrary, we obtain that
\begin{equation}
 \label{eq-endpoint-limits}
 a_\varepsilon\to1,
 \ \ b_\varepsilon\to+\infty
 \ \ \text{as}\ \ \varepsilon\searrow0.
\end{equation}
For any fixed $\delta\in(0,\beta_0-1)$, we deduce from
\eqref{eq-endpoint-limits} that
for any sufficiently small $\varepsilon>0$, 
\[
 a_\varepsilon<\beta_0-\delta
 <\beta_0+\delta<b_\varepsilon. 
\]
Note from \eqref{eq-fF} and \eqref{eq-definebeta} that
\[
  F_\varepsilon(\beta_0-\delta)
 \to F_0(\beta_0-\delta)<0,\ \
  F_\varepsilon(\beta_0+\delta)
 \to F_0(\beta_0+\delta)>0
 \ \ \text{as}\ \ \varepsilon\searrow0.
\]
Since $F_\varepsilon(s)$ is strictly increasing
on $\mathcal I_\varepsilon$, it yields that
\[
 \beta_0-\delta<\beta_\varepsilon<\beta_0+\delta,
\]
where $\varepsilon>0$ is sufficiently small.
Since $\delta>0$ can be arbitrarily small,
we conclude that $\beta_\varepsilon\to\beta_0$
as $\varepsilon\searrow0$, which hence proves \eqref{eq-root-limits}.
This completes the proof of Lemma~\ref{lem-scalar-structure}.
\end{proof}

Applying Lemma~\ref{lem-scalar-structure},
we next employ the monotonicity of
\(H_\varepsilon(r)\) in $(0, +\infty)$
to prove the following nodal geometry of bound states.

\begin{lem}
 \label{lem-center-maximum}
For any fixed $\varepsilon\in(0,\varepsilon_*)$, suppose that $1<\beta_\varepsilon
  <b_\varepsilon\leq+\infty$ are as in \eqref{A:eq-root-limits}, and
$u(r)$ is a solution of \eqref{eq-radial}
satisfying $u(0)>0$.
 Then
 \begin{equation}
  \label{eq-center-range}
  \beta_\varepsilon<u(0)<b_\varepsilon,
 \end{equation}
 and  $r=0$ is the unique global maximum point of $\abs {u(r)}$ in $[0,+\infty)$, in the sense that
\begin{equation}
\label{eq-global-maximum}
\norm{u}_{L^\infty(\R^N)}
 =u(0)>\abs{u(r)},
 \ \ r>0.
\end{equation}
 Moreover, $u'(r)$ has exactly one zero point $r_I$ in each
connected component $I$ of $\{r\ge0:u(r)\neq0\}$, where
\begin{equation}\label{urI-beta}
 \abs{u(r_I)}
 =\max_{r\in I}\abs{u(r)}>\beta_\varepsilon.
\end{equation}
\end{lem}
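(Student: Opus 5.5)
The plan is to use the monotone energy $H_\varepsilon$ from \eqref{eq-Hamiltonian-derivative}--\eqref{eq-H-positive} together with the sign structure of $f_\varepsilon$ and $F_\varepsilon$ from Lemma~\ref{lem-scalar-structure}.

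\emph{Lower bound on $u(0)$ and critical values.} At every critical point $r_c\ge0$ of $u$ (including $r_c=0$) we have $H_\varepsilon(r_c)=F_\varepsilon(u(r_c))>0$ by \eqref{eq-H-positive}. Since $F_\varepsilon$ is even and $F_\varepsilon<0$ on $(0,\beta_\varepsilon)$ by \eqref{eq-F-sign-high}, this gives $\abs{u(r_c)}>\beta_\varepsilon$; with $r_c=0$ it yields $u(0)>\beta_\varepsilon$.

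\emph{Upper bound on $u(0)$.} This only matters when $q>p$, so that $b_\varepsilon<\infty$. Suppose first $u(0)=b_\varepsilon$. Then $f_\varepsilon(b_\varepsilon)=0$, and ODE uniqueness forces $u\equiv b_\varepsilon$, contradicting decay; I expect to justify this uniqueness at the singular point $r=0$ via the standard integral formulation $u'(r)=-r^{1-N}\int_0^r s^{N-1}f_\varepsilon(u)\,ds$. Suppose next $u(0)>b_\varepsilon$. Then $f_\varepsilon(u)<0$ near $0$, and the integral formula gives $u'>0$ as long as $u>b_\varepsilon$, so $u$ increases. It therefore stays above $b_\varepsilon$ forever and cannot tend to $0$, a contradiction. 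Hence $u(0)<b_\varepsilon$.

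\emph{Global maximum at $0$.} For $r>0$ the function $H_\varepsilon$ is strictly decreasing wherever $u'\neq0$. Since $u\not\equiv$ const on any interval, $H_\varepsilon$ is strictly decreasing on $[0,\infty)$. Thus
\[
F_\varepsilon(u(r))\le H_\varepsilon(r)<H_\varepsilon(0)=F_\varepsilon(u(0)).
\]
Because $F_\varepsilon$ is even, negative on $(0,\beta_\varepsilon)$, and strictly increasing on $(a_\varepsilon,b_\varepsilon)\ni u(0)$, we can compare values. If $\abs{u(r)}\ge u(0)$ for some $r>0$, then by continuity and $u(\infty)=0$ there is a first $r$ with $\abs{u(r)}=u(0)$. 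This gives $F_\varepsilon(u(r))=F_\varepsilon(u(0))$, contradicting the strict inequality. This proves \eqref{eq-global-maximum}.

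\emph{One critical point per nodal component.} Let $I$ be a nodal component and assume $u>0$ on $I$; the negative case follows by the symmetry $u\mapsto-u$. A maximum of $\abs{u}$ on $I$ is attained, because $u$ vanishes at the finite endpoints and tends to $0$ at infinity, or because $0\in I$. Its value exceeds $\beta_\varepsilon$ by the first step, giving \eqref{urI-beta}.

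For uniqueness of the critical point, I note that any critical point $r_c\in I$ has $u(r_c)\in(\beta_\varepsilon,u(0))\subset(a_\varepsilon,b_\varepsilon)$, so $f_\varepsilon(u(r_c))>0$ and $u''(r_c)=-f_\varepsilon(u(r_c))<0$. Every critical point in $I$ is therefore a strict local maximum. Two strict local maxima would force a local minimum between them, which is again a critical point and hence a strict maximum, which is impossible. So $u'$ has exactly one zero $r_I$ in $I$.

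At $r=0$ the same reasoning applies, using $u''(0)=-f_\varepsilon(u(0))/N<0$. The main care needed is the ODE-uniqueness step at $u(0)=b_\varepsilon$ and ensuring that every critical value lies in the range where $f_\varepsilon>0$, which is exactly what the upper bound $u(0)<b_\varepsilon$ provides.
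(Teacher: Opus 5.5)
Your proposal is correct and follows essentially the same route as the paper: positivity of $H_\varepsilon$ at critical points gives the lower bound, the integral formula $r^{N-1}u'(r)=-\int_0^r s^{N-1}f_\varepsilon(u(s))\,ds$ gives $u(0)<b_\varepsilon$, the energy comparison $F_\varepsilon(u(r))\le H_\varepsilon(r)<F_\varepsilon(u(0))$ gives the global maximum, and the fact that every critical point in $I$ is a strict local maximum handles the nodal components. The only cosmetic difference is how you get $H_\varepsilon(r)<H_\varepsilon(0)$: you use that $u$ is not constant on any interval, which needs a short ODE-uniqueness remark, whereas the paper gets it more directly from $u''(0)=-f_\varepsilon(u(0))/N<0$.
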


\begin{proof}
Define
\[
 \alpha:=u(0)>0.
\]
It then gives from \eqref{eq-defineH} and \eqref{eq-H-positive} that
\[
 F_\varepsilon(\alpha)=H_\varepsilon(0)>0,
\]
which implies that $\alpha>\beta_\varepsilon$ holds in view of Lemma~\ref{lem-scalar-structure}.
Multiplying   \eqref{eq-radial}
by $r^{N-1}$ and integrating from $0$ to $r$, we obtain that
\begin{equation}
 \label{eq-integral-radial0}
 r^{N-1}u'(r)
 =-\int_0^r s^{N-1}f_\varepsilon(u(s))\,ds,
 \ \   r>0.
\end{equation}
Since the proof of Lemma~\ref{lem-scalar-structure} gives that $b_\varepsilon=+\infty$ holds for the case $q<p$,
in order to  prove \eqref{eq-center-range}, it next suffices to prove $\alpha<b_\varepsilon$ for the case $q>p$.

We now suppose $q>p$.
If $\alpha=b_\varepsilon$, then it yields from Lemma~\ref{lem-scalar-structure} that
$f_\varepsilon(b_\varepsilon)=0$.
Hence, the constant function $u(r)\equiv b_\varepsilon$
is a solution of   \eqref{eq-radial}
satisfying
$u(0)=\alpha$ and $u'(0)=0$.
By the ODE uniqueness theorem, we obtain that
$u(r)\equiv b_\varepsilon$  holds for any $r\ge0$,
which is impossible. If $\alpha>b_\varepsilon$, then the decaying property of
\eqref{eq-radial} implies that there exists $R_1>0$
satisfying $u(R_1)<b_\varepsilon$.
Since $u(0)=\alpha>b_\varepsilon$, the continuity of
$u(r)$ gives that there exists at least one point in $(0,R_1)$
such that $u(r)=b_\varepsilon$.
Since $u(r)$ is continuous on $[0,R_1]$ and
$u(0)>b_\varepsilon>u(R_1)$, we define
\[
 R:=\min\{r\in[0,R_1]:\, u(r)=b_\varepsilon\}.
\]
It then follows that $0<R<R_1$ and
\[
 u(R)=b_\varepsilon,
 \ \ u(r)>b_\varepsilon
 \ \ \text{for any}\ \ 0\le r<R.
\]
Note from \eqref{eq-f-sign-high} that
$f_\varepsilon(u(r))<0$ holds for any $0\le r<R$.
We then deduce from \eqref{eq-integral-radial0} that
$u'(r)>0$ holds for any $0<r\le R$.
Consequently,
\[
 b_\varepsilon=u(R)
 =\alpha+\int_0^R u'(r)\,dr
 >\alpha>b_\varepsilon,
\]
which is a contradiction. We thus have $\alpha<b_\varepsilon$ for the case $q>p$, which
 therefore proves \eqref{eq-center-range}.

By  \eqref{eq-f-sign-high}, \eqref{eq-center-range}
and \eqref{eq-integral-radial0}, we obtain that
\[
 u''(0)
 =\lim_{r\searrow0}\frac{u'(r)}r
 =-\frac1N f_\varepsilon(\alpha)<0.
\]
Hence, the origin $r=0$ is a strict local maximum point of $u(r)$.
Moreover, it yields from \eqref{eq-defineH} and
\eqref{eq-Hamiltonian-derivative} that
\[
 H_\varepsilon(r)
 =F_\varepsilon(\alpha)
  -(N-1)\int_0^r\frac{\abs{u'(s)}^2}{s}\,ds
 <F_\varepsilon(\alpha),
 \ \   r>0,
\]
where the strict inequality follows from the fact that $u''(0)<0$.
If $\abs{u(r)}=\alpha$ holds for some $r>0$, then
we deduce from \eqref{eq-fF} and \eqref{eq-defineH} that
$H_\varepsilon(r)\ge F_\varepsilon(\alpha)$,
which is however a contradiction.
Since $\abs{u(r)}<\alpha$ holds for any sufficiently small
$r>0$,   the continuity of $u(r)$ gives that
\[
 \abs{u(r)}<\alpha
 \ \ \text{for any}\ \, r>0.
\]
Therefore, $r=0$ is the unique global maximum point
of $\abs{u(r)}$ in $[0,+\infty)$, and hence \eqref{eq-global-maximum} holds.

Define
$y(r):=\abs{u(r)}$ for any $r\in I$, where the set $I$ is any connected component of
$\{r\ge0:\,u(r)\neq0\}$.
Since $f_\varepsilon(s)$ is odd in $\R$ and $u(r)$ does not change the sign on $I$, 
we deduce from
\eqref{eq-radial} that
\[
 y''(r)+\frac{N-1}{r}y'(r)+f_\varepsilon(y(r))=0,
 \ \ r\in I\cap(0,+\infty).
\]
Moreover, $y'(r)=0$ if and only if $u'(r)=0$ holds
for any $r\in I$.
For any $r_0\in I\cap(0,+\infty)$ satisfying
$u'(r_0)=0$, we then deduce from \eqref{eq-fF},
\eqref{eq-defineH} and \eqref{eq-H-positive} that
\[
 F_\varepsilon(y(r_0))
 =F_\varepsilon(u(r_0))
 =H_\varepsilon(r_0)>0.
\]
Applying Lemma~\ref{lem-scalar-structure}, we thus derive from \eqref{eq-center-range} and \eqref{eq-global-maximum}
that
\begin{equation}
 \label{eq-critical-point-height}
 \beta_\varepsilon<y(r_0)\le\alpha<b_\varepsilon,
 \ \ f_\varepsilon(y(r_0))>0.
\end{equation}
Since $y'(r_0)=0$, it follows from \eqref{eq-radial} that
\begin{equation}
 \label{eq-allmaximum}
 y''(r_0)=-f_\varepsilon(y(r_0))<0.
\end{equation}
Thus, $r_0$ is a strict local maximum point of $y(r)$,
where $y(r_0)>\beta_\varepsilon$.
If $0\in I$, the same conclusion holds at $r=0$,
since $y(0)=\alpha>\beta_\varepsilon$
and $y''(0)=u''(0)<0$.

We  next claim that $u'(r)$ has at most one zero point in $I$.
On the contrary, suppose
$u'(r_1)=u'(r_2)=0$ holds for some $r_1,r_2\in I$
satisfying $r_1<r_2$.
The preceding arguments show that both $r_1$ and $r_2$
are strict local maximum points of $y(r)$ in $[0, +\infty)$,
including the case $r_1=0$.
Hence, $y(r)$ attains its minimum on $[r_1,r_2]$
at some $r_3\in(r_1,r_2)$, where
\[
 y'(r_3)=0,
 \ \ y''(r_3)\ge0.
\]
This however contradicts \eqref{eq-allmaximum}, and the above claim is proved. 
It remains to prove that $u'(r)$ admits  zero points in $I$.
We deduce from \eqref{eq-radial} that if $0\in I$, then $r=0$ is a zero point of $u'(r)$.
Suppose now that $0\notin I$.
We then have $y(r)>0$ for any $r\in I$.
By the definition of $I$ and the continuity of $u(r)$,
we get that
\[
 \lim_{\substack{r\to a\\ r\in I}}y(r)=0
\]
holds for any finite endpoint $a$ of $I$.
If $I$ is unbounded, then it follows from   \eqref{eq-radial} that $y(r)\to0$
as $r\to+\infty$.
Thus, $y(r)$ attains a positive maximum
at an interior point of $I$, where $u'(r)=0$. We therefore conclude that $u'(r)$ has exactly one zero point in $I$.

Finally, we denote $r_I$ the unique zero point of $u'(r)$ in $I$, where $r_I=0$ if $0\in I$.
We then deduce from \eqref{eq-center-range}
and \eqref{eq-critical-point-height} that
\[
 \abs{u(r_I)}
 =\max_{r\in I}\abs{u(r)}>\beta_\varepsilon.
\]
This proves \eqref{urI-beta}, which completes the proof of Lemma~\ref{lem-center-maximum}. 
\end{proof}

\subsection{Proof of Proposition~\ref{prop-amplitude}}

The purpose of this subsection is to establish Proposition 2.1, for which we first establish the following oscillation lemma.

\begin{lem}
 \label{lem-blowup-oscillation}
For $N\ge3$, $1<p<\frac{N+2}{N-2}$ and $q>1$,
suppose $w(r)$ satisfies
\begin{equation}
 \label{eq-tau-limit-equation}
 \begin{cases}
  w''(r)+\frac{N-1}{r}w'(r)
  +\abs{w(r)}^{p-1}w(r)
  -\tau\abs{w(r)}^{q-1}w(r)=0
  \,\ \ \text{in}\ \, r>0,\\
  w(0)=1,
  \ \ w'(0)=0,
  \ \ \abs{w(r)}\le1 \ \ \text{for any}\ \  r\ge0,
 \end{cases}
\end{equation}
where either $\tau=0$, or $0<\tau<1$ and $q>p$. 
Then $w(r)$ has infinitely many zero points
in $(0,+\infty)$.
\end{lem}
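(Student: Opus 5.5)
We will prove the lemma by contradiction. Suppose $w$ has only finitely many zeros, so that $w$ has a fixed sign on some interval $(R_0,+\infty)$; replacing $w$ by $-w$ if necessary (the nonlinearity is odd), assume $w>0$ there. Set $g(s):=\abs{s}^{p-1}s-\tau\abs{s}^{q-1}s$. For $0<s\le1$, either $\tau=0$, or $\tau<1$ and $q>p$; in both cases $g(s)=s^p(1-\tau s^{q-p})\ge(1-\tau)s^p>0$. Hence $(r^{N-1}w')'=-r^{N-1}g(w)<0$ on $(R_0,\infty)$, so $r^{N-1}w'$ is strictly decreasing there.

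\textbf{Step 1 ($w$ is eventually decreasing and tends to $0$).} If $r^{N-1}w'$ stays nonnegative, then $w$ is increasing and bounded by $1$. Then $g(w)\ge c>0$ on $(R_0+1,\infty)$, so $r^{N-1}w'(r)\le C-cr^N/N\to-\infty$, a contradiction. Hence $w'<0$ for large $r$, and $w\searrow L\in[0,1)$. If $L>0$, the same integration gives $w'(r)\le -c' r$, contradicting boundedness. Thus $w\to0$ monotonically.

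\textbf{Step 2 (the decay rate contradicts the subcritical exponent).} Since $w$ is positive, decreasing, and $g(w)\ge c\,w^p$ with $c=1-\tau$, integrating from $R$ to $r$ gives
\[
-r^{N-1}w'(r)\ge c\int_R^r s^{N-1}w(s)^p\,ds\ge \frac{c}{N}w(r)^p(r^N-R^N).
\]
Hence $-w'/w^p\gtrsim r$, and integration yields $w^{1-p}\gtrsim r^2$, i.e.\ $w(r)\lesssim r^{-2/(p-1)}$. Conversely, $(r^{N-1}w')'<0$ with $w'<0$ gives $-r^{N-1}w'\ge c_0>0$, so $w(r)\ge c_1 r^{2-N}$ for large $r$ (here $N\ge3$ is used). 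Comparing the two bounds forces $2/(p-1)\le N-2$, i.e.\ $p\ge N/(N-2)$. So for $1<p<N/(N-2)$ we already have a contradiction. For $N/(N-2)\le p<(N+2)/(N-2)$, a refined argument is needed, and I expect this to be the main obstacle.

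\textbf{Step 3 (the range $p\ge N/(N-2)$).} For this range I would use a Pohozaev-type identity on $[0,r]$. Define
\[
P(r):=r^N\Big(\tfrac12 w'^2+G(w)\Big)+\tfrac{N-2}{2}r^{N-1}ww',
\qquad G(s):=\int_0^s g .
\]
A direct computation gives $P'(r)=r^{N-1}\big(NG(w)-\tfrac{N-2}{2}wg(w)\big)$. For $\tau=0$ the bracket equals $\big(\tfrac{N}{p+1}-\tfrac{N-2}{2}\big)\abs{w}^{p+1}>0$ by subcriticality. For $\tau>0$ with $q>p$, the $\tau$-term contributes $-\tau\big(\tfrac{N}{q+1}-\tfrac{N-2}{2}\big)\abs{w}^{q+1}$. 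Since $\abs{w}\le1$ and $\tau<1$, I would try to show the bracket stays positive; if that fails, it suffices to have positivity for small $\abs{w}$, because $w\to0$ and the $\abs{w}^{p+1}$ term dominates near $0$. In either case $P$ is eventually increasing with $P(0)=0$, and positivity over the whole range gives $P(r)\ge c>0$ for large $r$. On the other hand, the upper decay bound $w\lesssim r^{-2/(p-1)}$ with $2/(p-1)>(N-2)/2$ (subcriticality), together with $\abs{w'}\lesssim r^{-1}w$-type bounds from Step 2, should force $P(r)\to0$ along a sequence, giving the contradiction. If the global positivity of $P'$ is not available in the $\tau>0$ case, I would instead argue along the tail only: $P(r)-P(R)>0$ grows because $\int^\infty r^{N-1}w^{p+1}$ diverges when $w\gtrsim r^{2-N}$ and $p+1\le 2N/(N-2)$, while the boundary terms of $P$ decay. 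This tail estimate is where I expect the real difficulty.
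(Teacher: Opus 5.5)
\paragraph{Overall.} Your main line is essentially the paper's proof: the same Pohozaev function, the same decay $w\lesssim r^{-2/(p-1)}$ from integrating $(r^{N-1}w')'\le-(1-\tau)r^{N-1}w^p$, and the same conclusion that the boundary terms of $P$ are $O(r^{N-2-4/(p-1)})\to0$ by subcriticality. The gap is the one step where the hypothesis ``$0<\tau<1$ and $q>p$'' actually enters, namely the global positivity of $P'$. You left it as ``I would try to show''. It holds and is short.

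\paragraph{Closing the positivity step.} Write $A_p=\frac{N}{p+1}-\frac{N-2}{2}>0$ and $B_q=\frac{N-2}{2}-\frac{N}{q+1}$, so that $P'=r^{N-1}[A_p\abs{w}^{p+1}+\tau B_q\abs{w}^{q+1}]$. Since $q>p$, you have $A_p+B_q=N(\frac1{p+1}-\frac1{q+1})>0$, i.e.\ $B_q>-A_p$. Together with $\abs{w}^{q+1}\le\abs{w}^{p+1}$ this gives
\[
P'(r)\ge A_p r^{N-1}\bigl(\abs{w}^{p+1}-\tau\abs{w}^{q+1}\bigr)\ge(1-\tau)A_p r^{N-1}\abs{w}^{p+1}\ge0 .
\]
The inequality is strict near $r=0$ because $w(0)=1$, so $P(0)=0$ yields $P(r)\ge P(1)>0$ for $r\ge1$.

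\paragraph{The derivative bound.} Step 2 only gives a \emph{lower} bound on $\abs{w'}$, not the $\abs{w'}\lesssim w/r$ you invoke. You get the upper bound from the monotonicity of $A(r)=r^{N-1}w'(r)<0$: since $\abs{w'(s)}\ge\abs{A(r)}s^{1-N}$ for $s\ge r$,
\[
w(r)\ge w(r)-w(2r)\ge\abs{A(r)}\int_r^{2r}s^{1-N}\,ds .
\]
Alternatively, the mean value theorem on $[r,2r]$ gives the bound along a sequence, which is all the contradiction needs.

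\paragraph{What to discard.} The split at $p=N/(N-2)$ and the lower bound $w\gtrsim r^{2-N}$ are superfluous, since the Pohozaev argument covers all $1<p<\frac{N+2}{N-2}$ at once. Both fallbacks in Step 3 fail and should be dropped.
\begin{itemize}
\item Positivity of $P'$ only for small $\abs{w}$ makes $P$ eventually increasing. But it gives no sign for $P$ at the start of the tail, so combined with $P\to0$ along a sequence there is no contradiction.
\item The tail divergence claim uses the wrong exponent. With $w\gtrsim r^{2-N}$, the integral $\int^\infty r^{N-1}w^{p+1}\,dr$ is forced to diverge only when $(N-2)(p+1)\le N$, i.e.\ $p+1\le N/(N-2)$, not $p+1\le 2N/(N-2)$. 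This never holds in the range $p\ge N/(N-2)$ where you meant to use it.
\end{itemize}
With the global positivity above, neither fallback is needed.
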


\begin{proof}
Define
\[
 G_\tau(s)
 :=\frac{\abs{s}^{p+1}}{p+1}
 -\frac{\tau\abs{s}^{q+1}}{q+1},
 \ \ s\in\mathbb R,
\]
and
\begin{equation}
 \label{eq-tau-pohozaev}
 \mathcal P_\tau(r)
 :=r^N\left[\frac12\abs{w'(r)}^2+G_\tau(w(r))\right]
 +\frac{N-2}{2}r^{N-1}w(r)w'(r),
 \ \ r\ge0.
\end{equation}
It then follows from \eqref{eq-tau-limit-equation} and
\eqref{eq-tau-pohozaev} that
\begin{equation}
 \label{eq-tau-pohozaev-derivative}
 \mathcal P_\tau'(r)
 =r^{N-1}\left[
 A_p\abs{w(r)}^{p+1}
 +\tau B_q\abs{w(r)}^{q+1}
 \right],
 \ \ r>0,
\end{equation}
where
\begin{equation}
 \label{eq-AqBp}
 A_p:=\frac{N}{p+1}-\frac{N-2}{2}>0,
 \ \ B_q:=\frac{N-2}{2}-\frac{N}{q+1}.
\end{equation}
We first prove that for either $\tau=0$ or
$0<\tau<1$ and $q>p$,
\begin{equation}
 \label{eq-P-positive}
 \mathcal P_\tau'(r)\ge0,
 \ \ \mathcal P_\tau(r)>0
 \ \ \text{for any}\ \ r>0.
\end{equation}
For the case $\tau=0$, it directly follows from
\eqref{eq-tau-pohozaev-derivative} that
\begin{equation}
\label{eq-P0-derivative-bound}
\mathcal P_0'(r)
 =A_pr^{N-1}\abs{w(r)}^{p+1}\ge0,
 \ \ r>0.
\end{equation}
As for the case $0<\tau<1$ and $q>p$, we deduce from
\eqref{eq-AqBp} that
\begin{align}
\label{ApBq}
A_p+B_q
  =N\left(\frac1{p+1}-\frac1{q+1}\right)>0.
\end{align}
Moreover, since $q>p$ and $\abs{w(r)}\le1$, we have
\[
 \abs{w(r)}^{q+1}
 \le \abs{w(r)}^{p+1},
 \ \ r>0.
\]
It then follows from \eqref{eq-tau-pohozaev-derivative} and \eqref{ApBq} that
\begin{equation}
\label{eq-Ptau-derivative-bound}
\begin{aligned}
  \mathcal P_\tau'(r)
  &\ge A_pr^{N-1}\left[
  \abs{w(r)}^{p+1}-\tau\abs{w(r)}^{q+1}
  \right]\\
  &\ge(1-\tau)A_pr^{N-1}\abs{w(r)}^{p+1}\ge0, \ \ r>0. 
 \end{aligned}
\end{equation}
We thus obtain that for either case of $\tau$, 
\begin{equation}
\label{P'>0}
\mathcal P_\tau'(r)\ge0, \ \ r>0. 
\end{equation}
Since $w(0)=1$ and $w(r)$ is continuous on $[0,+\infty)$,
we deduce from \eqref{eq-P0-derivative-bound}
and \eqref{eq-Ptau-derivative-bound} that
$\mathcal P_\tau'(r)>0$ holds
for any sufficiently small $r>0$.
Noting that
$\mathcal P_\tau(0)=0$, 
we then obtain from \eqref{P'>0} that for either case of $\tau$, 
\[
 \mathcal P_\tau(r)
 =\int_0^r\mathcal P_\tau'(s)\,ds>0,
 \ \ r>0,
\]
which proves \eqref{eq-P-positive}.

To prove Lemma~\ref{lem-blowup-oscillation}, 
on the contrary, suppose that $w(r)$ has only finitely
many zero points in $(0,+\infty)$.
Then there exists $R_0>0$ satisfying
$w(r)\neq0$ for any $r\ge R_0$.
Define for $r\ge R_0$, 
\[
 y(r):=\abs{w(r)},
 \ \ A(r):=r^{N-1}y'(r),
 \ \ r\ge R_0.
\]
For either case of $\tau$, we have
\[
 (1-\tau)s^p\le s^p-\tau s^q\le s^p
 \ \ \text{for any}\ \ s\in(0,1].
\]
Since $w(r)$ does not change the sign on $[R_0,+\infty)$,
we deduce from \eqref{eq-tau-limit-equation} that
\begin{equation}
 \label{eq-dAr}
 A'(r)
 =-r^{N-1}[y(r)^p-\tau y(r)^q]
 \le-(1-\tau)r^{N-1}y(r)^p<0,\ \ r\ge R_0.
\end{equation}
We claim that $A(R_1)<0$ holds for some $R_1\ge R_0$.
On the contrary, suppose $A(r)\ge0$ holds
for any $r\ge R_0$.
Then $y'(r)\ge0$ and $y(r)\ge y(R_0)>0$ hold
for any $r\ge R_0$.
Integrating \eqref{eq-dAr} over $[R_0,r]$, we thus obtain that
\[
 \begin{aligned}
  A(r)
  &\le A(R_0)
  -(1-\tau)\int_{R_0}^r s^{N-1}y(s)^p\,ds\\
  &\le A(R_0)
  -\frac{(1-\tau)y(R_0)^p}{N}(r^N-R_0^N)
  \to-\infty
  \ \ \text{as}\ \ r\to+\infty,
 \end{aligned}
\]
which however contradicts the assumption that $A(r)\ge0$ holds for any $r\ge R_0$. This proves the claim that $A(R_1)<0$ holds for some $R_1\ge R_0$.
We then deduce from \eqref{eq-dAr} that
$A(r)\le A(R_1)<0$ holds for any $r\ge R_1$.
Therefore, $y(r)$ is strictly decreasing
on $[R_1,+\infty)$.

Define
\begin{equation}
 \label{eq-define}
 c_0:=\frac{(1-\tau)(1-2^{-N})}{N}>0.
\end{equation}
For any $r\ge2R_1$, integrating \eqref{eq-dAr}
over $[r/2,r]$, we deduce that
\[
 \begin{aligned}
  -r^{N-1}y'(r)
  &=-A(r/2)
  +\int_{r/2}^r s^{N-1}
  [y(s)^p-\tau y(s)^q]\,ds\\
  &\ge(1-\tau)y(r)^p
  \int_{r/2}^r s^{N-1}\,ds
  =c_0r^Ny(r)^p,
 \end{aligned}
\]
where we have used $A(r/2)<0$, together with the monotonicity
of $y(r)$ on $[R_1,+\infty)$.
It then follows that
\[
 [y(r)^{1-p}]'
 =-(p-1)y(r)^{-p}y'(r)
 \ge(p-1)c_0r,\ \ r\ge2R_1.
\]
Integrating the above inequality over $[2R_1,r]$, we obtain that
\[
 y(r)^{1-p}
 \ge y(2R_1)^{1-p}
 +\frac{(p-1)c_0}{2}(r^2-4R_1^2),
\]
which implies that
\begin{equation}
 \label{eq-slow-decay}
 y(r)\le Cr^{-\frac2{p-1}}, 
 \ \ r\ge4R_1,
\end{equation}
where $C>0$ is independent of $r>0$.
For any $r\ge R_1$, we deduce from \eqref{eq-dAr} that
\[
 \begin{aligned}
  y(r)
  &\ge y(r)-y(2r) =-\int_r^{2r}A(s)s^{1-N}\,ds\\
  &\ge[-A(r)]\int_r^{2r}s^{1-N}\,ds=\frac{1-2^{2-N}}{N-2}\,r\abs{y'(r)}.
 \end{aligned}
\]
Applying this estimate, it thus yields from \eqref{eq-slow-decay} that
\begin{equation}
 \label{eq-slow-gradient}
 \abs{y'(r)}
 \le\frac{N-2}{1-2^{2-N}}\frac{y(r)}r
 \le Cr^{-1-\frac2{p-1}},\ \ r\ge4R_1,
\end{equation}
where $C>0$ is independent of $r>0$.

By the definition of $G_\tau(s)$, we have
\[
 0\le G_\tau(w(r))
 =G_\tau(y(r))
 \le\frac{y(r)^{p+1}}{p+1},\ \ r\ge R_0.
\]
Substituting \eqref{eq-slow-decay} and
\eqref{eq-slow-gradient} into \eqref{eq-tau-pohozaev} gives that
\[
 \begin{aligned}
  \abs{\mathcal P_\tau(r)}
  &\le C\left[
  r^N\abs{y'(r)}^2
  +r^Ny(r)^{p+1}
  +r^{N-1}y(r)\abs{y'(r)}
  \right]\\
  &\le Cr^{N-2-\frac4{p-1}}
  \to0
  \ \ \text{as}\ \ r\to+\infty,
 \end{aligned}
\]
since $p<\frac{N+2}{N-2}$.
On the other hand, we derive from \eqref{eq-P-positive} that
\[
 \mathcal P_\tau(r)\ge\mathcal P_\tau(1)>0
 \ \ \text{for any}\ \, r\ge1,
\]
 a contradiction.
This therefore completes the proof of Lemma \ref{lem-blowup-oscillation}.
\end{proof}

Applying
Lemma~\ref{lem-blowup-oscillation}, we now prove Proposition \ref{prop-amplitude}.

\begin{proof}[Proof of Proposition \ref{prop-amplitude}]
 In view of Lemma~\ref{lem-center-maximum}, it suffices  
 to prove that $u_\varepsilon(0)$
is uniformly bounded for any sufficiently small
$\varepsilon>0$.
On the contrary, suppose that there exist
$\varepsilon_n\in(0,\varepsilon_*)$, where $\varepsilon_n \searrow 0$ as $n \to \infty$, and a bound state
$u_n:=u_{\varepsilon_n}\in\mathcal B_{\varepsilon_n,k}$
of \eqref{mainequation} satisfying
\[
 \ \ \alpha_n:=u_n(0)\to\infty
 \ \ \text{as}\ \ n\to\infty.
\]
Define
\begin{equation}
 \label{eq-blowup-scaling}
 \begin{aligned}
  w_n(r)
  &:=\frac1{\alpha_n}
  u_n\left(\alpha_n^{-\frac{p-1}{2}}r\right),
  \ \ r\ge0,\\
  \delta_n^2
  &:=\alpha_n^{1-p}>0, \ \
    \tau_n
  :=\varepsilon_n\alpha_n^{q-p}>0,
 \end{aligned}
\end{equation}
so that $\delta_n^2\to0$ as $n\to\infty$. Applying Lemma~\ref{lem-center-maximum},
it then follows from \eqref{eq-radial} and \eqref{eq-blowup-scaling} that
$w_n(r)$ is a solution of
\begin{equation}
\begin{split}
 \label{eq-blowup-equation}
 &w_n''(r)+\frac{N-1}{r}w_n'(r)
 -\delta_n^2w_n(r)
 \\
 +&\abs{w_n(r)}^{p-1}w_n(r)
 -\tau_n\abs{w_n(r)}^{q-1}w_n(r)=0,
 \ \ r>0,
 \end{split}
\end{equation}
satisfying
\begin{equation}
 \label{eq-blowup-data}
 w_n(0)=1,
 \ \ w_n'(0)=0,
 \ \ \abs{w_n(r)}\le1
 \ \ \text{for any}\ \, r\ge0.
\end{equation}
Moreover, $w_n(r)$ has exactly $k$ zero points
in $(0,+\infty)$, and
\begin{equation}
 \label{eq-blowup-decay}
 w_n(r)\to0
 \ \ \text{as}\ \ r\to+\infty.
\end{equation}

By Lemmas~\ref{lem-scalar-structure} and
\ref{lem-center-maximum}, we have
$f_{\varepsilon_n}(\alpha_n)>0$.
It then follows from \eqref{eq-fF} and
\eqref{eq-blowup-scaling} that
\[
 0<
 \frac{f_{\varepsilon_n}(\alpha_n)}{\alpha_n^p}
 =1-\delta_n^2-\tau_n.
\]
Consequently,
\begin{equation}
 \label{eq-tau-range}
 0<\tau_n<1-\delta_n^2<1.
\end{equation}
Up to a subsequence if necessary, we may assume that
\begin{equation}
 \label{2:eq-tau-range}
 \tau_n\to\tau\in[0,1]
 \ \ \text{as}\ \ n\to\infty.
\end{equation}
By the continuous dependence of solutions for
\eqref{eq-blowup-equation} and \eqref{eq-blowup-data}
on the parameters $\delta_n^2$ and $\tau_n$, we then obtain that
\begin{equation}
 \label{eq-blowup-local-convergence}
 w_n\to w
 \,\ \ \text{in}\ \ C^2_{\mathrm{loc}}\bigl([0,+\infty)\bigr)
 \ \ \text{as}\ \ n\to\infty,
\end{equation}
where $w(r)$ solves \eqref{eq-tau-limit-equation}
with $\tau=\lim_{n\to\infty}\tau_n\in[0,1]$.
In order to finish the proof, in view of (\ref{2:eq-tau-range}), we next need to derive a contradiction by considering separately the following three cases:

\textit{Case 1: $q<p$.}
For this case, we obtain from  \eqref{eq-blowup-scaling} that
\[
 \tau_n
 =\frac{\varepsilon_n}{\alpha_n^{p-q}}\to0
 \ \ \text{as}\ \ n\to\infty,
\]
and hence $\tau=0$. It then yields from
Lemma~\ref{lem-blowup-oscillation} that
$w(r)$ has infinitely many zero points in $(0,+\infty)$.
Applying \eqref{eq-tau-limit-equation} and
the ODE uniqueness theorem, we then conclude that all these zero points are simple.
We can therefore choose $k+1$ pairwise disjoint intervals
$[a_j,b_j]\subset(0,+\infty)$ such that
\[
 w(a_j)w(b_j)<0,
 \ \ j=1,\cdots,k+1.
\]
We then deduce from \eqref{eq-blowup-local-convergence} that
\[
 w_n(a_j)w_n(b_j)<0,
 \ \ j=1,\cdots,k+1,
\]
where $n\in \mathbb{N}$ is sufficiently large.
The intermediate value theorem then yields that
$w_n(r)$ has at least one zero point in each interval
$(a_j,b_j)$.
Thus, $w_n(r)$ has at least $k+1$ distinct zero points
in $(0,+\infty)$, which however contradicts the fact that
$w_n(r)$ has exactly $k$ zero points in $(0,+\infty)$.
This proves \eqref{eq-amplitude-bound-qgeqp} for Case 1.

\textit{Case 2: $q>p$ and $0\le \tau<1$.}
For this case,  by Lemma~\ref{lem-blowup-oscillation} and
the ODE uniqueness theorem, $w(r)$ has infinitely many
simple zero points in $(0,+\infty)$.
Repeating the argument as above Case 1 with the first
$k+1$ zero points of $w(r)$, we then deduce from
\eqref{eq-blowup-local-convergence} that for any sufficiently large $n>0$, $w_n(r)$
has at least $k+1$ zero points in $(0,+\infty)$, 
which is again a contradiction. 
This proves \eqref{eq-amplitude-bound-qgeqp} for Case 2.

\textit{Case 3: $q>p$ and $\tau=1$.}
For this case, since the constant function $w(r)\equiv1$ satisfies
\eqref{eq-tau-limit-equation} with $\tau=1$,
 the ODE uniqueness theorem yields that
$w(r)\equiv 1$ holds for any $r\ge0$.
It then follows from \eqref{eq-blowup-local-convergence} that
\begin{equation}
 \label{eq-plateau-local}
 w_n\to1
 \,\ \ \text{in}\ \ C^2_{\mathrm{loc}}\bigl([0,+\infty)\bigr)
 \ \ \text{as}\ \ n\to\infty. 
\end{equation}
Define  
\begin{equation}
 \label{eq-fnFn}
 \begin{aligned}
  f_n(s)
  &:=-\delta_n^2s+\abs{s}^{p-1}s
  -\tau_n\abs{s}^{q-1}s,\\
  F_n(s)
  &:=-\frac{\delta_n^2s^2}{2}
  +\frac{\abs{s}^{p+1}}{p+1}
  -\frac{\tau_n\abs{s}^{q+1}}{q+1},
 \end{aligned}
\end{equation}
where $0<\delta_n^2<1$ and $0<\tau_n<1$ are defined by
\eqref{eq-blowup-scaling}.
We also define the energy function of
\eqref{eq-blowup-equation} by
\begin{equation}
 \label{eq-En}
 E_n(r):=\frac12\abs{w_n'(r)}^2+F_n(w_n(r)),
 \ \ r\ge0.
\end{equation}
By \eqref{eq-defineH} and \eqref{eq-blowup-scaling}, we then deduce from
\eqref{eq-fnFn} and \eqref{eq-En} that
\begin{equation}
 \label{eq-energy-scaling}
 E_n(r)
 =\alpha_n^{-(p+1)}
 H_{\varepsilon_n}
 \left(\alpha_n^{-\frac{p-1}{2}}r\right),
 \ \ r\ge0,
\end{equation}
where $H_{\varepsilon_n}(r)$ is the energy function
associated with $u_n(r)$.
It then follows from \eqref{eq-Hamiltonian-derivative},
\eqref{eq-H-positive} and \eqref{eq-energy-scaling}
that
\begin{equation}
 \label{eq-En-properties}
 \begin{aligned}
  E_n'(r)
  &=-\frac{N-1}{r}\abs{w_n'(r)}^2\le0
  \ \ \text{for any}\ \ r>0,\\
  E_n(r)&>0
  \ \ \text{for any}\ \ r\ge0,
  \ \ \lim_{r\to+\infty}E_n(r)=0.
 \end{aligned}
\end{equation}

For any fixed $\vartheta\in(0,1)$, we now define
\begin{equation}
 \label{eq-departure-radius}
 R_n:=\inf\{r>0:\, w_n(r)=\vartheta\}.
\end{equation}
By the continuity of $w_n(r)$, we then derive from \eqref{eq-blowup-data} and \eqref{eq-blowup-decay} that
\[
 0<R_n<+\infty,
 \ \ w_n(R_n)=\vartheta,
 \ \ w_n(r)>\vartheta
 \ \ \text{for any}\ \ 0\le r<R_n.
\]
It then yields from \eqref{eq-plateau-local} that for fixed $R>0$ and any sufficiently large $n>0$, 
\[
 \min_{0\le r\le R}w_n(r)>\vartheta, 
\]
and hence $R_n>R$.
Since $R>0$ is arbitrary, we conclude that
\begin{equation}
 \label{eq-departure-radius-limit}
 R_n\to+\infty
 \ \ \text{as}\ \ n\to\infty.
\end{equation}
Since $q>p$, $0<\vartheta<1$,
$\delta_n^2\to0$ and $\tau_n\to1$ as $n\to\infty$,
we deduce from \eqref{eq-fnFn} that
\[
 F_n(\vartheta)\to
 \frac{\vartheta^{p+1}}{p+1}
 -\frac{\vartheta^{q+1}}{q+1}>0
 \ \ \text{as}\ \ n\to\infty.
\]
It then follows from \eqref{eq-En} and
\eqref{eq-departure-radius} that for any sufficiently large $n>0$,
\begin{equation}
 \label{eq-energy-departure}
 E_n(R_n)\ge F_n(\vartheta)\ge c_1>0, 
\end{equation}
where
\[
 c_1:=\frac12\left[
 \frac{\vartheta^{p+1}}{p+1}
 -\frac{\vartheta^{q+1}}{q+1}
 \right]>0
\]
is independent of $n>0$.

On the other hand, we deduce from
\eqref{eq-blowup-data}, \eqref{eq-fnFn},
\eqref{eq-En} and \eqref{eq-En-properties} that
\[
  \frac12\abs{w_n'(r)}^2
   \le F_n(1)-F_n(w_n(r)) 
   \le\frac{1-\abs{w_n(r)}^{p+1}}{p+1}
  \le\frac1{p+1},\ \ r\ge 0.
\]
Consequently,
\begin{equation}
 \label{eq-wn-derivative-bound}
 \abs{w_n'(r)}\le L,\ \ r\ge0,
\end{equation}
where $L:=\left(\frac2{p+1}\right)^{\frac12}>0$
is independent of $n>0$ and $r>0$.
Suppose $R_n\le a<b\le+\infty$ and
$w_n(r)$ is monotone on $[a,b)$.
We then deduce from \eqref{eq-blowup-data},
\eqref{eq-En-properties}
and \eqref{eq-wn-derivative-bound} that
\begin{equation}
\label{eq-half-cycle-loss}
\begin{aligned}
  E_n(a)-E_n(b)
  &=(N-1)\int_a^b\frac{\abs{w_n'(r)}^2}{r}\,dr\\
  &\le\frac{(N-1)L}{R_n}
  \int_a^b\abs{w_n'(r)}\,dr\\
  &\le\frac{2(N-1)L}{R_n}=:\rho_n,
 \end{aligned}
\end{equation}
where $\rho_n\to0$ as $n\to\infty$
in view of \eqref{eq-departure-radius-limit}.
Here, if $b=+\infty$, then we define
$E_n(b):=\lim\limits_{r\to+\infty}E_n(r)=0$.

It follows from \eqref{eq-fnFn}, \eqref{eq-En} and
\eqref{eq-En-properties} that
\[
 \frac12\abs{w_n'(r)}^2=E_n(r)>0
\]
holds at any zero point $r>0$ of $w_n(r)$.
Thus, $w_n(r)$ and $w_n'(r)$ have no common zero point.
It then yields  from Lemma~\ref{lem-center-maximum} and
\eqref{eq-blowup-scaling} that $w_n'(r)$ has exactly
$k$ zero points in $(0,+\infty)$.
Define $\xi_{0,n}:=R_n$, and denote
$\xi_{1,n}<\cdots<\xi_{k,n}$ the positive
zero points of $w_n'(r)$. 
Since $R_n$ belongs to the connected component of
$\{r\ge0:\, w_n(r)\neq0\}$ containing the origin,
it implies from Lemma~\ref{lem-center-maximum} that
all these positive zero points belong to $(R_n,+\infty)$.
Therefore, $w_n(r)$ is monotone on  $[\xi_{k,n},+\infty)$ and
$[\xi_{j,n},\xi_{j+1,n}]$ for each integer $j$
satisfying $0\le j<k$.
We then obtain from \eqref{eq-energy-departure} and
\eqref{eq-half-cycle-loss} that
\begin{equation}
 \label{lossenergy}
 \begin{aligned}
  0<c_1
  \le E_n(R_n)
  &=\sum_{j=0}^{k-1}
  [E_n(\xi_{j,n})-E_n(\xi_{j+1,n})]
  +E_n(\xi_{k,n})\\
  &\le(k+1)\rho_n\to0
  \ \ \text{as}\ \ n\to\infty.
 \end{aligned}
\end{equation}
This is a contradiction, which therefore proves \eqref{eq-amplitude-bound-qgeqp} for Case 3.
This completes the proof of
Proposition~\ref{prop-amplitude}.
\end{proof}

\section{Convergence of bound states as
\texorpdfstring{$\varepsilon\searrow0$}{epsilon decreases to 0}}
\label{sec-noescape}

The purpose of this section is to address the proof of Theorem~\ref{thm-main}.
 We shall first prove that all zero points
of the $k$-node bound state $u_\varepsilon(r)\in\mathcal B_{\varepsilon,k}$ for \eqref{mainequation} remain bounded uniformly
as $\varepsilon\searrow0$.
We then establish the uniform $H^1(\R^N)$ bound
of the $k$-node bound state $u_\varepsilon(r)$ and its uniform
convergence to $Q_k(r)$ in
$H^1(\R^N)\cap L^\infty(\R^N)$
as $\varepsilon\searrow0$,
where $Q_k(r)$ is the unique  $k$-node bound state
of \eqref{eq-limit} satisfying $Q_k(0)>0$.

For each fixed integer $k\ge1$, suppose that
$u_n\in\mathcal B_{\varepsilon_n,k}$ is a $k$-node bound state of \eqref{mainequation}, 
where $\varepsilon_n\in(0,\varepsilon_*)$ satisfies
$\varepsilon_n\searrow0$ as $n\to \infty$. 
It follows from Proposition~\ref{prop-amplitude}
that the sequence $\{u_n\}$ is uniformly bounded
in $L^\infty(\R^N)$ as $n \to \infty$. 
Up to a subsequence if necessary, then we may assume that
there exists $\alpha_* \geq 0$ such that
\begin{equation}
 \label{eq-alpha-limit}
 \alpha_n:=u_n(0)\to\alpha_*
 \ \ \text{as}\ \ n\to\infty.
\end{equation}
Applying the continuous dependence of solutions for \eqref{eq-radial}
on the initial value $u_n(0)=\alpha_n$
and the parameter $\varepsilon_n$,
 we deduce from
\eqref{eq-alpha-limit} that
\begin{equation}
 \label{eq-local-limit}
 u_n \to V
 \,\ \ \text{in}\ \ C^2_{\mathrm{loc}}\bigl([0,+\infty)\bigr)
 \ \ \text{as}\ \ n\to\infty,
\end{equation}
where $V=V(r)$ satisfies the following initial value problem
\[
 \begin{cases}
  V''+\frac{N-1}{r}V'-V+\abs{V}^{p-1}V=0
  \,\ \ \text{in}\ \ r>0,\\
  V(0)=\alpha_*,
  \ \ V'(0)=0.
 \end{cases}
\]

Recall that
$Q_0(r)$ denotes
the unique positive solution of \eqref{eq-limit} centered at the origin.
We first identify the limit function $V(r)$ of \eqref{eq-local-limit} in the following lemma.

\begin{lem}
 \label{lem-identification}
 For each integer $k\ge1$, suppose
 $u_n\in\mathcal B_{\varepsilon_n,k}$ is a $k$-node bound state of \eqref{mainequation},
 where $\varepsilon_n\in(0,\varepsilon_*)$ satisfies
 $\varepsilon_n\searrow0$ as $n\to\infty$.
 Then the limit function $V(r)$ in \eqref{eq-local-limit}
 satisfies
 \begin{equation}
  \label{eq-limit-Qm}
  V(r)\equiv Q_m(r), 
  \ \ r\ge0,
 \end{equation}
 where the integer $m$ satisfies $0\le m\le k$,  and $Q_m$ is an $m$-node bound state of \eqref{eq-limit}. 
\end{lem}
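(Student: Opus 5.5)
The plan is to show three things about $V$: it decays to $0$ at infinity, it has at most $k$ zeros, and it is nontrivial. Tang's uniqueness theorem \cite{tang2026uniqueness} for \eqref{eq-limit}, together with uniqueness of the positive solution $Q_0$, then identifies $V$ with $Q_m$, where $m$ is the number of zeros of $V$.

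\emph{Nontriviality and $\alpha_*>\beta_0$.} By Lemma~\ref{lem-center-maximum}, $\alpha_n>\beta_{\varepsilon_n}$, and $\beta_{\varepsilon_n}\to\beta_0>1$ by Lemma~\ref{lem-scalar-structure}. Hence $\alpha_*\ge\beta_0$. If $\alpha_*=\beta_0$, then the limit energy $H_0(r)=\frac12|V'|^2+F_0(V)$ satisfies $H_0(0)=F_0(\beta_0)=0$ and $H_0'\le0$. Since $V''(0)=-f_0(\beta_0)/N<0$, the energy strictly decreases, so $H_0(r)<0$ for $r>0$. I would rule this out with a compactness argument. Because $H_{\varepsilon_n}>0$ by \eqref{eq-H-positive} and $H_{\varepsilon_n}\to H_0$ locally uniformly (via \eqref{eq-local-limit} and $F_{\varepsilon_n}\to F_0$ locally uniformly on bounded sets, using Proposition~\ref{prop-amplitude}), we get $H_0\ge0$, a contradiction. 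Thus $\alpha_*>\beta_0$, and the same limit argument gives $H_0(r)\ge 0$ for all $r\ge0$ with $H_0$ nonincreasing.

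\emph{Zero count.} Zeros of $V$ are simple: at a zero $r_0$ of $V$ with $V'(r_0)=0$, ODE uniqueness would force $V\equiv0$, which is impossible since $V(0)=\alpha_*>0$. If $V$ had at least $k+1$ zeros, then by the sign-change argument of Case~1 in the proof of Proposition~\ref{prop-amplitude} (intervals $[a_j,b_j]$ with $V(a_j)V(b_j)<0$), $u_n$ would have at least $k+1$ zeros for large $n$, which is a contradiction. So $V$ has $m\le k$ zeros.

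\emph{Decay, the main obstacle.} Let $R_0$ be beyond the last zero of $V$, and set $y=|V|>0$ on $[R_0,\infty)$. The function $H_0$ is nonincreasing and $\ge0$, so it has a limit $h\ge0$, and $V,V'$ are bounded. I would first show that $y$ is eventually monotone. Arguing as in Lemma~\ref{lem-center-maximum}, every critical point of $y$ with $H_0>0$ is a strict local maximum where $y>\beta_0$, since $F_0(y)=H_0>0$ there. Between two maxima there would have to be a local minimum at which $F_0(y)=H_0>0$, which forces it to be a maximum as well. Hence $y$ has at most one critical point beyond $R_0$ and is eventually monotone, converging to some $\ell\ge0$. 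Then $V'\to0$ (bounded $V''$ plus convergence of $V$), and the equation forces $f_0(\ell)=0$, so $\ell\in\{0,1\}$.

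The case $\ell=1$ must be excluded. Two routes seem available. First, $F_0(1)<0$ contradicts $H_0\ge0$, because $H_0\to F_0(1)<0$. Failing that, linearization at $1$ gives oscillation with frequency $\sqrt{p-1}$, contradicting monotonicity. Hence $\ell=0$. With $V\to0$ and $V'\to0$, the standard exponential decay for $-\Delta V+V=|V|^{p-1}V$ gives $V\in H^1\cap L^\infty$.

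\emph{Conclusion.} Thus $V$ is a radial $H^1\cap L^\infty$ solution of \eqref{eq-limit} with $V(0)>0$ and exactly $m\le k$ nodes. If $m=0$, uniqueness of positive solutions \cite{kwong1989uniqueness} gives $V=Q_0$. If $m\ge1$, Tang's theorem gives $V=Q_m$.
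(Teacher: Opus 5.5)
Your proposal is correct, but it reaches the identification by a different route from the paper's. The paper passes to the limit in \eqref{eq-H-positive} to get $H_{0,V}\ge0$ and then invokes Tang's classification of the shooting problem (\cite[Theorem~2 and Proposition~2.4(iv)]{tang2026uniqueness}). That result says either $V\equiv Q_m$ for some $m\ge0$, or $V$ eventually has a fixed sign and oscillates about $\pm1$. The paper rules out the second alternative by evaluating $H_{0,V}$ at a critical point where $0<\abs{V}<1$, and only afterwards obtains $m\le k$ by counting sign changes.

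You use the same inequality $H_0\ge0$ but prove decay by hand instead of citing the dichotomy. You count zeros first, so that a last zero exists. You then rerun the critical-point argument of Lemma~\ref{lem-center-maximum} for the limit equation: with $y=\abs{V}$, any critical point has $F_0(y)=H_0\ge0$, hence $y\ge\beta_0>1$ and $y''<0$. This gives eventual monotonicity and convergence to a zero $\ell\in\{0,1\}$ of $f_0$, and $\ell=1$ is excluded by $F_0(1)<0$. After that you need only the bare uniqueness theorems: Tang's for $m\ge1$ and Kwong's for $m=0$.

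The paper's proof is therefore shorter but rests on a finer structural result of Tang. Yours is self-contained up to uniqueness, at the cost of a few routine steps ($V'\to0$ from bounded $V''$, then exponential decay).

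Two of your steps are superfluous. The strict bound $\alpha_*>\beta_0$ is not needed, since the critical-point argument uses only $H_0\ge0$. The linearization alternative for excluding $\ell=1$ is also unnecessary, because $F_0(1)<0$ already settles it.
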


\begin{proof}
We deduce from \eqref{eq-root-limits},
\eqref{eq-center-range} and \eqref{eq-alpha-limit} that
\[
 \alpha_*
 =\lim_{n\to\infty}\alpha_n
 \ge\lim_{n\to\infty}\beta_{\varepsilon_n}
 =\beta_0=\left(\frac{p+1}{2}\right)^{\frac1{p-1}}>1.
\]
Since $V(0)=\alpha_*>1$, 
$V(r)$ is not a constant in $[0,+\infty)$.
Setting $n\to\infty$ for \eqref{eq-H-positive},
it then follows from \eqref{eq-fF}, \eqref{eq-defineH}
and \eqref{eq-local-limit} that
\begin{equation}
 \label{eq-limit-H-positive}
 H_{0,V}(r)
 :=\frac12\abs{V'(r)}^2+F_0(V(r))\ge0, 
\ \ r\ge0.
\end{equation}
We conclude from
\cite[Theorem~2 and Proposition~2.4(iv)]{tang2026uniqueness}
that either $V(r)\equiv Q_m(r)$
for some integer $m\ge0$,
or $V(r)$ has a fixed sign and oscillates about
$\pm1$ for any sufficiently large $r>0$.
In the latter case, there exists $r_0>0$ satisfying
\[
 V'(r_0)=0,
 \ \ 0<\abs{V(r_0)}<1.
\]
It then yields from \eqref{eq-F0}
and \eqref{eq-limit-H-positive} that for $p>1$, 
\[
 \begin{aligned}
  H_{0,V}(r_0)
  &=F_0(V(r_0))
  =-\frac{\abs{V(r_0)}^2}{2}
  +\frac{\abs{V(r_0)}^{p+1}}{p+1}<0,
 \end{aligned}
\]
which however contradicts \eqref{eq-limit-H-positive}.
This implies that $V(r)\equiv Q_m(r)$ holds
for some integer $m\ge0$.

It remains to prove that $0\leq m\le k$.
On the contrary, suppose $m>k$.
Since $Q_m(r)$ is a nontrivial radial solution
of \eqref{eq-limit}, the ODE uniqueness theorem implies that
any zero point of $Q_m(r)$ in $(0,+\infty)$ is simple.
We can therefore choose $k+1$ pairwise disjoint intervals
$[a_j,b_j]\subset(0,+\infty)$ such that
\[
 Q_m(a_j)Q_m(b_j)<0,
 \ \ j=1,\cdots,k+1.
\]
It then follows from
\eqref{eq-local-limit} that
\[
 u_n(a_j)u_n(b_j)<0,
 \ \ j=1,\cdots,k+1, 
\]
where $n>0$ is sufficiently large. 
The intermediate value theorem yields that
$u_n(r)$ has at least one zero point in each interval
$(a_j,b_j)$.
Thus, 
$u_n(r)$ has at least $k+1$ distinct zero points
in $(0,+\infty)$, which however contradicts the fact that
$u_n\in\mathcal B_{\varepsilon_n,k}$ holds for any sufficiently large $n>0$. 
We therefore conclude that $0 \leq m\le k$.
This proves \eqref{eq-limit-Qm}, which completes
the proof of Lemma~\ref{lem-identification}.
\end{proof}

We next prove that $m=k$ in \eqref{eq-limit-Qm}.
For this purpose, we use a radial Pohozaev identity
of \eqref{eq-radial}.
Define for any fixed $\varepsilon\in(0,\varepsilon_*)$
and any solution $u(r)$ of \eqref{eq-radial},
\begin{equation}
 \label{eq-radial-pohozaev}
 \Pi_{\varepsilon,u}(r)
 :=r^N H_\varepsilon(r)
 +\frac{N-2}{2}r^{N-1}u(r)u'(r),
 \ \ r\ge0,
\end{equation}
where $H_\varepsilon(r)$ is the energy function
associated with $u(r)$ and defined by \eqref{eq-defineH}.
Differentiating \eqref{eq-radial-pohozaev}
and using \eqref{eq-radial}, we then obtain the following
radial Pohozaev identity
\begin{equation}
 \label{eq-radial-pohozaev-derivative}
 \Pi_{\varepsilon,u}'(r)
 =r^{N-1}\mathcal D_\varepsilon\bigl(u(r)\bigr),
 \ \ r>0,\ \ 0<\varepsilon<\varepsilon_*,
\end{equation}
where $\mathcal D_\varepsilon:\mathbb R\to\mathbb R$
is defined as
\begin{equation}
 \label{eq-D-epsilon}
 \mathcal D_\varepsilon(s)
 :=-s^2
 +\left(\frac{N}{p+1}-\frac{N-2}{2}\right)
 \abs{s}^{p+1}
 +\varepsilon
 \left(\frac{N-2}{2}-\frac{N}{q+1}\right)
 \abs{s}^{q+1},
 \ \ s\in\mathbb R.
\end{equation}

\begin{lem}
 \label{lem-small-pohozaev}
 For any $\varepsilon>0$,
 let $\mathcal D_\varepsilon(s)$ be defined by
 \eqref{eq-D-epsilon}, 
 where $N,p,q$ satisfy \eqref{eq-range}.
 Then there exist constants $\eta>0$ and
 $\varepsilon_0>0$, depending only on $N,p,q$,
 such that for any $0<\varepsilon\le\varepsilon_0, $
\begin{equation}
 \label{eq-D-negative}
 \mathcal D_\varepsilon(s)\le-\frac12s^2,
 \ \ \abs{s}\le\eta. 
\end{equation}
\end{lem}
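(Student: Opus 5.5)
The plan is to bound each term of $\mathcal D_\varepsilon(s)$ by a small multiple of $s^2$ once $\abs{s}$ is small. Since $p,q>1$, both higher-order terms are $o(s^2)$ as $s\to0$. The sign of $B_q:=\frac{N-2}{2}-\frac{N}{q+1}$ is irrelevant: we simply bound it in absolute value. The coefficient $A_p:=\frac{N}{p+1}-\frac{N-2}{2}$ is positive because $p<\frac{N+2}{N-2}$, as noted in \eqref{eq-AqBp}.

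For $\abs{s}\le\eta\le1$ we write
\[
 \mathcal D_\varepsilon(s)\le -s^2+A_p\abs{s}^{p+1}+\varepsilon\abs{B_q}\abs{s}^{q+1}
 = s^2\Bigl(-1+A_p\abs{s}^{p-1}+\varepsilon\abs{B_q}\abs{s}^{q-1}\Bigr).
\]
Because $p-1>0$ and $q-1>0$, each factor $\abs{s}^{p-1}$ and $\abs{s}^{q-1}$ is at most $\eta^{p-1}$, respectively $\eta^{q-1}$, and these are at most $1$.

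We now fix the constants, depending only on $N,p,q$. Choose $\eta\in(0,1]$ with $A_p\eta^{p-1}\le\frac14$. Then choose $\varepsilon_0>0$ with $\varepsilon_0\abs{B_q}\le\frac14$; any $\varepsilon_0$ works if $B_q=0$. For $0<\varepsilon\le\varepsilon_0$ and $\abs{s}\le\eta$, the bracket is at most $-1+\frac14+\frac14=-\frac12$, which gives \eqref{eq-D-negative}.

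There is no real obstacle here. The only point needing care is that when $q<p$ the term $\abs{s}^{q+1}$ dominates $\abs{s}^{p+1}$ near zero, and $B_q$ may be positive. It is still of higher order than $s^2$, and the factor $\varepsilon$ together with the choice of $\varepsilon_0$ absorbs it. In fact $\eta$ could also be chosen so that $\abs{B_q}\eta^{q-1}$ is small, which would make the bound hold for every $\varepsilon\le1$.
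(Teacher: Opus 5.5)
Your proof is correct and is essentially the paper's argument: bound $B_q$ in absolute value, factor out $s^2$, and make the bracket at most $-\tfrac12$ for $\abs{s}\le\eta$. The only difference is that the paper takes $\varepsilon_0=1$ and shrinks $\eta$ so that $C(\eta^{p-1}+\eta^{q-1})\le\tfrac12$. That is exactly the alternative you mention at the end, whereas you use the smallness of $\varepsilon_0$ to absorb the $q$-term, which is immaterial.
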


\begin{proof}
Note from \eqref{eq-D-epsilon} that for any $0 <  \varepsilon \leq 1$, 
\[
 \begin{aligned}
  \mathcal D_\varepsilon(s)
  &\le-s^2
  +\left(\frac{N}{p+1}-\frac{N-2}{2}\right)
  \abs{s}^{p+1}
  +\left|\frac{N-2}{2}-\frac{N}{q+1}\right|
  \abs{s}^{q+1}\\
  &\le\left[-1+C\left(
  \abs{s}^{p-1}+\abs{s}^{q-1}
  \right)\right]s^2,\ \  s \in \R, 
 \end{aligned}
\]
where $C>0$ depends only on $N,p,q$.
Taking $\varepsilon_0=1$, we choose $\eta>0$ such that
\[
 C(\eta^{p-1}+\eta^{q-1})\le\frac12.
\]
This thus proves \eqref{eq-D-negative} for all $0 \leq |s| \leq \eta$, which
completes the proof of
Lemma~\ref{lem-small-pohozaev}.
\end{proof}

\begin{lem}
 \label{prop-noescape}
 For each integer $k\ge1$, suppose
 $u_n\in\mathcal B_{\varepsilon_n,k}$ is a $k$-node bound state of \eqref{mainequation},
 where $\varepsilon_n\in(0,\varepsilon_*)$ satisfies
 $\varepsilon_n\searrow0$ as $n\to\infty$.
 Then \eqref{eq-limit-Qm} holds for $m=k$. 
\end{lem}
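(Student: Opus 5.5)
We argue by contradiction. Suppose that, along a subsequence, $u_n\to V\equiv Q_m$ in $C^2_{\mathrm{loc}}([0,+\infty))$ with $0\le m<k$. The aim is to show that the last zero of $u_n$ escapes to infinity, and that this is incompatible with the radial Pohozaev identity \eqref{eq-radial-pohozaev-derivative} combined with Lemma~\ref{lem-small-pohozaev}.

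\textbf{Step 1: a zero escapes.} Since $Q_m$ has exactly $m$ simple zeros and decays exponentially, fix $R_0$ larger than all zeros of $Q_m$ with $0<|Q_m(r)|\le\eta/4$ for $r\ge R_0$. By $C^2_{\mathrm{loc}}$ convergence, $u_n$ has exactly $m$ zeros in $[0,R]$ for each fixed $R>R_0$ and large $n$. Since $u_n$ has $k>m$ zeros, its largest zero $z_n$ satisfies $z_n\to+\infty$.

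\textbf{Step 2: smallness on the tail.} We want $|u_n|\le\eta$ on $[z_n,\infty)$ for large $n$. Let $I_n=(z_n,\infty)$, the last nodal component, and let $r_n=r_{I_n}$ be its unique critical point from Lemma~\ref{lem-center-maximum}, where $|u_n(r_n)|>\beta_{\varepsilon_n}\to\beta_0>1$. So the tail is \emph{not} small: each nodal component carries a bump of height $>\beta_{\varepsilon_n}$. The contradiction must therefore use the fact that bumps far away cost Pohozaev mass.

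Apply $\Pi_n:=\Pi_{\varepsilon_n,u_n}$ on $[R_0,\infty)$. We have $\Pi_n(r)\to0$ as $r\to\infty$ by exponential decay. Moreover $\Pi_n(R_0)\to\Pi_{0,Q_m}(R_0)$. The limiting Pohozaev function for $Q_m$ satisfies $\Pi_{0,Q_m}(R_0)=-\int_{R_0}^\infty r^{N-1}\mathcal D_0(Q_m)\,dr$, which is positive but small, of size $O(e^{-cR_0})$, because $|Q_m|\le\eta/4$ there. Hence
\[
\int_{R_0}^\infty r^{N-1}\mathcal D_{\varepsilon_n}(u_n)\,dr=-\Pi_n(R_0)\to-\Pi_{0,Q_m}(R_0)=:-\kappa(R_0),
\]
with $\kappa(R_0)\to0$ as $R_0\to\infty$.

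\textbf{Step 3: lower bound for the escaping bump (main obstacle).} I expect this to be the hard part. We need to show that the region near the far bump at $r_n\to\infty$ contributes a \emph{positive} amount to $\int r^{N-1}\mathcal D_{\varepsilon_n}(u_n)$ that does not vanish. The plan is to use the energy. Since $H_{\varepsilon_n}$ is decreasing and $H_{\varepsilon_n}(r_n)=F_{\varepsilon_n}(u_n(r_n))>0$, at the zero $z_n$ we have $\tfrac12 u_n'(z_n)^2=H_{\varepsilon_n}(z_n)\ge F_{\varepsilon_n}(u_n(r_n))>0$.

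Next, compute $\Pi_n$ at the zeros. At a zero, $\Pi_n(z)=z^N H_{\varepsilon_n}(z)=\tfrac12 z^N u_n'(z)^2$. Take the largest zero $z_n$ and let $r\to\infty$ in the Pohozaev identity:
\[
\tfrac12 z_n^N u_n'(z_n)^2=-\int_{z_n}^\infty r^{N-1}\mathcal D_{\varepsilon_n}(u_n)\,dr.
\]
Combining this with Step 2 and with $\mathcal D_{\varepsilon_n}(u_n)\le -\tfrac12u_n^2<0$ on the region $\{|u_n|\le\eta\}$ gives
\[
\Pi_n(R_0)=\Pi_n(z_n)-\int_{R_0}^{z_n} r^{N-1}\mathcal D_{\varepsilon_n}(u_n)\,dr .
\]
The term $\Pi_n(z_n)$ is at least $\tfrac12 z_n^N\cdot 2F_{\varepsilon_n}(u_n(r_n))$, so we need a uniform positive lower bound on $H_{\varepsilon_n}(z_n)$. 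For this, bound the energy loss on the monotone pieces as in \eqref{eq-half-cycle-loss}: a uniform $|u_n'|$ bound follows from $H_{\varepsilon_n}\le F_{\varepsilon_n}(M_k)$, and the loss is $\le C/z_n\to0$. Hence $H_{\varepsilon_n}(z_n)\ge H_{\varepsilon_n}(r_n)-o(1)$, and $\lim H_{\varepsilon_n}(\infty)=0$ forces $H_{\varepsilon_n}(r_n)=o(1)$, while $F_{\varepsilon_n}(u_n(r_n))=H_{\varepsilon_n}(r_n)$.

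This has to be balanced against the term $\int_{R_0}^{z_n}r^{N-1}\mathcal D$. On $[R_0,z_n]$ the function $u_n$ passes from size $\eta/4$, through possible bumps, to $z_n$. The integrand is negative where $|u_n|\le\eta$, and the positive parts occur only in bumps at large radius. The cleanest route I would try first is to evaluate $\Pi_n$ at the zero $z_n'$ \emph{preceding} the escaping bump: there $\Pi_n(z_n')=\tfrac12 z_n'^N u_n'(z_n')^2>0$ with $z_n'\to\infty$. I would then show $u_n'(z_n')^2\ge c>0$ by a rescaled limit. The translates $u_n(\cdot+z_n')$ converge to a solution of the one-dimensional equation $v''-v+|v|^{p-1}v=0$ with $v(0)=0$ and $|v|_{\max}\ge\beta_0$, forcing $v'(0)^2>0$. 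Consequently $\Pi_n(z_n')\to+\infty$. Then monotonic control on $[R_0,z_n']$, where the integrand is $\le0$ outside the bump regions, gives $\Pi_n(R_0)\ge\Pi_n(z_n')$ minus the contributions of any intermediate bumps, which are handled inductively by the same sign argument. This yields $\Pi_n(R_0)\to+\infty$, contradicting $\Pi_n(R_0)\to\kappa(R_0)<\infty$. Hence $m=k$.
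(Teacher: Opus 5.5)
Your setup matches the paper: a zero of $u_n$ escapes to infinity, $\Pi_{\varepsilon_n,u_n}$ is bounded at a fixed radius $L$ (your $R_0$) by $C^2_{\mathrm{loc}}$ convergence, and it is nonincreasing wherever $|u_n|\le\eta$.

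The gap is in Step 3. You assert the bound $u_n'(z_n')^2\ge c>0$ but do not prove it, and your rescaling argument for it is circular. The translates $u_n(\cdot+z_n')$ do converge to a solution $v$ of $v''-v+|v|^{p-1}v=0$ with $v(0)=0$. However, $\max|v|\ge\beta_0$ holds only if the next bump stays at bounded distance from $z_n'$, and that is equivalent to $u_n'(z_n')$ staying away from $0$. If $u_n'(z_n')\to0$, the orbit lingers near the hyperbolic saddle $0$ of the limiting flow for an unbounded time, the bump drifts off, and $v\equiv0$. Your energy computation does not rescue this: it only gives $\frac12u_n'(z_n)^2=H_{\varepsilon_n}(z_n)\ge H_{\varepsilon_n}(r_n)=o(1)$. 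Your closing claim that intermediate bumps are handled ``by the same sign argument'' is also unsupported, since $\mathcal D_{\varepsilon_n}(u_n)$ has no fixed sign where $|u_n|$ is large.

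The missing idea is where to evaluate $\Pi_{\varepsilon_n,u_n}$. At a zero the cross term $\frac{N-2}{2}r^{N-1}u_nu_n'$ vanishes, and only $r^NH_{\varepsilon_n}$ survives, which may be small. The paper instead evaluates at $T_n$, the first radius after the first escaping zero $R_n$ with $|u_n(T_n)|=\eta_0$. Here $\eta_0<\eta$ is fixed so that $F_\varepsilon(\eta_0)\le-c_2<0$ for every $\varepsilon$. The radius $T_n$ exists because the next nodal component carries a bump above $\beta_{\varepsilon_n}>\beta_0>\eta_0$.

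At $T_n$ the argument closes as follows.
\begin{itemize}
\item Since $H_{\varepsilon_n}(T_n)>0$, you get $\frac12u_n'(T_n)^2\ge -F_{\varepsilon_n}(\eta_0)\ge c_2$, however small $H_{\varepsilon_n}$ is.
\item Since $T_n$ is a first hitting time, $u_nu_n'(T_n)\ge0$, hence $u_nu_n'(T_n)\ge\eta_0\sqrt{2c_2}$.
\item Dropping the nonnegative term $T_n^NH_{\varepsilon_n}(T_n)$ gives $\Pi_{\varepsilon_n,u_n}(T_n)\ge\frac{N-2}{2}\eta_0\sqrt{2c_2}\,T_n^{N-1}\to+\infty$.
\item On the other hand, $|u_n|\le\eta$ on all of $[L,T_n]$, so Lemma~\ref{lem-small-pohozaev} gives $\Pi_{\varepsilon_n,u_n}(T_n)\le\Pi_{\varepsilon_n,u_n}(L)\le C_L$, a contradiction.
\end{itemize}
This route needs neither a lower bound on $H_{\varepsilon_n}$ at zeros nor any analysis of later bumps.
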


\begin{proof}
On the contrary, suppose $m<k$ in view of Lemma~\ref{lem-identification}.
Since $Q_m(r)$ has exactly $m$ zero points
in $(0,+\infty)$ and $Q_m(r)\to0$ as $r\to+\infty$,
we can choose $L>0$ such that all these zero points
belong to $(0,L)$ and
\begin{equation}
 \label{eq-L-small}
 \abs{Q_m(L)}<\frac{\eta}{2},
 \ \ Q_m(L)Q_m'(L)<0,
\end{equation}
where $\eta>0$ is given by Lemma~\ref{lem-small-pohozaev}.
Noting that the zero points of $Q_m(r)$
are simple,
we deduce from \eqref{eq-local-limit}
and \eqref{eq-limit-Qm} that
for any sufficiently large $n>0$,
$u_n(r)$ has exactly one zero point in a sufficiently
small neighborhood of each zero point for $Q_m(r)$
 and no other zero point in $[0,L]$. 
Thus, $u_n(r)$ has exactly $m$ zero points in $[0,L]$, where $n>0$ is sufficiently large.
Since $m<k$, we can define for any sufficiently large $n>0$, 
\[
 R_n:=\min\{r>L:\, u_n(r)=0\}>L.
\]
Furthermore, for any fixed $R>L$, 
\[
 \min_{r\in[L,R]}\abs{Q_m(r)}>0.
\]
It then follows from \eqref{eq-local-limit}
and \eqref{eq-limit-Qm} that 
for any sufficiently large $n>0$, 
$u_n(r)$ has no zero point
in $[L,R]$,
and hence $R_n>R$.
Since $R>L$ is arbitrary, we obtain that
\begin{equation}
 \label{eq-Rn-infty}
 R_n\to+\infty
 \ \ \text{as}\ \ n\to\infty.
\end{equation}

We further derive from \eqref{eq-local-limit},
\eqref{eq-limit-Qm} and \eqref{eq-L-small} that for any sufficiently large $n>0$, 
\[
 \abs{u_n(L)}<\eta,
 \ \ u_n(L)u_n'(L)<0. 
\]
Since $u_n(r)\neq 0$ holds for any $r\in[L,R_n)$,
it follows from Lemma~\ref{lem-center-maximum} that
$\abs{u_n(r)}$ is strictly decreasing on $[L,R_n]$.
Consequently,
\begin{equation}
 \label{eq-small-on-LR}
 \abs{u_n(r)}\le\abs{u_n(L)}<\eta, 
\ \ L\le r\le R_n.
\end{equation}
Since $0<\varepsilon_n\le\varepsilon_0$ holds
for any sufficiently large $n>0$,
we deduce from \eqref{eq-radial-pohozaev-derivative},
\eqref{eq-D-negative}
and \eqref{eq-small-on-LR} that
\begin{equation}
 \label{eq-Pi-at-R}
 \Pi_{\varepsilon_n,u_n}(R_n)
 \le\Pi_{\varepsilon_n,u_n}(L)\le C_L,
\end{equation}
where $C_L>0$ is independent of $n>0$.
Here the last inequality follows from
\eqref{eq-fF}, \eqref{eq-defineH},
\eqref{eq-local-limit} and \eqref{eq-radial-pohozaev},
since $L>0$ is fixed.

Applying Lemma~\ref{lem-scalar-structure},
we then deduce from \eqref{eq-fF} and \eqref{eq-F0} that
\[
 F_0(\beta_{\varepsilon_n})
 =\frac{\varepsilon_n\beta_{\varepsilon_n}^{q+1}}{q+1}>0,
\]
which implies that
$\beta_{\varepsilon_n}>\beta_0
=\left(\frac{p+1}{2}\right)^{\frac1{p-1}}>1$.
It thus follows from Lemma~\ref{lem-center-maximum} that for any connected component $J$ of
$\{r\ge0:u_n(r)\neq0\}$, 
\begin{equation}
\label{eq-fixed-bump-height}
\max_{r\in J}\abs{u_n(r)}
 >\beta_{\varepsilon_n}>\beta_0. 
\end{equation}
Choose $\eta_0\in(0,\min\{\eta,\beta_0\})$ such that
\[
 \frac{\eta_0^{p-1}}{p+1}\le\frac14.
\]
It then follows from \eqref{eq-fF} that for any $\varepsilon\in(0,\varepsilon_*)$, 
\begin{equation}
 \label{eq-F-eta-negative}
 F_\varepsilon(\eta_0)
 \le-\frac{\eta_0^2}{2}
 +\frac{\eta_0^{p+1}}{p+1}
 \le-\frac{\eta_0^2}{4}=:-c_2<0. 
\end{equation}
In the rest proof of this lemma, suppose that $n>0$ is sufficiently
large, so that $\varepsilon_n\le\varepsilon_0$
and \eqref{eq-Pi-at-R} holds.
We then derive from \eqref{eq-fixed-bump-height} that
$$\abs{u_n(r)}>\beta_0=\left(\frac{p+1}{2}\right)^{\frac1{p-1}}>\eta_0$$ 
holds for some $r>R_n$.
Since $u_n(R_n)=0$, the continuity of $u_n(r)$
allows us to define
\begin{equation}
 \label{eq-Tn}
 T_n:=\min\{r>R_n:\,  \abs{u_n(r)}=\eta_0\}>R_n.
\end{equation}
It then follows that $\abs{u_n(T_n)}=\eta_0$ and
\[
 \abs{u_n(r)}\le\eta_0<\eta
 \ \ \text{for any}\ \ R_n\le r\le T_n.
\]
Applying \eqref{eq-radial-pohozaev-derivative}
and \eqref{eq-D-negative} on $[R_n,T_n]$,
we deduce from \eqref{eq-Pi-at-R} that
\begin{equation}
 \label{eq-Pi-T-upper}
 \Pi_{\varepsilon_n,u_n}(T_n)
 \le\Pi_{\varepsilon_n,u_n}(R_n)
 \le C_L.
\end{equation}

On the other hand, we deduce from
\eqref{eq-fF}, \eqref{eq-defineH}, \eqref{eq-H-positive},
\eqref{eq-F-eta-negative} and \eqref{eq-Tn} that
\[
 \begin{aligned}
  \frac12\abs{u_n'(T_n)}^2
  &=H_{\varepsilon_n}(T_n)
  -F_{\varepsilon_n}(u_n(T_n))\\
  &\ge-F_{\varepsilon_n}(\eta_0)
  \ge c_2,
 \end{aligned}
\]
where $H_{\varepsilon_n}(r)$ is the energy function
associated with $u_n(r)$. 
Consequently,
\begin{equation}
 \label{eq-slope-at-T}
 \abs{u_n'(T_n)}\ge\sqrt{2c_2}.
\end{equation}
Note from \eqref{eq-Tn}
that $u_n(T_n)u_n'(T_n)\ge0$.
Together with \eqref{eq-slope-at-T}, this yields that
\begin{equation}
 \label{eq-product-positive}
 u_n(T_n)u_n'(T_n)
 =\eta_0\abs{u_n'(T_n)}
 \ge\eta_0\sqrt{2c_2}>0.
\end{equation}
We then derive from \eqref{eq-H-positive},
\eqref{eq-radial-pohozaev} and \eqref{eq-product-positive} that
\begin{equation}
 \label{eq-Pi-T-lower}
 \begin{aligned}
  \Pi_{\varepsilon_n,u_n}(T_n)
  &\ge\frac{N-2}{2}T_n^{N-1}
  u_n(T_n)u_n'(T_n)\\
  &\ge\frac{N-2}{2}\eta_0\sqrt{2c_2}\,T_n^{N-1}
  \to+\infty
  \ \ \text{as}\ \ n\to\infty,
 \end{aligned}
\end{equation}
where the limit follows from \eqref{eq-Rn-infty},
since $T_n>R_n$ and $N\ge3$.
This however contradicts \eqref{eq-Pi-T-upper}.
We thus have $m=k$ in \eqref{eq-limit-Qm},
which completes the proof of
Lemma~\ref{prop-noescape}.
\end{proof}

We next prove the convergence of $u_n$ 
in $H^1(\R^N)\cap L^\infty(\R^N)$ as $n \to \infty$. 

\begin{prop}
 \label{prop-global-compactness}
 For each integer $k\ge1$, suppose that
  $u_n\in\mathcal B_{\varepsilon_n,k}$
 is a $k$-node bound state of \eqref{mainequation}, where $\varepsilon_n\in(0,\varepsilon_*)$ satisfies
 $\varepsilon_n\searrow0$ as $n\to\infty$. 
 Then
 \begin{equation}
  \label{eq-sequential-convergence}
  u_n\to Q_k
  \ \ \text{strongly in}\ \
  H^1(\R^N)\cap L^\infty(\R^N)
  \ \ \text{as}\ \ n\to\infty.
 \end{equation}
\end{prop}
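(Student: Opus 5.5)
By Lemmas~\ref{lem-identification} and \ref{prop-noescape} we already know $u_n\to Q_k$ in $C^2_{\mathrm{loc}}([0,+\infty))$. It therefore remains to control the tails uniformly in $n$. The plan is to prove a uniform exponential decay bound
\[
\abs{u_n(r)}+\abs{u_n'(r)}\le Ce^{-r/2},\ \ r\ge L,
\]
with $C$ and $L$ independent of $n$. Dominated convergence, together with $\norm{u_n}_{L^\infty}\le M_k$ from Proposition~\ref{prop-amplitude}, then upgrades local convergence to convergence in $H^1(\R^N)\cap L^\infty(\R^N)$.

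\textbf{Step 1: locating the last node.} Choose $L>0$ so large that all $k$ zeros of $Q_k$ lie in $(0,L)$ and
\[
\abs{Q_k(L)}<\eta_1/2,\ \ Q_k(L)Q_k'(L)<0,
\]
where $\eta_1\in(0,1)$ is small and fixed as below. As in the proof of Lemma~\ref{prop-noescape}, simplicity of the zeros of $Q_k$ and $C^1_{\mathrm{loc}}$ convergence give the following for large $n$. First, $u_n$ has exactly $k$ zeros in $[0,L]$, so it has none in $[L,+\infty)$. Second, $\abs{u_n(L)}<\eta_1$ and $u_n(L)u_n'(L)<0$. By Lemma~\ref{lem-center-maximum}, $\abs{u_n}$ is then strictly decreasing on $[L,+\infty)$, since the last nodal component contains exactly one critical point, and it lies before $L$. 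Hence $\abs{u_n(r)}\le\eta_1$ for all $r\ge L$.

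\textbf{Step 2: uniform exponential decay.} On $[L,\infty)$ set $y=\abs{u_n}$. Then
\[
y''+\frac{N-1}{r}y'=y\bigl(1+\varepsilon_n y^{q-1}-y^{p-1}\bigr)\ge \tfrac34\, y,
\]
provided $\eta_1^{p-1}\le 1/4$. Here the $\varepsilon_n$ term has the helpful sign. Compare $y$ with the supersolution
\[
\psi(r)=\eta_1e^{-(r-L)/2},
\]
for which $\psi''+\frac{N-1}{r}\psi'-\frac34\psi=\bigl(\tfrac14-\tfrac{N-1}{2r}-\tfrac34\bigr)\psi<0$. Since $y(L)\le\psi(L)$ and $y,\psi\to0$ at infinity, the maximum principle on $[L,R]$ with $R\to\infty$ gives $y\le\psi$ on $[L,\infty)$. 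For the derivative, integrate $(r^{N-1}u_n')'=r^{N-1}u_n(1+\dots)$ from $r$ to $\infty$, using $r^{N-1}u_n'\to0$, which follows from the exponential decay of $u_n$ and the ODE. This yields $\abs{u_n'(r)}\le Ce^{-r/2}$ uniformly in $n$. Alternatively, the bound follows from the monotonicity of $y$ together with $H_{\varepsilon_n}\ge0$ and an energy comparison.

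\textbf{Step 3: conclusion.} The $L^\infty$ convergence follows by splitting $\sup\abs{u_n-Q_k}$ into $[0,R]$, where local uniform convergence applies, and $[R,\infty)$, where both functions are bounded by $Ce^{-R/2}$. For $H^1$, write $\norm{u_n-Q_k}_{H^1}^2$ as a radial integral with weight $r^{N-1}$. On $[0,R]$ the integrand converges uniformly, by $C^1_{\mathrm{loc}}$ convergence. On $[R,\infty)$ it is dominated by $C r^{N-1}e^{-r}$, whose integral tends to zero as $R\to\infty$. Since every subsequence has a further subsequence converging to the same limit $Q_k$, which is unique by Tang, the whole sequence converges.

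The main obstacle is making the tail estimate truly uniform in $n$. This rests on Step 1: we need no zero of $u_n$ beyond $L$ and monotone smallness of $\abs{u_n}$ there. Both are exactly what Lemma~\ref{prop-noescape} combined with Lemma~\ref{lem-center-maximum} provide.
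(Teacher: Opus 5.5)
Your proposal is correct and follows essentially the same route as the paper. You locate the last node below a fixed radius via $C^1_{\mathrm{loc}}$ convergence and Lemma~\ref{lem-center-maximum}, compare $\abs{u_n}$ on the tail with an exponential supersolution through the maximum principle, bound $u_n'$, and split into a compact part and a uniformly small tail.

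The only minor difference is the derivative bound. The paper uses the convexity $y_n''>0$ to get $\abs{u_n'(r)}\le y_n(r-1)-y_n(r)$, which avoids your extra step of justifying $r^{N-1}u_n'(r)\to0$. That step is nonetheless valid: $r^{N-1}y_n'$ is negative and increasing, so it has a limit, and a nonzero limit would force $y_n(r)\ge c\,r^{2-N}$, contradicting your exponential bound.
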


\begin{proof}
It follows from Proposition~\ref{prop-amplitude}
and Lemmas~\ref{lem-identification} and \ref{prop-noescape}
that any subsequence of $\{u_n\}_{n\ge1}$ has a 
subsequence converging to $Q_k$ in
$C^2_{\mathrm{loc}}\bigl([0,+\infty)\bigr)$ as $n \to \infty$,
which thus implies that
\begin{equation}
 \label{eq-Qk-local-convergence}
 u_n\to Q_k
 \,\ \ \text{in}\ \
 C^2_{\mathrm{loc}}\bigl([0,+\infty)\bigr)
 \ \ \text{as}\ \ n\to\infty.
\end{equation}

We now derive a uniform exponential decaying estimate of
$\abs{u_n(r)}+\abs{u_n'(r)}$ 
 for sufficiently large $n>0$.
Choose $\delta\in(0,1)$ such that
\begin{equation}
 \label{eq-delta-choice}
 \delta^{p-1}\le\frac12.
\end{equation}
Since $Q_k(r)$ has exactly $k$ zero points
in $(0,+\infty)$ and $Q_k(r)\to0$ as $r\to+\infty$,
one can choose $R_0>1$ such that all $k$ zero points of $Q_k(r)$
belong to $(0,R_0)$ and
\[
 \abs{Q_k(R_0)}<\frac{\delta}{4},
 \ \ Q_k(R_0)Q_k'(R_0)<0.
\]
By the simplicity of the zero points for $Q_k(r)$,
we deduce from \eqref{eq-Qk-local-convergence} that
all $k$ zero points of $u_n(r)$ belong to $(0,R_0)$
and
\[
 \abs{u_n(R_0)}<\frac{\delta}{2},
 \ \ u_n(R_0)u_n'(R_0)<0, 
\]
where $n>0$ is sufficiently large. 
In the following, we assume that $n>0$ is sufficiently large.
Define 
\[
 y_n(r):=\abs{u_n(r)},
 \ \ r\ge R_0.
\]
Since $u_n(r)$ has no zero point in $[R_0,+\infty)$
and $u_n(R_0)u_n'(R_0)<0$,
we deduce from Lemma~\ref{lem-center-maximum} that
\begin{equation}
 \label{eq-tail-monotonicity}
 y_n'(r)<0,
 \ \ 0<y_n(r)\le y_n(R_0)<\frac{\delta}{2},
 \ \ \forall \, r\ge R_0.
\end{equation}
It then follows from \eqref{eq-radial},
\eqref{eq-delta-choice} and \eqref{eq-tail-monotonicity} that
\begin{equation}
 \label{eq-exterior-inequality}
 \begin{aligned}
 &\quad -y_n''(r)-\frac{N-1}{r}y_n'(r)+\frac12y_n(r) \\
 &=y_n(r)^p-\varepsilon_ny_n(r)^q-\frac12y_n(r)
 \le0,\quad r>R_0.
 \end{aligned}
\end{equation}

Choose some $c\in(0,1/\sqrt2)$ and define
\[
 \Psi(r):=\delta e^{-c(r-R_0)},
 \ \ r\ge R_0.
\]
A direct calculation then gives that
\begin{equation}
 \label{eq-exterior-supersolution}
 -\Psi''(r)-\frac{N-1}{r}\Psi'(r)
 +\frac12\Psi(r)
 =\left[\frac12-c^2+\frac{(N-1)c}{r}\right]
 \Psi(r)\ge0,\ \ r>R_0. 
\end{equation}
Since $y_n(R_0)<\Psi(R_0)$ and
$y_n(r)-\Psi(r)\to0$ as $r\to+\infty$,
applying the maximum principle, we then derive from
\eqref{eq-exterior-inequality}
and \eqref{eq-exterior-supersolution}
 that
\begin{equation}
 \label{eq-tail-function-decay}
 0<y_n(r)\le\Psi(r)\le Ce^{-cr},
 \ \ r\ge R_0,
\end{equation}
where $C>0$ is independent of $n>0$.
We also deduce from \eqref{eq-tail-monotonicity}
and \eqref{eq-exterior-inequality} that
\begin{equation}
 \label{eq-tail-convexity}
 y_n''(r)
 \ge-\frac{N-1}{r}y_n'(r)+\frac12y_n(r)>0,
 \ \ r>R_0.
\end{equation}
It then follows from \eqref{eq-tail-function-decay} and \eqref{eq-tail-convexity}
 that
\[
 \begin{aligned}
  \abs{u_n'(r)}
  &=-y_n'(r)
  \le-\int_{r-1}^r y_n'(s)\,ds\\
  &=y_n(r-1)-y_n(r)
  \le Ce^{-cr},\ \ r\ge R_0+1.
 \end{aligned}
\]
Applying \eqref{eq-Qk-local-convergence} on $[R_0,R_0+1]$,
we may increase $C>0$ so that
\begin{equation}
 \label{eq-uniform-exponential-tail}
 \abs{u_n(r)}+\abs{u_n'(r)}
 \le Ce^{-cr}, \ \ r\ge R_0,
\end{equation}
where $C>0$ and $c>0$ are independent of $n>0$ and $r>0$.
Setting $n\to\infty$ for
\eqref{eq-uniform-exponential-tail},
we then deduce from \eqref{eq-Qk-local-convergence} that
\begin{equation}
 \label{eq-Qk-exponential-tail}
 \abs{Q_k(r)}+\abs{Q_k'(r)}
 \le Ce^{-cr}, \ \ r\ge R_0.
\end{equation}

We finally prove \eqref{eq-sequential-convergence}.
For any fixed $R>R_0$, splitting $\R^N$
into $B_R:=\{x\in\R^N:\abs{x}<R\}$ and $\R^N\setminus B_R$,
we deduce from \eqref{eq-Qk-local-convergence},
\eqref{eq-uniform-exponential-tail}
and \eqref{eq-Qk-exponential-tail} that
\[
 \begin{aligned}
  \limsup_{n\to\infty}
  \norm{u_n-Q_k}_{H^1(\R^N)}^2
  \le C\int_R^{+\infty}e^{-2cr}r^{N-1}\,dr,
   \end{aligned}
\]
\[
 \begin{aligned}
  \limsup_{n\to\infty}
  \norm{u_n-Q_k}_{L^\infty(\R^N)}
  \le2Ce^{-cR},
 \end{aligned}
\]
where $C>0$ is independent of $R$.
Setting $R\to+\infty$, we then obtain that
\eqref{eq-sequential-convergence} holds.
This therefore completes the proof of Proposition~\ref{prop-global-compactness}.
\end{proof}

We finally prove Theorem~\ref{thm-main}
by applying Proposition~\ref{prop-global-compactness}.

\begin{proof}[Proof of Theorem~\ref{thm-main}]
For each integer $k\ge1$, it follows from
Proposition~\ref{prop-global-compactness} that
\eqref{eq-uniform-convergence} holds.
We then deduce from \eqref{eq-uniform-convergence} that
there exists $\varepsilon_k\in(0,\varepsilon_*)$ such that for any $0<\varepsilon<\varepsilon_k$, 
\[
 \sup_{u_\varepsilon\in\mathcal B_{\varepsilon,k}}
 \left[
  \norm{u_\varepsilon-Q_k}_{H^1(\R^N)}
  +\norm{u_\varepsilon-Q_k}_{L^\infty(\R^N)}
 \right]\le1. 
\]
Applying the triangle inequality, we then obtain that for any $0<\varepsilon<\varepsilon_k$
and any $u_\varepsilon\in\mathcal B_{\varepsilon,k}$, 
\[
 \begin{aligned}
  \norm{u_\varepsilon}_{H^1(\R^N)}
  +\norm{u_\varepsilon}_{L^\infty(\R^N)}
  &\le\norm{Q_k}_{H^1(\R^N)}
  +\norm{Q_k}_{L^\infty(\R^N)}+1
  =C_k<+\infty.
 \end{aligned}
\]
This therefore proves \eqref{eq-uniform-bounds}, 
which completes the proof of Theorem~\ref{thm-main}.
\end{proof}

\section{Proof of Theorem \ref{maintheorem}}
\label{sec-uniqueness}

The purpose of this section is to complete the proof of Theorem~\ref{maintheorem}.
Recall from \eqref{eq-Lk} that the radial
linearized operator $L_k:\Hrad\to\Hmrad$
around the $k$-node bound state $Q_k$
of \eqref{eq-limit} is given by
\[
 L_k=-\Delta+1-p\abs{Q_k(x)}^{p-1}
 \,\ \ \text{in}\ \, \R^N, 
\]
where $\Hrad$ is defined in \eqref{1:radial},
and $\Hmrad:=(\Hrad)^*$.
For each integer $k\ge1$, we shall derive
the inverse estimate \eqref{1:eq-Lk}
for $L_k$
around $Q_k$.
We then apply this inverse estimate to prove the uniqueness
of solutions
$u_\varepsilon\in\Hrad\cap L^\infty(\R^N)$
for \eqref{mainequation}
in a fixed $L^\infty(\R^N)$ neighborhood of $Q_k$, where
$\varepsilon>0$ is sufficiently small.
Combining this local uniqueness with
the uniform $L^\infty(\R^N)$ convergence
of Theorem~\ref{thm-main},
we finally conclude that \eqref{mainequation}
has a unique $k$-node bound state
$u_\varepsilon\in\mathcal B_{\varepsilon,k}$
for any sufficiently small $\varepsilon>0$.

To derive the inverse estimate \eqref{1:eq-Lk} of $L_k$ in $\R^N$,
we first prove that $Q_k$ is nondegenerate in $\Hrad$.

\begin{lem}
 \label{prop-Qk-nondegenerate}
 Suppose $N\ge3$ and $1<p<\frac{N+2}{N-2}$.
 Then for each integer $k\ge1$, the $k$-node bound state $Q_k$
 of \eqref{eq-limit} is nondegenerate in $\Hrad$, 
 in the sense that
 \begin{equation}
  \label{eq-Qk-kernel}
  \ker_{\Hrad}L_k=\{0\}.
 \end{equation}
\end{lem}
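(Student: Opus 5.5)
We argue by contradiction: suppose $0\ne\phi\in\ker_{\Hrad}L_k$. By standard elliptic regularity $\phi\in C^2(\R^N)$. Writing $\phi=\phi(r)$, it solves the radial ODE
\[
\phi''+\frac{N-1}{r}\phi'-\phi+p\abs{Q_k}^{p-1}\phi=0,\quad r>0,
\]
with $\phi'(0)=0$. We may also assume $\phi(0)\neq0$: if $\phi(0)=0$, then $\phi\equiv0$ by ODE uniqueness for the regular singular problem at $r=0$. After normalizing, $\phi(0)=1$. The linear initial value problem has a one-dimensional space of regular solutions, so $\phi$ is a constant multiple of the shooting variation
\[
\Phi(r):=\partial_\alpha U(r;\alpha)\big|_{\alpha=Q_k(0)},
\]
where $U(\cdot;\alpha)$ solves $U''+\frac{N-1}{r}U'-U+\abs{U}^{p-1}U=0$ with $U(0)=\alpha$ and $U'(0)=0$. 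Differentiating in $\alpha$ shows $\Phi$ solves the same linearized ODE with $\Phi(0)=1$ and $\Phi'(0)=0$, so $\phi=\Phi$.

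The key step is to show $\abs{\Phi(r)}\to+\infty$ as $r\to\infty$. For this I would invoke Tang's analysis of the shooting map in \cite[Lemma~3.1(b) and Section~5.8]{tang2026uniqueness}, which is the ingredient behind the uniqueness of $Q_k$. That analysis shows the derivative of the shooting map transversally crosses the bound-state value, meaning $\Phi$ does not decay but is eventually a nonzero combination dominated by the growing mode. Concretely, for $r$ large, $p\abs{Q_k}^{p-1}\le Ce^{-(p-1)cr}$ by the exponential decay of $Q_k$ (cf.\ \eqref{eq-Qk-exponential-tail}). So the linearized equation is an exponentially small perturbation of $\psi''+\frac{N-1}{r}\psi'-\psi=0$. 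Its solution space is spanned by two solutions $\psi_\pm$ with $\psi_\pm\sim r^{-\frac{N-1}{2}}e^{\pm r}$, obtained by a Levinson-type or Volterra-integral-equation argument. Any solution is then $\Phi=c_+\psi_++c_-\psi_-$, and Tang's result gives $c_+\neq0$, hence $\abs{\Phi}\to\infty$.

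Given that dichotomy, the conclusion is immediate. If $\Phi\in H^1_{O(N)}$, then $c_+=0$, since $\psi_+$ grows exponentially and is not square integrable. This contradicts $c_+\ne0$. Hence $\ker_{\Hrad}L_k=\{0\}$.

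The main obstacle is the middle step: extracting from Tang's work precisely the statement that the shooting variation at $\alpha=Q_k(0)$ is unbounded, rather than merely that nodes persist. I would first try to read it off from \cite[Lemma~3.1(b)]{tang2026uniqueness} combined with the monotonicity argument of Section~5.8, which compares $\Phi$ with the Pohozaev-type function $rQ_k'+\frac{2}{p-1}Q_k$ on each nodal interval. If that comparison only yields that $\Phi$ has a fixed sign at infinity with $\Phi\Phi'>0$ eventually, I would finish with a Wronskian argument against the decaying solution $\psi_-$. Such a $\Phi$ cannot be a multiple of $\psi_-$, since $\psi_-\psi_-'<0$ eventually, so $c_+\neq0$.
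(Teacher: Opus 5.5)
Your proposal is correct and follows the paper's proof. You reduce any radial kernel element, via ODE uniqueness at $r=0$, to the multiple $\phi(0)\Phi$ of the shooting variation $\Phi$ (the paper's $Z_k$), and then invoke \cite[Lemma~3.1(b) and Section~5.8]{tang2026uniqueness} for $\abs{\Phi(r)}\to+\infty$, which forces $\phi(0)=0$. Your added Levinson decomposition into $\psi_\pm$ and the Wronskian fallback are unnecessary, since $\abs{\Phi(r)}\to+\infty$ (or even $\Phi\Phi'>0$ eventually) already rules out $\Phi\in L^2(\R^N)$ directly.
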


\begin{proof}
For any $\gamma>0$, denote $U=U(r,\gamma)$
the solution of the following initial value problem:
\begin{equation}
 \label{eq-pure-shooting}
 \begin{cases}
  U_{rr}+\frac{N-1}{r}U_r-U+\abs{U}^{p-1}U=0
  \,\ \ \text{in}\ \ r>0,\\
  U(0,\gamma)=\gamma,
  \ \ U_r(0,\gamma)=0.
 \end{cases}
\end{equation}
Define $\gamma_k:=Q_k(0)>0$ and
\begin{equation}
 \label{eq-Zk}
 Z_k(r):=
 \left.\frac{\partial U(r,\gamma)}{\partial\gamma}
 \right|_{\gamma=\gamma_k},
 \ \ r\ge0.
\end{equation}
Differentiating \eqref{eq-pure-shooting}
with respect to $\gamma$ at $\gamma=\gamma_k$,
we obtain that
\begin{equation}
 \label{eq-Zk-equation}
 \begin{cases}
  Z_k''+\frac{N-1}{r}Z_k'-Z_k
  +p\abs{Q_k}^{p-1}Z_k=0
  \,\ \ \text{in}\ \ r>0,\\
  Z_k(0)=1,
  \ \ Z_k'(0)=0.
 \end{cases}
\end{equation}
It then follows from
\cite[Lemma~3.1(b) and Section~5.8]{tang2026uniqueness} that
\begin{equation}
 \label{eq-Zk-divergence}
 \abs{Z_k(r)}\to+\infty
 \ \ \text{as}\ \ r\to+\infty.
\end{equation}

Suppose $\phi\in\Hrad$ satisfies $L_k\phi=0$ in $\R^N$.
Applying the elliptic regularity to the equation $L_k\phi=0$ in $\R^N$,
we obtain that $\phi\in C^2_{\mathrm{loc}}(\R^N)$.
Since $\phi(x)$ is radially symmetric about the origin,
$\phi=\phi(r)$ satisfies the following ordinary differential equation
\[
 \begin{cases}
  \phi''+\frac{N-1}{r}\phi'-\phi
  +p\abs{Q_k}^{p-1}\phi=0
  \,\ \ \text{in}\ \ r>0,\\
  \phi'(0)=0.
 \end{cases}
\]
Applying the ODE uniqueness theorem,
we then conclude from \eqref{eq-Zk-equation} that
\[
 \phi(r)=\phi(0)Z_k(r),\ \ r\ge0.
\]
Since $\phi\in L^2(\R^N)$, we deduce from
\eqref{eq-Zk-divergence} that $\phi(0)=0$,
and hence $\phi(r) \equiv 0$ for any $r\ge0$.
This proves \eqref{eq-Qk-kernel},
which completes the proof of
Lemma~\ref{prop-Qk-nondegenerate}.
\end{proof}

We now employ Lemma~\ref{prop-Qk-nondegenerate}
to prove the inverse estimate of $L_k$ in $\R^N$. 

\begin{lem}
 \label{lem-Lk-inverse}
 For each integer $k\ge1$, there exists $A_k>0$ such that
 \begin{equation}
  \label{eq-Lk-inverse}
  \norm{\phi}_{H^1(\R^N)}
  \le A_k\norm{L_k\phi}_{H^{-1}(\R^N)},
  \ \ \forall \, \phi\in\Hrad.
 \end{equation}
\end{lem}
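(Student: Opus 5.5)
The plan is to argue by contradiction, using the nondegeneracy of Lemma~\ref{prop-Qk-nondegenerate} together with the compactness of the potential term $p\abs{Q_k}^{p-1}$. On $\Hrad$, write $L_k=(-\Delta+1)-K$, where $K\phi:=p\abs{Q_k}^{p-1}\phi$.

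First, $-\Delta+1:\Hrad\to\Hmrad$ is the Riesz isomorphism, and it is an isometry when $H^1$ carries the norm $\int(\abs{\nabla\phi}^2+\phi^2)$. Second, $K:\Hrad\to\Hmrad$ is compact. Indeed, $\abs{Q_k}^{p-1}$ is bounded and decays exponentially by \eqref{eq-Qk-exponential-tail}. For a bounded sequence $\phi_n\rightharpoonup\phi$ in $H^1$ we have $\phi_n\to\phi$ in $L^2(B_R)$ by Rellich. For any test function $\psi$ with $\norm{\psi}_{H^1}\le1$,
\[
\abs{\langle K(\phi_n-\phi),\psi\rangle}\le p\,\|Q_k\|_{L^\infty(B_R)}^{p-1}\norm{\phi_n-\phi}_{L^2(B_R)}+p\sup_{\abs{x}\ge R}\abs{Q_k}^{p-1}\norm{\phi_n-\phi}_{L^2}.
\]
This bound is small uniformly in $\psi$ once $R$ and then $n$ are taken large. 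Therefore $L_k$ is Fredholm of index zero on $\Hrad$.

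Suppose \eqref{eq-Lk-inverse} fails. Then there are $\phi_n\in\Hrad$ with $\norm{\phi_n}_{H^1}=1$ and $\norm{L_k\phi_n}_{H^{-1}}\to0$. Up to a subsequence, $\phi_n\rightharpoonup\phi$ weakly in $\Hrad$, and by compactness $K\phi_n\to K\phi$ in $\Hmrad$. Passing to the weak limit in $L_k\phi_n\to0$ gives $L_k\phi=0$ in $\Hmrad$.

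One point needs checking here: $\Hmrad$ tests only against radial functions, whereas Lemma~\ref{prop-Qk-nondegenerate} is phrased for a distributional kernel. Since $Q_k$ is radial, $L_k\phi$ commutes with averaging over $O(N)$. Hence vanishing against radial test functions implies vanishing against all test functions, by averaging $\psi$ over $O(N)$. So $\phi\in\ker_{\Hrad}L_k=\{0\}$ by Lemma~\ref{prop-Qk-nondegenerate}.

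Next we obtain strong convergence to reach the contradiction. Since $(-\Delta+1)\phi_n=L_k\phi_n+K\phi_n\to0+K\phi=0$ in $\Hmrad$, the isometry property gives $\norm{\phi_n}_{H^1}=\norm{(-\Delta+1)\phi_n}_{H^{-1}}\to0$. This contradicts $\norm{\phi_n}_{H^1}=1$.

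The main obstacle I expect is the compactness of $K$ on the unbounded domain $\R^N$. This is handled by the exponential decay of $Q_k$ from \eqref{eq-Qk-exponential-tail}, and one could alternatively invoke Strauss's radial compactness. The remaining steps are routine.
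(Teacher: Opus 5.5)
Your proposal is correct and follows essentially the paper's argument: a normalized contradiction sequence, a weak limit that Lemma~\ref{prop-Qk-nondegenerate} forces to vanish, and decay of $Q_k$ plus local $L^2$ compactness to kill the potential term, which contradicts $\|\phi_n\|_{H^1}=1$. The differences are cosmetic. You close via norm-compactness of $K$ into $H^{-1}$ and the Riesz isometry, where the paper uses the identity $\|\phi_n\|_{H^1}^2=\langle L_k\phi_n,\phi_n\rangle+p\int|Q_k|^{p-1}\phi_n^2$. Your Fredholm remark is unnecessary, while your averaging argument usefully fills in the radial-versus-distributional kernel point that the paper passes over silently.
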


\begin{proof}
On the contrary, suppose \eqref{eq-Lk-inverse}
is false for some integer $k\ge1$.
Then there exists a sequence $\{\phi_n\}\subset\Hrad$
such that
\[
 \norm{\phi_n}_{H^1(\R^N)}\equiv 1
 \ \ \text{for any}\ \ n\ge1,
 \ \ \norm{L_k\phi_n}_{H^{-1}(\R^N)}\to0
 \ \ \text{as}\ \ n\to\infty.
\]
Passing to a subsequence if necessary, we may assume that
\[
 \phi_n\rightharpoonup\phi
 \,\ \ \text{in}\ \ H^1(\R^N),
 \ \ \phi_n\to\phi
 \,\ \ \text{in}\ \ L^2_{\mathrm{loc}}(\R^N)
 \ \ \text{as}\ \ n\to\infty,
\]
where $\phi\in\Hrad$.
It then follows from \eqref{eq-Lk} that $L_k\phi=0$ in $\R^N$.
Lemma~\ref{prop-Qk-nondegenerate} further yields that
$\phi \equiv 0$ in $\R^N$. 

Since $\phi_n\to0$ in $L^2_{\mathrm{loc}}(\R^N)$
as $n\to\infty$ and
$\norm{\phi_n}_{H^1(\R^N)}\equiv 1$ holds for any $n\ge1$,
the decaying property of $Q_k(x)$
as $\abs{x}\to+\infty$ yields that
\[
 \int_{\R^N}\abs{Q_k(x)}^{p-1}\phi_n(x)^2\,dx
 \to0
 \ \ \text{as}\ \ n\to\infty.
\]
It then follows from \eqref{eq-Lk} that
\[
 \begin{aligned}
  1
  &=\norm{\phi_n}_{H^1(\R^N)}^2\\
  &=\langle L_k\phi_n,\phi_n\rangle
  +p\int_{\R^N}
  \abs{Q_k(x)}^{p-1}\phi_n(x)^2\,dx
  \to0
  \ \ \text{as}\ \ n\to\infty,
 \end{aligned}
\]
a contradiction.
This therefore proves \eqref{eq-Lk-inverse}
and completes the proof of Lemma~\ref{lem-Lk-inverse}.
\end{proof}

Applying Lemma~\ref{lem-Lk-inverse},
we prove the following result on the uniqueness
of solutions for \eqref{mainequation}
in an $L^\infty(\R^N)$ neighborhood of $Q_k$.

\begin{prop}
 \label{prop-local-uniqueness}
 Suppose $p,q>1$ satisfy \eqref{eq-range}.
 Then there exist
 $\eta_k>0$ and $\varepsilon_k\in(0,\varepsilon_*)$
 such that for any $0<\varepsilon<\varepsilon_k$,
 \eqref{mainequation} has at most one solution
 $u_\varepsilon\in\Hrad\cap L^\infty(\R^N)$
 satisfying
 \begin{equation}
  \label{eq-local-neighborhood}
  \norm{u_\varepsilon-Q_k}_{L^\infty(\R^N)}<\eta_k.
 \end{equation}
\end{prop}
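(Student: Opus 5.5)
Suppose $u,w\in\Hrad\cap L^\infty(\R^N)$ are two solutions of \eqref{mainequation} with the same $\varepsilon\in(0,\varepsilon_k)$, both satisfying \eqref{eq-local-neighborhood}. The plan is to show that $\phi:=u-w\in\Hrad$ satisfies
\[
 \norm{L_k\phi}_{H^{-1}(\R^N)}\le\frac1{2A_k}\norm{\phi}_{H^1(\R^N)},
\]
and then apply Lemma~\ref{lem-Lk-inverse} to get $\norm{\phi}_{H^1}\le\frac12\norm{\phi}_{H^1}$, so $\phi\equiv0$. The smallness comes from two sources: $\eta_k$ (to make $u,w$ close to $Q_k$) and $\varepsilon$ (to kill the defocusing term).

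Subtracting the two equations gives
\[
 L_k\phi=\bigl[g(u)-g(w)-p|Q_k|^{p-1}\phi\bigr]-\varepsilon\bigl[h(u)-h(w)\bigr],
 \qquad g(s)=|s|^{p-1}s,\ \ h(s)=|s|^{q-1}s.
\]
By the mean value theorem, $g(u)-g(w)=p|\theta|^{p-1}\phi$ for some $\theta$ between $u$ and $w$, so $|\theta-Q_k|<\eta_k$ pointwise. Since $\norm{Q_k}_{L^\infty}=Q_k(0)$, the functions $u,w,\theta$ are all bounded by $M:=Q_k(0)+1$. Hence $\bigl|\varepsilon(h(u)-h(w))\bigr|\le\varepsilon qM^{q-1}|\phi|$.

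For the first bracket, set $\omega(\eta):=\sup\{\,\bigl||a|^{p-1}-|b|^{p-1}\bigr|:|a|,|b|\le M,\ |a-b|\le\eta\,\}$. This tends to $0$ as $\eta\to0$ by uniform continuity of $s\mapsto|s|^{p-1}$ on $[-M,M]$; this works for every $p>1$, including $1<p<2$ where the map is only Hölder. Pairing with a test function $\psi\in\Hrad$ and using Cauchy--Schwarz in $L^2$,
\[
 \abs{\langle L_k\phi,\psi\rangle}\le\bigl(p\,\omega(\eta_k)+\varepsilon qM^{q-1}\bigr)\norm{\phi}_{L^2}\norm{\psi}_{L^2}.
\]
Note that the $H^{-1}$ norm should be taken in $\Hmrad$, which is the sense in which Lemma~\ref{lem-Lk-inverse} is used. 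Then choose $\eta_k$ with $p\,\omega(\eta_k)\le\frac1{4A_k}$, and $\varepsilon_k$ with $\varepsilon_k qM^{q-1}\le\frac1{4A_k}$. Together these give the desired inequality, and Lemma~\ref{lem-Lk-inverse} finishes the proof.

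The only delicate points are checking that $u,w\in H^1$ genuinely (which holds by assumption) and obtaining the uniform modulus of continuity of $|s|^{p-1}$ when $p<2$. Both are handled by the $L^\infty$ closeness and uniform continuity above, so I do not expect a serious obstacle. The key input is Lemma~\ref{lem-Lk-inverse}.
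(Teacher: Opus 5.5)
Your proposal is correct and is essentially the paper's proof: you write $L_k(u-w)$ as an $L^\infty$ multiplier times $u-w$ via the mean value theorem, bound that multiplier by $p\,\omega(\eta_k)+\varepsilon qM^{q-1}$ using uniform continuity of $\abs{s}^{p-1}$ on $[-M,M]$, and absorb with Lemma~\ref{lem-Lk-inverse}; the paper's integral form $s\int_0^1\abs{(1-t)w+tu}^{s-1}\,dt$ in place of your pointwise intermediate point $\theta$ is only a cosmetic difference. You should state explicitly that you also choose $\eta_k\le 1$, since that is what guarantees $\abs{u},\abs{w},\abs{\theta}\le M=Q_k(0)+1$.
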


\begin{proof}
Define for each integer $k\ge1$, 
\[
 B_k:=\norm{Q_k}_{L^\infty(\R^N)}+1>0, 
\]
and
\[
 \omega_k(\rho)
 :=\sup_{\substack{a,b\in[-B_k,B_k]\\\abs{a-b}\le\rho}}
 p\left|\abs{a}^{p-1}-\abs{b}^{p-1}\right|,
 \ \ \rho\ge0.
\]
Since $t\mapsto p\abs{t}^{p-1}$ is uniformly continuous
on $[-B_k,B_k]$, we get that
\[
 \omega_k(\rho)\to0
 \ \ \text{as}\ \ \rho\searrow0.
\]
We therefore choose $\eta_k\in(0,1)$ and
$\varepsilon_k\in(0,\varepsilon_*)$ such that
\begin{equation}
 \label{eq-local-smallness}
 A_k\left[
 \omega_k(\eta_k)+qB_k^{q-1}\varepsilon_k
 \right]\le\frac12,
\end{equation}
where $A_k>0$ is given by Lemma~\ref{lem-Lk-inverse}.

Suppose $0<\varepsilon<\varepsilon_k$, and
$u_\varepsilon,w_\varepsilon\in\Hrad\cap L^\infty(\R^N)$ are solutions of
\eqref{mainequation} satisfying
\eqref{eq-local-neighborhood}.
Define 
\[
z_\varepsilon(x):=u_\varepsilon(x)-w_\varepsilon(x) \,\ \ \text{in}\ \ \R^N.
\]
We also define for any $s>1$, 
\[
 \mathfrak a_{\varepsilon,s}(x)
 :=s\int_0^1\abs{(1-t)w_\varepsilon(x)+tu_\varepsilon(x)}^{s-1}\,dt
 \,\ \ \text{in}\ \ \R^N.
\]
Applying the integral mean value formula,
we then derive from \eqref{mainequation} and \eqref{eq-Lk} that
\begin{equation}
 \label{eq-difference-equation}
 L_kz_\varepsilon=M_{\varepsilon}z_\varepsilon  \,\ \ \text{in}\ \ \R^N, 
\end{equation}
where
\begin{equation}
 \label{eq-difference-multiplier}
 M_{\varepsilon}(x)
 :=\mathfrak a_{\varepsilon,p}(x)-p\abs{Q_k(x)}^{p-1}
 -\varepsilon\mathfrak a_{\varepsilon,q}(x)
 \,\ \ \text{in}\ \ \R^N.
\end{equation}
Note from \eqref{eq-local-neighborhood} that for any $x\in\R^N$ and $t\in[0,1]$, 
\[
 \begin{aligned}
  \abs{(1-t)w_\varepsilon(x)+tu_\varepsilon(x)}&\le B_k,
 \end{aligned}
\]
\[
  \begin{aligned}
  \abs{(1-t)w_\varepsilon(x)+tu_\varepsilon(x)-Q_k(x)}<\eta_k.
 \end{aligned}
\]
The definitions of $\mathfrak a_{\varepsilon,s}(x)$ 
and $\omega_k(\rho)$ then yield that
\[
 \begin{aligned}
  \norm{\mathfrak a_{\varepsilon,p}-p\abs{Q_k}^{p-1}}_{L^\infty(\R^N)}
  \le\omega_k(\eta_k),\ \
  \norm{\mathfrak a_{\varepsilon,q}}_{L^\infty(\R^N)}
  \le qB_k^{q-1}.
 \end{aligned}
\]
We thus deduce from \eqref{eq-difference-multiplier} that
\begin{equation}
 \label{eq-multiplier-small}
 \norm{M_{\varepsilon}}_{L^\infty(\R^N)}
 \le\omega_k(\eta_k)+qB_k^{q-1}\varepsilon.
\end{equation}

We now get that for any $M\in L^\infty(\R^N)$ and
$\phi\in H^1(\R^N)$, 
\begin{equation}
 \label{eq-multiplier-Hminus1}
 \norm{M\phi}_{H^{-1}(\R^N)}
 \le\norm{M\phi}_{L^2(\R^N)}
 \le\norm{M}_{L^\infty(\R^N)}\norm{\phi}_{H^1(\R^N)}.
\end{equation}
Since $z_\varepsilon\in\Hrad$, we derive from
\eqref{eq-Lk-inverse}, \eqref{eq-difference-equation}
and \eqref{eq-multiplier-Hminus1} that
\[
 \begin{aligned}
  \norm{z_\varepsilon}_{H^1(\R^N)}
  &\le A_k\norm{M_{\varepsilon}z_\varepsilon}_{H^{-1}(\R^N)}\\
  &\le A_k\norm{M_{\varepsilon}}_{L^\infty(\R^N)}
  \norm{z_\varepsilon}_{H^1(\R^N)}\\
  &\le\frac12\norm{z_\varepsilon}_{H^1(\R^N)},
 \end{aligned}
\]
where the last inequality follows from
\eqref{eq-local-smallness} and \eqref{eq-multiplier-small}.
We thus obtain that $z_\varepsilon(x) \equiv 0$ in $\R^N$, and hence $u_\varepsilon \equiv w_\varepsilon$ in $\R^N$. 
This completes the proof of
Proposition~\ref{prop-local-uniqueness}.
\end{proof}

We are next ready to prove Theorem~\ref{maintheorem} as follows. 

\begin{proof}[Proof of Theorem~\ref{maintheorem}]
For each integer $k\ge1$, it follows from
\cite[Theorem~1.1]{CortazarGarciaHuidobroYarur2013}
that $\mathcal B_{\varepsilon,k}\ne\varnothing$ holds
for any $0<\varepsilon<\varepsilon_*$.
Applying Proposition~\ref{prop-local-uniqueness},
we deduce from \eqref{eq-uniform-convergence} that 
there exists an $\varepsilon_k\in(0,\varepsilon_*)$ 
such that for any $0<\varepsilon<\varepsilon_k$, $\mathcal B_{\varepsilon,k}$
contains exactly one element.
This proves the uniqueness, up to translation
and reflection, of $k$-node bound states for \eqref{mainequation}, 
which therefore completes the proof of
Theorem~\ref{maintheorem}.
\end{proof}

Finally, the following proposition shows that 
for each integer $k\ge1$, the unique bound state
$u_\varepsilon\in\mathcal B_{\varepsilon,k}$
of \eqref{mainequation}
is nondegenerate in $\Hrad$
for any sufficiently small $\varepsilon>0$.
Define the radial linearized operator
$\mathcal L_{\varepsilon,k}:\Hrad\to\Hmrad$
around $u_\varepsilon$ by
\begin{equation}
 \label{eq-perturbed-linearized}
 \mathcal L_{\varepsilon,k}
 :=-\Delta+1-p\abs{u_\varepsilon(x)}^{p-1}
 +\varepsilon q\abs{u_\varepsilon(x)}^{q-1}
  \,\ \ \text{in}\ \, \R^N.
\end{equation}

\begin{prop}
 \label{cor-perturbed-nondegeneracy}
 Suppose $p,q>1$ satisfy \eqref{eq-range}.
 Then for any sufficiently small $\varepsilon>0$, 
 the bound state $u_\varepsilon\in\mathcal B_{\varepsilon,k}$
 of \eqref{mainequation} is nondegenerate in $\Hrad$,
 in the sense that
 \begin{equation}
  \label{eq-u-kernel}
  \ker_{\Hrad}\mathcal L_{\varepsilon,k}=\{0\}.
 \end{equation}
\end{prop}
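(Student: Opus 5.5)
The plan is to treat $\mathcal L_{\varepsilon,k}$ as a small perturbation of $L_k$ in operator norm from $\Hrad$ to $\Hmrad$, and then use the inverse estimate of Lemma~\ref{lem-Lk-inverse}.

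Fix $k\ge1$. For $0<\varepsilon<\varepsilon_k$, let $u_\varepsilon$ be the unique element of $\mathcal B_{\varepsilon,k}$ given by Theorem~\ref{maintheorem}. Write
\[
 \mathcal L_{\varepsilon,k}=L_k+W_\varepsilon,\qquad
 W_\varepsilon(x):=p\abs{Q_k(x)}^{p-1}-p\abs{u_\varepsilon(x)}^{p-1}+\varepsilon q\abs{u_\varepsilon(x)}^{q-1}.
\]
By Theorem~\ref{thm-main}, $\norm{u_\varepsilon-Q_k}_{L^\infty(\R^N)}\to0$ as $\varepsilon\searrow0$. In particular, for small $\varepsilon$ we have $\norm{u_\varepsilon}_{L^\infty}\le B_k:=\norm{Q_k}_{L^\infty}+1$.

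Next we bound $W_\varepsilon$ uniformly. Let $\omega_k$ be the modulus of continuity of $t\mapsto p\abs{t}^{p-1}$ on $[-B_k,B_k]$, as in the proof of Proposition~\ref{prop-local-uniqueness}. Then
\[
 \norm{W_\varepsilon}_{L^\infty(\R^N)}\le\omega_k\bigl(\norm{u_\varepsilon-Q_k}_{L^\infty}\bigr)+\varepsilon qB_k^{q-1}\to0 \quad\text{as }\varepsilon\searrow0.
\]
So we can choose $\varepsilon$ small enough that $A_k\norm{W_\varepsilon}_{L^\infty}\le\frac12$, where $A_k$ is the constant of Lemma~\ref{lem-Lk-inverse}.

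Now suppose $\phi\in\Hrad$ satisfies $\mathcal L_{\varepsilon,k}\phi=0$. Then $L_k\phi=-W_\varepsilon\phi$. Lemma~\ref{lem-Lk-inverse} together with the estimate \eqref{eq-multiplier-Hminus1} gives
\[
 \norm{\phi}_{H^1}\le A_k\norm{W_\varepsilon\phi}_{H^{-1}}\le A_k\norm{W_\varepsilon}_{L^\infty}\norm{\phi}_{H^1}\le\tfrac12\norm{\phi}_{H^1}.
\]
Hence $\phi\equiv0$, which is \eqref{eq-u-kernel}.

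Two points need care. First, the estimate must be uniform over $\mathcal B_{\varepsilon,k}$; this holds because the convergence in Theorem~\ref{thm-main} is stated with a supremum over $\mathcal B_{\varepsilon,k}$, and in any case uniqueness makes $\mathcal B_{\varepsilon,k}$ a single element. Second, $\mathcal L_{\varepsilon,k}\phi$ must make sense in $\Hmrad$; this is fine because $W_\varepsilon\in L^\infty(\R^N)$ is radial.

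An alternative route is an ODE argument: write any radial kernel element as a multiple of $\partial_\gamma U_\varepsilon$ and show it blows up at infinity. That route would need Tang-type control for the perturbed equation, which is why I use the perturbative argument above. The argument is essentially the same as in Proposition~\ref{prop-local-uniqueness}, and I do not expect a serious obstacle.
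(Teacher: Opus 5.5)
Your proposal is correct and follows essentially the same argument as the paper. The paper writes the kernel equation as $L_k\phi=B_{\varepsilon,k}\phi$ with $B_{\varepsilon,k}=-W_\varepsilon$. It shows $\norm{B_{\varepsilon,k}}_{L^\infty(\R^N)}\to0$ using the uniform continuity of $t\mapsto\abs{t}^{p-1}$ and the uniform $L^\infty$ bounds and convergence of Theorem~\ref{thm-main}, and then closes with Lemma~\ref{lem-Lk-inverse} and \eqref{eq-multiplier-Hminus1}, exactly as you do.
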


\begin{proof}
For each integer $k\ge1$, suppose $\phi\in\Hrad$
satisfies $\mathcal L_{\varepsilon,k}\phi=0$ in $\R^N$.
It then follows from \eqref{eq-Lk}
and \eqref{eq-perturbed-linearized} that
\begin{equation}
 \label{eq-perturbed-kernel-equation}
 L_k\phi=B_{\varepsilon,k}\phi \,\ \ \text{in}\ \ \R^N, 
\end{equation}
where
\[
 B_{\varepsilon,k}(x)
 :=p\left[
 \abs{u_\varepsilon(x)}^{p-1}
 -\abs{Q_k(x)}^{p-1}
 \right]
 -\varepsilon q\abs{u_\varepsilon(x)}^{q-1}
 \,\ \ \text{in}\ \ \R^N.
\]
Note from \eqref{eq-uniform-bounds}
and \eqref{eq-uniform-convergence} that for any sufficiently small $\varepsilon>0$,
\[
 \norm{u_\varepsilon}_{L^\infty(\R^N)}\le C_k,
 \ \ \norm{Q_k}_{L^\infty(\R^N)}\le C_k, 
\]
where $C_k>0$ is independent of $\varepsilon>0$.
Using the uniform continuity of
$t\mapsto\abs{t}^{p-1}$ on $[-C_k,C_k]$,
we then deduce from \eqref{eq-uniform-convergence} that
\begin{equation}
 \label{eq-perturbation-small}
 \begin{aligned}
  \norm{B_{\varepsilon,k}}_{L^\infty(\R^N)}
  &\le p\norm{
  \abs{u_\varepsilon}^{p-1}
  -\abs{Q_k}^{p-1}
  }_{L^\infty(\R^N)}
  +\varepsilon qC_k^{q-1}
  \to0
  \ \ \text{as}\ \ \varepsilon\searrow0.
 \end{aligned}
\end{equation}

Applying \eqref{eq-Lk-inverse} to $\phi\in\Hrad$,
we then obtain from \eqref{eq-multiplier-Hminus1}, \eqref{eq-perturbed-kernel-equation}
and \eqref{eq-perturbation-small} that
for any sufficiently small $\varepsilon>0$,
\[
 \begin{aligned}
  \norm{\phi}_{H^1(\R^N)}
  &\le A_k
  \norm{B_{\varepsilon,k}\phi}_{H^{-1}(\R^N)}\\
  &\le A_k
  \norm{B_{\varepsilon,k}}_{L^\infty(\R^N)}
  \norm{\phi}_{H^1(\R^N)}
  \le\frac12\norm{\phi}_{H^1(\R^N)}. 
 \end{aligned}
\]
We thus have $\phi \equiv 0$ in $\R^N$, which proves
\eqref{eq-u-kernel}. This therefore completes the proof of
Proposition~\ref{cor-perturbed-nondegeneracy}.
\end{proof}

\end{document}